\theoremstyle{plain} 
\newtheorem{theorem}{Theorem}[section]
\newtheorem{lemma}{Lemma}[section]
\newtheorem{proposition}{Proposition}[section]
\theoremstyle{definition}
\theoremstyle{remark}
\newtheorem{remark}{Remark}[section]
\DeclarePairedDelimiter{\abs}{\lvert}{\rvert} 
\DeclarePairedDelimiter{\norm}{\lVert}{\rVert}  
\DeclareMathOperator{\divergenza}{div}
\renewcommand{\div}{\divergenza}
\DeclareMathOperator{\realpart}{Re}
\renewcommand{\Re}{\realpart}
\DeclareMathOperator{\imaginarypart}{Im}
\renewcommand{\Im}{\imaginarypart}
\DeclareMathOperator{\sgn}{sgn}
\DeclareMathOperator{\supp}{supp}
\newcommand{\R}{\mathbb{R}}
\newcommand{\C}{\mathbb{C}}
\newcommand{\normeq}[1]{{\left\vert\kern-0.25ex\left\vert\kern-0.25ex\left\vert #1 
    \right\vert\kern-0.25ex\right\vert\kern-0.25ex\right\vert}}
\newenvironment{system}%
{\left\lbrace\begin{array}{r@{\hspace{1mm}}ll}}%
{\end{array}\right.}
\definecolor{DarkGreen}{rgb}{0,0.5,0.1} 
\newcommand\soutD{\bgroup\markoverwith
{\textcolor{DarkGreen}{\rule[.5ex]{2pt}{1pt}}}\ULon}
\title{\textbf{Absence of eigenvalues of non-self-adjoint Robin Laplacians 
on the half-space}}
\author[1]{Lucrezia Cossetti} 
\author[2]{David Krej\v{c}i\v{r}\'ik}
\affil[1,2]{Department of Mathematics, Faculty of Nuclear Sciences and Physical Engineering, Czech Technical University in Prague, Trojanova 13, 12000 Prague 2, Czechia; cosseluc@fjfi.cvut.cz, david.krejcirik@fjfi.cvut.cz}
\affil[1]{Current affiliation: Department of Mathematics, Institute for Analysis, Karlsruhe Institute of Technology, Englerstra{\ss}e 2, 76131 Karlsruhe, Germany; lucrezia.cossetti@kit.edu}
\begin{document}

\date{\small 22 November 2019}

\maketitle

\begin{abstract}
\noindent
By developing the method of multipliers, we establish sufficient conditions 
which guarantee the total absence of eigenvalues of the Laplacian in the half-space,
subject to variable complex Robin boundary conditions. 
As a further application of this technique, 
uniform resolvent estimates are derived under the same assumptions on the potential.     
Some of the results are new even in the self-adjoint setting,
where we obtain quantum-mechanically natural conditions.
\end{abstract}

\footnotetext{\emph{Keywords}. Robin Laplacian, half-space, 
non-self-adjoint boundary conditions, absence of eigenvalues,
method of multipliers, spectral stability, resolvent estimate.}

\footnotetext{\emph{2010 Mathematics Subject Classification}. 
35J05, 35P15, 47A10, 81Q12.}

\section{Introduction}
%
Proving the absence of eigenvalues 
of self-adjoint Schr\"odinger operators is intimately related 
to scattering theory in quantum mechanics and constitutes 
a by now classical research field of mathematical physics.
Recent years have brought new motivations for considering
non-self-adjoint operators in modern physics (including quantum mechanics)
and a lot of attention has been focused on  
Schr\"odinger operator with possibly complex potentials
which attracted little attention earlier.

An important exception from this state of the art is 
the 1966 Kato's paper \cite{Kato_1966}, where an abstract
method based on the stationary scattering theory
was developed for proving, among other things,
the (total) absence of eigenvalues of Schr\"odinger operators 
in three and higher dimensions with suitably \emph{small} complex potentials. 
For \emph{discrete eigenvalues} a related 
(and stronger in an $L^p$ scale) result was obtained
by Frank in 2011 \cite{Frank_2011}
(see also \cite{Frank-Simon_2017} for the inclusion of embedded eigenvalues)
with help of the Birman-Schwinger principle
and uniform Sobolev inequalities obtained in
\cite{Kenig-Ruiz-Sogge_1987}.
Yet another approach based on the method of multipliers
\emph{\`a la} Morawetz~\cite{Morawetz_1968} was developed in~\cite{FKV},
where Fanelli, Vega and the second author went
beyond the smallness restriction and included possibly large
repulsive potentials, too.  
Moreover, in~\cite{FKV2} the two-dimensional 
electromagnetic Schr\"odinger operators has been covered by the same authors. 
We also mention~\cite{Cossetti_2017}
where an adaptation of this method to an elasticity setting
was performed by the first author. 

The origin of the present work was to continue with the research 
of \cite{FKV,FKV2,Cossetti_2017} by investigating the robustness
of the method of multipliers in the setting of Schr\"odinger operators
acting in \emph{subdomains} of the Euclidean space.
It turns out that the generalisation is not obvious
due to the presence of boundaries,
at least in the case of complex potentials.
Nonetheless, in this paper we show how to adapt the method
in the special setting of the half-space
and obtain physically relevant conditions of the same nature as above.
In order to highlight the role of boundaries, 
we decided to consider the field-free Schr\"odinger operator,
but subject to general complex Robin boundary conditions.
In this way, we are concerned with a sort of complex potential 
supported by a hyperplane of the space.

To state our main results,  
let us consider the upper half-space
\begin{equation}\label{upper}
  \Omega := \R^{n-1} \times (0,\infty)
\end{equation} 
with any $n \geq 1$,
so that the boundary~$\partial\Omega$ can be identified 
with the lower-dimensional Euclidean space~$\R^{n-1}$.
Given an arbitrary complex-valued function
$\alpha: \partial\Omega \to \C$ 
such that $\alpha \in L^\infty(\partial \Omega)$,
let $-\Delta_\alpha^\Omega$ be the m-sectorial operator 
in the Hilbert space $L^2(\Omega)$ associated with
the closed form
\begin{equation}\label{form}
  h_\alpha[u] := \int_\Omega |\nabla u|^2 
  + \int_{\partial\Omega} \alpha \, |u|^2
  \,, \qquad
  u \in \mathcal{D}(h_\alpha) := H^1(\Omega)
  \,.
\end{equation} 
It can be shown (see Appendix~\ref{Appendix:rigorous_definition})
that $-\Delta_\alpha^\Omega$ acts as the Laplacian in~$\Omega$
and satisfies the boundary condition 
$$
  -u_{x_n} + \alpha \;\! u = 0
  \qquad \mbox{on} \qquad 
  \partial\Omega
$$
in the sense of traces,
where $u_{x_{n}}$ denotes the first partial derivative 
with respect to the $n$th variable in~$\Omega$.
We consistently write $x=(x',x_n)\in \R^{n-1}\times \R$
with $x' := (x_1,\dots,x_{n-1})$
for a generic point~$x$ of~$\R^n$.

In the self-adjoint setting, we prove the following robust result.
\begin{theorem}\label{thm:self-adjoint}
	Let $n\geq 1$ and and assume that 
$\alpha\in W^{1,\infty}(\partial \Omega;\R)$ is such that 
	\begin{equation}\label{hp:self-adjoint}
		\alpha \geq 0,
	\end{equation} 
and
\begin{equation}\label{hp:repulsivity} 
	x\cdot \nabla \alpha \leq 0.
\end{equation}
Then $\sigma_\mathrm{p}(-\Delta_\alpha^{\Omega})=\varnothing$.
\end{theorem}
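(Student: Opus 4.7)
The plan is to adapt the Morawetz-type method of multipliers developed in \cite{FKV,FKV2} to the half-space geometry. Given a hypothetical eigenfunction $u$ of $-\Delta_\alpha^\Omega$ with eigenvalue $\lambda$, I would test the equation $-\Delta u = \lambda u$ against the classical Rellich multiplier $\overline{x\cdot\nabla u}$ and take the real part, producing a Rellich--Pohozaev identity. The crucial geometric feature of~\eqref{upper} is that the outward unit normal is $\nu=-e_n$, so $x\cdot\nu=-x_n$ vanishes identically on $\partial\Omega$; this kills the $\int_{\partial\Omega}(x\cdot\nu)|\nabla u|^2$ boundary term that would normally obstruct the Rellich identity on a general domain. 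The only surviving boundary contribution factors through the Robin condition $u_{x_n}=\alpha u$ and reduces to the tangential integral $2\Re\int_{\partial\Omega}\alpha\,u\,\overline{x'\cdot\nabla' u}$ on $\partial\Omega\cong\R^{n-1}$.

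After substituting the form relation $\lambda\|u\|^2 = h_\alpha[u]$ to eliminate $\lambda$, and then exploiting that $\alpha$ is real to rewrite $2\Re(\alpha u\,\overline{x'\cdot\nabla' u})=\alpha\, x'\cdot\nabla'|u|^2$, a second (tangential) integration by parts on $\R^{n-1}$ -- where the hypothesis $\alpha\in W^{1,\infty}(\partial\Omega;\R)$ enters -- yields the clean, $\lambda$-independent identity
\begin{equation*}
2\int_\Omega |\nabla u|^2 \,+\, \int_{\partial\Omega} \alpha\,|u|^2 \,-\, \int_{\partial\Omega} (x\cdot\nabla\alpha)\,|u|^2 \,=\, 0.
\end{equation*}
Under hypotheses~\eqref{hp:self-adjoint} and~\eqref{hp:repulsivity} each of the three summands is separately nonnegative, so all three must vanish. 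In particular $\nabla u\equiv 0$ on $\Omega$, and combined with $u\in L^2(\Omega)$ on a set of infinite measure this forces $u\equiv 0$. The proof is dimension-uniform: for $n=1$ the boundary integrals are merely point evaluations at the origin and the tangential term disappears automatically.

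The principal obstacle is not the algebraic identity but its \emph{rigorous} derivation. An eigenfunction lies a priori only in $H^1(\Omega)$, so the multiplier $x\cdot\nabla u$ need not be square integrable and the trace of $x'\cdot\nabla' u$ on $\partial\Omega$ is not immediately defined; moreover, any eigenvalue will be embedded in the essential spectrum $[0,\infty)$, so no Agmon-type exponential decay of $u$ is available to tame the far-field. I would first lift $u$ to $H^2_{\mathrm{loc}}(\overline\Omega)$ via boundary elliptic regularity (available since $\alpha\in W^{1,\infty}$), then perform the full computation against the truncated multiplier $\chi_R(x)\,x\cdot\nabla u$ with $\chi_R(x)=\chi(x/R)$ a smooth radial cutoff, and finally send $R\to\infty$. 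Controlling the resulting annular error terms -- a virial/Hardy-type estimate in the spirit of~\cite{FKV}, using only the $L^2$-integrability of $u$ and the $L^\infty$-bound on $\alpha$ -- is the technical heart of the argument.
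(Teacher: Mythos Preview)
Your proposal is correct and follows essentially the same route as the paper: the Rellich multiplier $x\cdot\nabla u$ combined with the form relation $\lambda\|u\|^2=h_\alpha[u]$, the observation that $x\cdot\nu=0$ on $\partial\Omega$ kills the bad boundary term, and a radial cutoff $\chi_R$ to justify the manipulations (the paper's Lemma~\ref{lemma:fg} is precisely your ``annular error'' control). The only cosmetic difference is that the paper splits into the cases $\lambda\ge 0$ and $\lambda<0$ and keeps the $\lambda\int_\Omega|u|^2$ term explicit in the virial identity~\eqref{virial}, whereas you substitute the form relation immediately to obtain a single $\lambda$-independent identity; your packaging is slightly slicker, but the paper's case split makes transparent that~\eqref{hp:repulsivity} alone excludes embedded eigenvalues while~\eqref{hp:self-adjoint} alone excludes negative ones.
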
   

Here $\sigma_\mathrm{p}(H)$ denotes the point spectrum of a closed operator~$H$,
that is, the set of eigenvalues of~$H$.
We underline that the result therefore
guarantees the \emph{total} absence of eigenvalues. 
More specifically, hypothesis~\eqref{hp:self-adjoint} is an elementary way 
how to exclude negative (discrete) eigenvalues 
(in fact any negative spectrum of $-\Delta_\alpha^\Omega$), 
while the non-trivial absence of non-negative (embedded) eigenvalues 
is guaranteed by the repulsivity condition~\eqref{hp:repulsivity}. 
The terminology for the classification of eigenvalues in the brackets
is due to the fact that,
under the hypotheses~\eqref{hp:self-adjoint} and~\eqref{hp:repulsivity},
one has 
\begin{equation}\label{spec}
  \sigma(-\Delta_\alpha^\Omega)=[0,+\infty) .
\end{equation}
We also remark that the spectrum is purely continuous
since, in addition to the absence of eigenvalues,
the residual spectrum is always empty in the self-adjoint case. 
Hence, the spectrum shares the same properties 
with the spectrum of the unperturbed situation $\alpha=0$
corresponding to the Neumann Laplacian in the half-space.

\begin{remark}
For $n=1$, the Robin problem is reduced to the half-line $\Omega=(0,\infty).$ In this situation the absence of eigenvalues is obtained just by requiring~\eqref{hp:self-adjoint} and this condition is known to be also necessary.  
\end{remark}  

If $n \geq 2$, even the self-adjoint setting of Theorem~\ref{thm:self-adjoint} 
seems to be new.
In~\cite{Beltita_1998} Beltita applied the Mourre theory 
(which is closely related to the technique 
behind the proof of our Theorem~\ref{thm:self-adjoint},
see Remark~\ref{Rem.Mourre})
to self-adjoint Schr\"odinger operators on the half-space, 
subject to Robin boundary conditions,
and obtained results about the nature of the essential spectrum
(location of possible accumulations of eigenvalues, 
absence of singularly continuous spectrum),
it is true, but the approach did not allow her to guarantee
the (total) absence of eigenvalues. 
Let us also mention the work of Frank~\cite{Frank_2006}, 
where he considered the special case of \emph{periodic}~$\alpha$
and established sufficient conditions which guarantee 
that the spectrum is purely absolutely continuous. 
In view of~\eqref{hp:repulsivity},
our setting is rather complementary to the periodic one.
 
The half-space~\eqref{upper} can be regarded as a degenerate situation
of conical domains intensively studied in recent years.
In~\cite{Ben_Dhia-Fliss-Hazard-Tonnoir_2016}
the authors established an interesting
Rellich-type result which guarantees the absence of  
non-negative eigenvalues of the Laplacian in non-convex domains of the form
\begin{equation}\label{Sophie}
\big\{(x,y)\in \R^{n-1}\times \R \, 
:\  y>- \abs{x}\tan \theta \big\}, \qquad \theta \in (0, \pi/2),
\end{equation}
subject to \emph{no specific boundary conditions}.
We underline that being $\theta>0$ this result does not cover the half-space, in fact on the contrary, without prescribing any specific behavior of the solutions on the boundary, it is easy to construct square-integrable solutions to the eigenvalues problem in a half-space. This fact, compared with our opposite result in Theorem~\ref{thm:self-adjoint}, stresses how spectral properties of such operators are strongly sensible to boundary conditions.
Interesting results have been also obtained for complements of~\eqref{Sophie},  
\emph{i.e.}\ on infinite sectors of~$\R^n.$ 
Among others works, let us quote~\cite{Khalile-Pankrashkin_2018} in which the discrete spectrum of self-adjoint Robin Laplacians is studied in the case of infinite \emph{planar} sectors. More precisely, 
assuming that~$\alpha$ is \emph{real, negative and constant}, 
it is proved that the discrete spectrum is non-empty if and only if the sector is strictly smaller than the half-plane.
 
In order to state our results in the non-self-adjoint setting, 
we need to recall two functional inequalities 
which enter the assumptions below.
First, let $n\geq 3$ and let $f$ and $g$ 
be complex-valued functions defined on~$\R^{n-1}$.
Then the fractional Leibnitz rule
\begin{equation}\label{eq:chain_rule}
\norm{(-\Delta)^{1/4}(f\;\! g)}_{L^2(\R^{n-1})}
\leq 
C \, \big[\norm{f}_{L^{\infty}(\R^{n-1})}\norm{(-\Delta)^{1/4} g}_{L^{2}(\R^{n-1})} 
+ \norm{(-\Delta)^{1/4} f}_{L^{2(n-1)}(\R^{n-1})}\norm{g}_{L^{2^\ast}(\R^{n-1})} \big]
\end{equation}
holds true with $2^\ast:=2(n-1)/(n-2)$
and a positive constant~$C$ depending on the dimension~$n$.
Here the action of the fractional Laplacian is understood
in a standard way via the Fourier transform (see, \emph{e.g.}, \cite[Sec.~7]{LL}) and it is also related to the homogeneous Sobolev space $\dot{H}^{1/2}(\R^{n-1})$ in the following way:
\begin{equation}\label{eq:homog-norm}
	\norm{(-\Delta)^{1/4} f}_{L^2(\R^{n-1})}=\norm{f}_{\dot{H}^{1/2}(\R^{n-1})}^2=\int_{\R^{n-1}} \abs{\xi} \abs{\hat{f}(\xi)}^2\, d\xi.
\end{equation}
Indeed, \eqref{eq:chain_rule}~is a particular case of the homogeneous 
Kato-Ponce inequality~\cite{Kenig-Ponce-Vega_1993}.
We denote by~$C^*$ the smallest constant~$C$ such that~\eqref{eq:chain_rule}   
holds true (unfortunately, the optimal constant does not seem 
to be known explicitly).
Second, the fractional Sobolev embeddings 
$\dot{H}^{1/2}(\R^{n-1})\hookrightarrow L^{2^\ast}(\R^{n-1})$ 
is quantified by the inequality
\begin{equation}\label{eq:Sobolev_inequality}
  \norm{f}_{L^{2^\ast}(\R^{n-1})}
  \leq S \, \norm{(-\Delta)^{1/4} f}_{L^2(\R^{n-1})}
  ,
\end{equation}
where~$S$ is a positive constant.
We denote by~$S^*$ the smallest constant~$S$ 
such that~\eqref{eq:Sobolev_inequality} holds true  
(in this case, the optimal constants is known explicitly,
see \cite[Thm.~8.4]{LL}).

\begin{theorem}\label{thm:non_self-adjoint}
Let $n\geq 3$ and assume that $\alpha\in L^{\infty}(\partial \Omega;\C)$ 
is such that  
	\begin{equation}\label{hp:non_self-adjoint} 
		\Re \alpha \geq \abs{\Im\alpha}.
	\end{equation}
If there exist non-negative numbers $b_1,b_2$ satisfying
	\begin{equation}\label{eq:smallness}
		2C^\ast \left[ b_1+S^\ast b_2 \right]<1
	\end{equation}
such that
\begin{equation}\label{eq:alpha_condition}
	\norm{x\, \alpha}_{L^\infty(\partial \Omega)}\leq b_1
\end{equation}
and 
\begin{equation}\label{eq:fractional_derivative}
	\norm{(-\Delta)^{1/4}(x\, \alpha)}_{L^{2(n-1)}(\partial \Omega)}\leq b_2,
\end{equation}	
	then $\sigma_\mathrm{p}(-\Delta_\alpha^\Omega)=\varnothing.$
\end{theorem}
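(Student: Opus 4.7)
The plan is to adapt the method of multipliers of \cite{FKV} to the half-space, with the main novelty being the treatment of the Robin boundary. Assume there exists an eigenfunction $u\in H^1(\Omega)$ of $-\Delta_\alpha^\Omega$ with eigenvalue $\lambda\in\C$; elliptic regularity up to the boundary justifies all manipulations below, and standard Agmon-type decay estimates ensure convergence of the weighted integrals. I would then derive, in order:

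\textbf{Step 1 (form identity).} Testing the eigenvalue equation against $\bar u$ and using the Robin condition yields
\begin{align*}
\norm{\nabla u}_{L^2(\Omega)}^2+\int_{\partial\Omega}\Re\alpha\,\abs{u}^2 &=\Re\lambda\,\norm{u}_{L^2(\Omega)}^2,\\
\int_{\partial\Omega}\Im\alpha\,\abs{u}^2 &=\Im\lambda\,\norm{u}_{L^2(\Omega)}^2.
\end{align*}
Hypothesis \eqref{hp:non_self-adjoint} gives $\Re\alpha\geq 0$, whence $\Re\lambda\geq 0$; the case $\lambda=0$ is disposed of directly (forcing $\nabla u\equiv 0$ and hence $u\equiv 0$), so we may assume $\lambda\neq 0$ from here on.

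\textbf{Step 2 (Rellich--Morawetz identity).} Apply the multiplier $Mu:=x\cdot\nabla\bar u+\tfrac{n-1}{2}\bar u$ and take the real part of $\int_\Omega(-\Delta u-\lambda u)\,\overline{Mu}=0$. The geometry of the half-space gives the crucial simplification $x\cdot\nu=-x_n\equiv 0$ on $\partial\Omega$, so the radial boundary term from the bulk integration by parts drops entirely, and only the tangential part $x'\cdot\nabla_{x'}u|_{\partial\Omega}$ survives. The Robin condition replaces the normal derivative by $\alpha u$, producing a boundary contribution of the schematic form
\begin{equation*}
\Re\int_{\partial\Omega}\alpha u\,\overline{(x'\cdot\nabla_{x'}u)}+\tfrac{n-1}{2}\int_{\partial\Omega}\Re\alpha\,\abs{u}^2.
\end{equation*}
A further integration by parts along $\partial\Omega\simeq\R^{n-1}$ transfers the tangential derivative from $u$ onto $\alpha$, and upon combining with the identities of Step 1 to eliminate the $\int_{\partial\Omega}\Re\alpha|u|^2$ contribution, one arrives at an equality of the form
\begin{equation*}
\norm{\nabla u}_{L^2(\Omega)}^2=\mathcal B[x\alpha,u],
\end{equation*}
where $\mathcal B[x\alpha,u]$ is a boundary bilinear form in $u|_{\partial\Omega}$ and $\nabla_{x'}u|_{\partial\Omega}$ whose coefficient is $x\alpha$, not $\alpha$ alone.

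\textbf{Step 3 (fractional estimates).} By Cauchy--Schwarz in the $\dot H^{\pm 1/2}(\R^{n-1})$ duality and \eqref{eq:homog-norm},
\begin{equation*}
\bigl|\mathcal B[x\alpha,u]\bigr|\leq 2\,\norm{(-\Delta)^{1/4}\bigl((x\alpha)u\bigr)}_{L^2(\partial\Omega)}\,\norm{(-\Delta)^{1/4}u}_{L^2(\partial\Omega)}.
\end{equation*}
The Kato--Ponce inequality \eqref{eq:chain_rule} together with \eqref{eq:alpha_condition}--\eqref{eq:fractional_derivative} gives
\begin{equation*}
\norm{(-\Delta)^{1/4}\bigl((x\alpha)u\bigr)}_{L^2}\leq C^\ast\bigl[b_1\,\norm{(-\Delta)^{1/4}u}_{L^2}+b_2\,\norm{u}_{L^{2^\ast}(\partial\Omega)}\bigr],
\end{equation*}
and the Sobolev embedding \eqref{eq:Sobolev_inequality} yields $\norm{u}_{L^{2^\ast}}\leq S^\ast\norm{(-\Delta)^{1/4}u}_{L^2}$, whence
\begin{equation*}
\bigl|\mathcal B[x\alpha,u]\bigr|\leq 2C^\ast(b_1+S^\ast b_2)\,\norm{(-\Delta)^{1/4}u}_{L^2(\partial\Omega)}^2.
\end{equation*}

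\textbf{Step 4 (conclusion).} The sharp half-space trace inequality $\norm{(-\Delta)^{1/4}u}_{L^2(\partial\Omega)}^2\leq\norm{\nabla u}_{L^2(\Omega)}^2$ combined with the estimate above and the identity of Step 2 yields
\begin{equation*}
\norm{\nabla u}_{L^2(\Omega)}^2\leq 2C^\ast(b_1+S^\ast b_2)\,\norm{\nabla u}_{L^2(\Omega)}^2,
\end{equation*}
which by \eqref{eq:smallness} forces $\nabla u\equiv 0$, and hence $u\equiv 0$ since $u\in L^2(\Omega)$.

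The principal difficulty is Step 2, namely producing a boundary term whose coefficient is exactly $x\alpha$. The expression $\Re\int_{\partial\Omega}\alpha u(x'\cdot\nabla_{x'}\bar u)$ splits into a real part involving $x'\cdot\nabla_{x'}\abs{u}^2$, which integrates by parts to $-\int (x'\cdot\nabla_{x'}\Re\alpha)\abs{u}^2-(n-1)\int\Re\alpha\abs{u}^2$, and an imaginary part $(\Im\alpha)\Im[u(x'\cdot\nabla_{x'}\bar u)]$ with no definite sign --- this is precisely the obstruction absent in the self-adjoint Theorem~\ref{thm:self-adjoint}. Hypothesis \eqref{hp:non_self-adjoint} is used to absorb the terms proportional to $\Re\alpha\abs{u}^2$ via the form identity, while the smallness of $x\alpha$ encoded in \eqref{eq:alpha_condition}--\eqref{eq:fractional_derivative} is exactly what is needed to dominate the bad imaginary contribution and close the estimate.
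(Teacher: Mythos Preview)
Your Steps~1, 3 and~4 are essentially the paper's argument: the form identities, the Kato--Ponce/Sobolev estimate of the boundary bilinear form, and the sharp trace inequality $\|u(0)\|_{\dot H^{1/2}(\partial\Omega)}\le\|\nabla u\|_{L^2(\Omega)}$ all appear verbatim. The gap is in Step~2.

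When you test against the Morawetz multiplier and take real parts, the eigenvalue term produces, in addition to what you wrote, the bulk contribution
\[
-2\,\Im\lambda\,\Im\int_\Omega x\cdot(u\,\nabla\bar u)\,dx
\]
(this is the right-hand side of identity~\eqref{eq:second_Omega}). In the self-adjoint case $\Im\lambda=0$ it vanishes and your scheme closes, which is exactly Theorem~\ref{thm:self-adjoint}. For complex~$\lambda$ it does \emph{not} vanish, it carries an unbounded weight~$|x|$, and it cannot be absorbed by the form identities of Step~1 or bounded by boundary terms. Your claimed equality $\|\nabla u\|_{L^2(\Omega)}^2=\mathcal B[x\alpha,u]$ therefore does not hold when $\Im\lambda\neq 0$.

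The paper repairs this (see Lemma~\ref{crucial_identity} and Remark~\ref{Rem.Eidus}) by introducing the Eidus--Ikebe--Saito auxiliary function
\[
u^-(x)=e^{-i(\Re\lambda)^{1/2}\sgn(\Im\lambda)|x|}\,u(x)
\]
and combining \eqref{eq:second_Omega} with \eqref{eq:first_Omega_const}, \eqref{eq:first_Omega_x} and \eqref{eq:first_Omega_im_x} with carefully chosen $\lambda$-dependent coefficients. The point is that the bad term above recombines with $\int_\Omega|\nabla u|^2$ and $\Re\lambda\int_\Omega|u|^2$ into $\int_\Omega|\nabla u^-|^2$, and one obtains the inequality~\eqref{eq:crucial_identity_f0} with $\nabla u^-$ in place of~$\nabla u$. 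Your Steps~3--4 then apply verbatim to~$u^-$ (note $|u^-|=|u|$ on $\partial\Omega$), yielding $\nabla u^-=0$ and hence $u=0$. This handles only the cone $|\Im\lambda|\le\Re\lambda$; the complementary region $|\Im\lambda|>\Re\lambda$ requires a separate, elementary argument based on \eqref{eq:first_Omega_const}$\,\pm\,$\eqref{eq:first_Omega_im_const} together with~\eqref{hp:non_self-adjoint}, which you have not addressed either.
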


Condition~\eqref{hp:non_self-adjoint} is a non-self-adjoint 
analogue of~\eqref{hp:self-adjoint} ensuring in a simple way
the absence of (discrete) eigenvalues in the exterior of the closed cone
$\mathcal{C} := \{\lambda\in\C \ |\,  \Re\lambda \geq |\Im\lambda|\}$.
On the other hand, as a consequence of a refined application
of the method of multipliers, 
\eqref{eq:alpha_condition} and~\eqref{eq:fractional_derivative}
guarantee that there are no eigenvalues (both discrete or embedded) in~$\mathcal{C}$. 
Note that~\eqref{eq:alpha_condition} implies that
$
  \sigma_\mathrm{ess}(-\Delta_\alpha^\Omega)=[0,+\infty)
$,
where one may take any of the customary definitions 
of the essential spectrum for non-self-adjoint operators
(\emph{cf.}~\cite[Sec.~IX]{Edmunds-Evans}).
At the same time, 
the residual spectrum is always empty as a consequence of the symmetry relation
$(-\Delta_\alpha^\Omega)^* = -\Delta_{-\alpha}^\Omega$.
Consequently, the stability result~\eqref{spec} again holds
and the spectrum is purely continuous.
 
In higher dimensions, we have a more explicit result.  
\begin{theorem}\label{thm:non_self-adjoint_h}
Let $n\geq 4$ and assume that $\alpha\in W^{1,\infty}(\partial \Omega;\C)$
is such that~\eqref{hp:non_self-adjoint}  
	and 
	\begin{equation}\label{hp:repulsivity_h}
		x\cdot \nabla\, \Re \alpha\leq 0.
	\end{equation}
	If there exist non-negative numbers $b_1,b_2$ satisfying
	\begin{equation}\label{eq:smallness_h}
		2\left[ b_1(b_1+ b_2) \right]<1,
	\end{equation}
	such that
\begin{equation}\label{eq:alpha_condition_h}
	\norm{x\, \Im \alpha}_{L^\infty(\partial \Omega)}\leq b_1 
\end{equation}
and 
\begin{equation}\label{eq:Hardy_h}
  \forall \psi\in H^1(\partial\Omega) , \qquad
	 \int_{\partial \Omega} \abs{\div(x' \Im \alpha)}^2 \abs{\psi}^2
  \leq b_2^2 \int_{\partial \Omega} \abs{\nabla \psi}^2 ,  
\end{equation}	
	then $\sigma_\mathrm{p}(-\Delta_\alpha^\Omega)=\varnothing.$
\end{theorem}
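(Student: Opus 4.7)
The plan is to apply the method of multipliers in the spirit of~\cite{FKV} and of Theorem~\ref{thm:self-adjoint}, adapted to the non-self-adjoint half-space setting. Let $u \in H^1(\Omega)$ be a putative eigenfunction with $-\Delta_\alpha^\Omega u = \lambda u$, so $u$ solves $-\Delta u = \lambda u$ in $\Omega$ together with the Robin condition $u_{x_n} = \alpha \, u$ on $\partial\Omega$. Write $\lambda = \lambda_1 + i\lambda_2$ and $\alpha = \alpha_1 + i\alpha_2$. The first step is to test the equation against $\bar u$ and recover the form identity
\begin{equation*}
\int_\Omega |\nabla u|^2 + \int_{\partial\Omega} \alpha |u|^2 = \lambda \int_\Omega |u|^2 .
\end{equation*}
Taking real and imaginary parts and combining them via the sectoriality hypothesis~\eqref{hp:non_self-adjoint} yields $\lambda \in \mathcal{C}$, so the remaining task is to exclude eigenvalues inside the cone.

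The heart of the argument is a Morawetz-type virial identity, obtained by multiplying the eigenvalue equation by $\overline{x\cdot \nabla u + \tfrac{n-1}{2} u}$, integrating over $\Omega$, and taking the real part. After integration by parts, the bulk terms organise into nonnegative quantities controlled by $\int_\Omega |\nabla u|^2$ and $\Re\lambda \int_\Omega |u|^2$; the boundary contribution splits into a real-part piece and an imaginary-part piece. Since on $\partial\Omega$ one has $x_n = 0$ and $u_{x_n} = \alpha u$, the real-part boundary integral reduces, after a further integration by parts in $x'$, to something proportional to $-\int_{\partial\Omega} (x'\cdot \nabla_{x'} \Re\alpha) |u|^2$, which is nonpositive thanks to the repulsivity assumption~\eqref{hp:repulsivity_h} and can therefore be discarded with a favourable sign.

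It remains to control the contribution of $\Im\alpha$. This piece produces two types of boundary terms: (i) a cross term of the form $\int_{\partial\Omega} x'\cdot \Im(\bar u \, \nabla_{x'} u)\, \Im\alpha$, bounded via Cauchy--Schwarz by $b_1$ times a product of $\|u\|$ and $\|\nabla_{x'} u\|$ on $\partial\Omega$; and (ii) after integrating by parts in $x'$, a term $\int_{\partial\Omega} \div(x'\Im\alpha)\, |u|^2$, to which the Hardy-type hypothesis~\eqref{eq:Hardy_h} applies (available precisely because $\partial\Omega \simeq \R^{n-1}$ has dimension $\geq 3$ for $n\geq 4$) and gives a bound by $b_2 \int_{\partial\Omega} |\nabla_{x'} u|^2$. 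Pairing these two estimates through Cauchy--Schwarz produces a factor $2 b_1(b_1+b_2)$, and the trace quantities on $\partial\Omega$ are, by the form identity above, bounded in turn by the bulk energy $\int_\Omega |\nabla u|^2$. The net outcome is an inequality
\begin{equation*}
(\text{positive bulk quantity in } u) \;\leq\; 2\, b_1(b_1 + b_2) \, (\text{same positive bulk quantity in } u) ,
\end{equation*}
which under the smallness assumption~\eqref{eq:smallness_h} forces $u \equiv 0$.

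The main obstacle I foresee is the accurate bookkeeping of the boundary terms produced by the virial multiplier, in particular the rearrangement on $\partial\Omega$ that isolates $\div(x'\Im\alpha)$ in the form required by~\eqref{eq:Hardy_h}. Because the Robin boundary condition couples $\alpha$ to the normal derivative of $u$, every term generated by the multiplier on $\partial\Omega$ must be rewritten so that: the real part of $\alpha$ appears only in the pointwise nonpositive combination handled by~\eqref{hp:repulsivity_h}; the imaginary part appears either through $x\Im\alpha$ bounded by $b_1$ or through $\div(x'\Im\alpha)$ subordinated to the tangential Dirichlet form by $b_2$; and the tangential $H^1$ trace quantities on $\partial\Omega$ that remain are absorbed into $\int_\Omega |\nabla u|^2$ using the form identity. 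Closing this web of absorptions in a way that actually produces the clean constant $2b_1(b_1+b_2)$ (rather than a worse one) is the delicate point on which the theorem rests.
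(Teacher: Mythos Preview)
Your outline has two genuine gaps.

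\textbf{First gap: the $\Im\lambda$ term in the virial identity.} When you test against the Morawetz multiplier $x\cdot\nabla u + \tfrac{n-1}{2}u$ and take the real part, the bulk does \emph{not} organise into nonnegative quantities when $\Im\lambda\neq 0$: identity~\eqref{eq:second_Omega} produces the term $-2\Im\lambda\,\Im\int_\Omega x\cdot u\,\nabla\bar u$, which has no sign and carries an unbounded weight $|x|$. In the self-adjoint proof (Section~\ref{Sec.virial}) this term vanishes, but for complex $\lambda$ in the cone it must be neutralised. The paper's remedy is to introduce the auxiliary function $u^-(x)=e^{-i(\Re\lambda)^{1/2}\sgn(\Im\lambda)|x|}u(x)$ and to combine \eqref{eq:first_Omega_const}--\eqref{eq:second_Omega} in a specific way (Lemma~\ref{crucial_identity_h}) so that all the dangerous $\Im\lambda$ contributions reassemble into $\int_\Omega|\nabla u^-|^2$ plus a term with a good sign (this is where the weighted Hardy inequality~\eqref{eq:weighted_Hardy} and the restriction $n\geq 3$ enter). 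Without this mechanism your virial step does not close; see Remark~\ref{Rem.Eidus}, where the paper explicitly flags $u^-$ as indispensable in the non-self-adjoint case.

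\textbf{Second gap: absorbing the boundary trace terms.} You propose to bound the $\Im\alpha$ boundary contribution by quantities involving $\|u\|_{L^2(\partial\Omega)}$ and $\|\nabla_{x'}u\|_{L^2(\partial\Omega)}$, and then absorb these into $\int_\Omega|\nabla u|^2$ ``using the form identity''. But the form identity gives no control of $\|\nabla_{x'}u\|_{L^2(\partial\Omega)}$ in terms of bulk $H^1$ norms; indeed there is no homogeneous estimate of that type. The paper avoids this by treating the bilinear boundary term $T(f,g)=\int_{\partial\Omega}(x'\Im\alpha\,f)\cdot\nabla_{x'}\bar g$ via \emph{interpolation}: one bound $|T|\leq b_1\|f\|_{L^2}\|g\|_{\dot H^1}$ from~\eqref{eq:alpha_condition_h}, a second bound $|T|\leq(b_1+b_2)\|f\|_{\dot H^1}\|g\|_{L^2}$ obtained after integrating by parts and invoking~\eqref{eq:Hardy_h}, and then interpolation yields $|T|\leq[b_1(b_1+b_2)]^{1/2}\|f\|_{\dot H^{1/2}}\|g\|_{\dot H^{1/2}}$. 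The resulting $\dot H^{1/2}(\partial\Omega)$ norm of $u^-$ is what can be absorbed into $\|\nabla u^-\|_{L^2(\Omega)}$, via the sharp homogeneous trace inequality~\eqref{eq:homogeneous_trace} (the Caffarelli--Silvestre extension). This interpolation step is precisely what manufactures the constant $2[b_1(b_1+b_2)]^{1/2}$ matching~\eqref{eq:smallness_h}; a direct Cauchy--Schwarz argument as you sketch would land you at the $\dot H^1$ trace level, which cannot be closed.
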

\begin{remark}
Formally, the theorem holds also for dimensions $n = 2,3$,
but then the statement reduces to the self-adjoint situation
of Theorem~\ref{thm:self-adjoint}
(under a stronger hypothesis about the regularity of~$\alpha$).
Indeed, the criticality of the Laplacian in dimensions $1,2$
(note that working on the boundary sets us in an $(n-1)$-dimensional context)
implies that condition~\eqref{eq:Hardy_h} can be satisfied only
in the trivial case $\div(x' \Im\alpha)=0$.
But the only admissible solution~$\alpha$ to this equation
is that with $\Im\alpha=0$, in which case 
\eqref{eq:alpha_condition_h} and~\eqref{eq:Hardy_h} 
are trivially satisfied and~\eqref{hp:non_self-adjoint} 
reduces to~\eqref{hp:self-adjoint}.    
On the other hand, hypothesis~\eqref{eq:Hardy_h} is non-void
as long as $n \geq 4$ due to the classical Sobolev embedding 
$\dot{H}^1(\R^{n-1})\hookrightarrow L^{2^*}(\R^{n-1})$
with $2^* := 2(n-1)/(n-3)$.
The latter is quantified by the inequality
\begin{equation}\label{eq:standard-Sobolev}
  \norm{f}_{L^{2^*}(\R^{n-1})}
  \leq \mathcal{S} \, \norm{\nabla f}_{L^2(\R^{n-1})}
\end{equation} 
valid for every $f \in \dot{H}^1(\R^{n-1})$,
where $\mathcal{S}$ is a positive constant
(the optimal constant~$\mathcal{S}^*$ is known explicitly, 
see~\cite[Thm. 8.3]{LL}).
Then a sufficient condition which guarantee~\eqref{eq:Hardy_h} 
is represented by
\begin{equation}\label{eq:L^(n-1)}
  \norm{\div(x' \Im\alpha)}_{L^{n-1}(\R^{n-1})}\leq b ,
\end{equation}
where~$b$ is a positive constant.
Indeed, by H\"older's inequality and~\eqref{eq:standard-Sobolev},
one gets
\begin{equation*}
\begin{split}
\int_{\partial \Omega} \abs{\div(x' \Im \alpha)}^2 \abs{\psi}^2		
&\leq \left( \int_{\partial \Omega} \abs{\div(x' \Im\alpha)}^{(n-1)} \right)^{2/(n-1)} \left(\int_{\partial \Omega} \abs{\psi}^{2(n-1)/(n-3)} \right)^{(n-3)/(n-1)}
\\
&\leq (\mathcal{S}^*b)^2 \int_{\partial \Omega} \abs{\nabla \psi}^2.  
\end{split}
\end{equation*} 
Hence~\eqref{eq:Hardy_h} is valid by choosing $b_2:=\mathcal{S}^* b$.
\end{remark}

In the non-self-adjoint context of 
Theorems~\ref{thm:non_self-adjoint} and~\ref{thm:non_self-adjoint_h},
it is necessary to mention the recent work of Frank~\cite{Frank_2017},
where he also establishes sufficient conditions which guarantee
the absence of eigenvalues of the Robin Laplacian in the half-space
(in fact, more generally, for Schr\"odinger operators on the whole space 
with $\delta$-type potentials supported on a hyperplane,
\emph{cf.}~\cite[Rem.~5]{Frank_2017}).
His approach is based on the Birman-Schwinger principle
as previously applied to Schr\"odinger operators in the whole space
in \cite{Frank_2011,Frank-Simon_2017} 
(see also~\cite[Sec.~2]{FKV}),
which yields the sufficient condition  
\begin{equation}\label{Frank}
  \int_{\partial\Omega} |\alpha|^{n-1} < D_{0,n}^{-1}
  \,, 
\end{equation}
where $D_{0,n}$ is a (non-explicit) positive constant as long as $n \geq 3$. 
This inequality is clearly a smallness-type condition 
of similar nature as our Theorem~\ref{thm:non_self-adjoint}
(obtained by a completely different approach),
but either of the two alternative hypotheses~\eqref{Frank} 
and \eqref{hp:non_self-adjoint}--\eqref{eq:fractional_derivative}
does not seem to imply the other in general.
Moreover, in Theorem~\ref{thm:non_self-adjoint_h} 
we go beyond the smallness of~$|\alpha|$.
Nevertheless, it is remarkable that the $L^{n-1}$-scale of~\eqref{Frank}
appears in our sufficient condition~\eqref{eq:L^(n-1)}, too.

\medskip
As a further application of the technique of multipliers developed to prove 
Theorems~\ref{thm:self-adjoint}, \ref{thm:non_self-adjoint} 
and~\ref{thm:non_self-adjoint_h}, we are also able 
to perform uniform resolvent estimates for $-\Delta_\alpha^\Omega$, 
which represent an analogue of the results obtained  
for electromagnetic Schr\"odinger operators in the whole space
in~\cite{Barcelo-Vega-Zubeldia_2013}.
\begin{theorem}\label{thm:main_resolvent}
Let $n\geq 3$ and assume that~$\alpha$ satisfies 
the hypotheses of Theorem~\ref{thm:non_self-adjoint}.
Then there exists a positive constant~$c$ such that,
for all $\lambda \in \C$,
\begin{equation*}
  \big\|
  r^{-1}(-\Delta_\alpha^\Omega-\lambda)^{-1}r^{-1}
  \big\|_{L^2(\Omega) \to L^2(\Omega)} \leq c
  \,,
\end{equation*}
where $r(x) := |x|.$
\end{theorem}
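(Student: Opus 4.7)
The plan is to revisit the multiplier argument underlying the proof of Theorem~\ref{thm:non_self-adjoint}, this time keeping the source term non-trivial. By a density argument, it suffices to show that every sufficiently regular $u \in \mathcal{D}(-\Delta_\alpha^\Omega)$ solving $(-\Delta_\alpha^\Omega - \lambda)\, u = f$ with $r f \in L^2(\Omega)$ satisfies
\begin{equation*}
  \|r^{-1} u\|_{L^2(\Omega)} \leq c \, \|r f\|_{L^2(\Omega)}
\end{equation*}
uniformly in $\lambda \in \C$; substituting $f = r^{-1} g$ this is equivalent to the asserted resolvent bound. The case $\alpha \equiv 0$ reduces to a Kato--Yajima-type estimate for the free Neumann Laplacian on the half-space, and the proof in the presence of~$\alpha$ extends the same ideas, provided the boundary contributions generated by integration by parts can be absorbed.

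First, test the equation against $\bar u$ and integrate by parts, using the Robin condition $-u_{x_n} + \alpha u = 0$ on~$\partial\Omega$, to obtain the standard form identity
\begin{equation*}
  \int_\Omega |\nabla u|^2 + \int_{\partial\Omega} \alpha \, |u|^2 - \lambda \, \|u\|_{L^2(\Omega)}^2 = \int_\Omega f \, \bar u .
\end{equation*}
Its real and imaginary parts, combined with hypothesis~\eqref{hp:non_self-adjoint}, yield $\lambda$-uniform control of $\|\nabla u\|_{L^2(\Omega)}$ and of the boundary mass $\int_{\partial\Omega} \Re\alpha \, |u|^2$ by the product $\|r f\|_{L^2(\Omega)} \|r^{-1} u\|_{L^2(\Omega)}$, via Cauchy--Schwarz with the weights $r^{\pm 1}$. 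Next, test the equation against the Morawetz-type multiplier $x \cdot \nabla \bar u + \tfrac{n-1}{2} \bar u$, i.e.\ the same vector-field multiplier driving the proof of Theorem~\ref{thm:non_self-adjoint} (suitably regularised to make the computation rigorous), and integrate by parts. The bulk side of the resulting virial identity, after invoking a Hardy inequality in~$\Omega$, dominates a positive weighted norm controlling $\|r^{-1} u\|_{L^2(\Omega)}^2$; its right-hand side contains a cross term $\Re \int_\Omega f \, \overline{(x \cdot \nabla u + \tfrac{n-1}{2} u)}$, bounded by $\|r f\|_{L^2(\Omega)} \|r^{-1} u\|_{L^2(\Omega)}$, together with boundary traces on $\partial\Omega$ involving $(x' \cdot \alpha) \, |u|^2$.

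The boundary traces are estimated exactly as in the proof of Theorem~\ref{thm:non_self-adjoint}: the pointwise bound~\eqref{eq:alpha_condition} and the fractional bound~\eqref{eq:fractional_derivative}, together with the Kato--Ponce inequality~\eqref{eq:chain_rule} and the Sobolev embedding~\eqref{eq:Sobolev_inequality}, control them by $C^\ast(b_1 + S^\ast b_2) \|(-\Delta)^{1/4} u\|_{L^2(\partial\Omega)}^2$; the latter trace norm is in turn dominated by $\|\nabla u\|_{L^2(\Omega)}^2$ through the half-space trace embedding $H^1(\Omega) \hookrightarrow \dot{H}^{1/2}(\partial\Omega)$, and hence, via the form identity above, by $\|r f\|_{L^2(\Omega)} \|r^{-1} u\|_{L^2(\Omega)}$. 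The smallness condition~\eqref{eq:smallness} then permits absorption, leaving an inequality of the shape $\|r^{-1} u\|_{L^2(\Omega)}^2 \leq C \, \|r f\|_{L^2(\Omega)} \, \|r^{-1} u\|_{L^2(\Omega)}$, from which the bound follows.

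The main obstacle is maintaining uniformity in $\lambda$. The $\lambda u$ contribution produced by the Morawetz multiplier has no definite sign, but, exactly as in the proof of Theorem~\ref{thm:non_self-adjoint}, a radial integration by parts converts it into a Hardy-type positive quantity plus a boundary piece which cancels against the imaginary part of the form identity thanks to the cone condition $\Re\alpha \geq |\Im\alpha|$. This structural feature is precisely what allows the resolvent bound to remain uniform over all complex~$\lambda$ and, for $\lambda \in [0, + \infty)$, to be interpreted in the usual limiting-absorption sense.
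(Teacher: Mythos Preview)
Your outline has the right broad ingredients --- the form identity, a Morawetz multiplier, the fractional trace estimates for the $\alpha$-boundary term, Hardy's inequality to pass to $\|r^{-1}u\|$ --- but the mechanism you propose for $\lambda$-uniformity is not the one the paper uses, and as written it does not close.

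The paper (via Theorem~\ref{thm:resolvent}) splits into two regimes. When $|\Im\lambda| > \Re\lambda$, the form identity combined with the $L^2$-bound~\eqref{L^2_bound} and the cone condition~\eqref{hp:non_self-adjoint} does control $\|\nabla u\|$ by $\|rf\|$, essentially as you describe. The difficulty lies inside the cone $|\Im\lambda| \leq \Re\lambda$. There your claim that the real and imaginary parts of the form identity give ``$\lambda$-uniform control of $\|\nabla u\|$'' fails: the term $\Re\lambda\,\|u\|^2$ on the right cannot be absorbed when $\Re\lambda$ is large and $|\Im\lambda|$ is small, and the $\Im\lambda$-term produced by the Morawetz multiplier is not tamed by any ``radial integration by parts''. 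The paper's device --- already present in the proof of Theorem~\ref{thm:non_self-adjoint}, which rests on Lemma~\ref{crucial_identity}, not on the plain multiplier $x\cdot\nabla\bar u + \tfrac{n-1}{2}\bar u$ --- is the phase-conjugated function $u^-(x) = e^{-i(\Re\lambda)^{1/2}\sgn(\Im\lambda)|x|}u(x)$. A specific weighted linear combination of the identities~\eqref{eq:first_Omega_const}--\eqref{eq:second_Omega}, with coefficients involving $(\Re\lambda)^{\pm 1/2}$, is chosen precisely so that all the dangerous $\lambda$-dependent bulk terms recombine, via~\eqref{eq:square_nabla_u^-}, into $\int_\Omega|\nabla u^-|^2$ plus a remainder handled by the weighted Hardy inequality~\eqref{eq:weighted_Hardy}. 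The boundary trace is then estimated exactly as you indicate, but for $u^-$ rather than $u$, yielding $\|\nabla u^-\| \leq c\,\|rf\|$; Theorem~\ref{thm:main_resolvent} follows from $|u| = |u^-|$ and Hardy. Without the $u^-$ trick (cf.\ Remark~\ref{Rem.Eidus}), the argument collapses for $\lambda$ near the positive real axis.
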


Actually, Theorem~\ref{thm:main_resolvent} is a direct consequence
of the following stronger result, 
which shows that \emph{a priori} estimates for solutions 
to the resolvent equation hold.

\begin{theorem}\label{thm:resolvent}
Let $n\geq 3$ and assume that~$\alpha$ satisfies 
the hypotheses of Theorem~\ref{thm:non_self-adjoint}.
There exists a positive constant~$c$ such that,
given any $\lambda \in \C$ 
and $f \in L^2(\Omega,|x|\,dx)$,
any solution $u \in \mathcal{D}(-\Delta_\alpha^\Omega)$
of the equation $(-\Delta_\alpha^\Omega-\lambda)u=f$ satisfies
				\begin{equation}\label{eq:res_1}
					\norm{\nabla u^-}_{L^2(\Omega)}\leq c \, \norm{r f}_{L^2(\Omega)},
					\qquad \text{for}\quad \abs{\Im\lambda}\leq \Re \lambda,
				\end{equation}
				and
				\begin{equation}\label{eq:res_2}
					\; \,\norm{\nabla u}_{L^2(\Omega)}\leq c \, \norm{r f}_{L^2(\Omega)},
					\qquad \text{for}\quad \abs{\Im\lambda}> \Re \lambda,
				\end{equation}
Here the auxiliary function $u^-$ is defined as follows 
	\begin{equation}\label{eq:u^-}
		u^-(x):= e^{-i (\Re \lambda)^{1/2} \sgn(\Im \lambda) \abs{x}} \, u(x).
	\end{equation}
\end{theorem}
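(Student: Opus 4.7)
The plan is to obtain the estimates \eqref{eq:res_1}--\eqref{eq:res_2} by re-running the method of multipliers used for Theorem~\ref{thm:non_self-adjoint}, but this time keeping the inhomogeneity $f$ throughout the computation rather than setting it to zero. The starting point is the weak form of $(-\Delta^\Omega_\alpha - \lambda)u = f$, which after integration by parts reads
\begin{equation*}
  \int_\Omega \nabla u \cdot \overline{\nabla \varphi} + \int_{\partial\Omega} \alpha\, u \, \bar\varphi - \lambda \int_\Omega u \, \bar\varphi = \int_\Omega f \, \bar\varphi
\end{equation*}
for all admissible test functions $\varphi$. As in \cite{FKV} and in the proof of Theorem~\ref{thm:non_self-adjoint}, I would test the equation with three types of multipliers: (i)~$\varphi = u$ itself, yielding the basic energy identity whose real and imaginary parts encode the dissipation; (ii)~a Morawetz-type multiplier of the form $\varphi = x \cdot \nabla u + \frac{n-1}{2}u$, yielding a virial identity that produces on the left-hand side the positive quantity $\|\nabla u\|_{L^2(\Omega)}^2$ together with boundary contributions governed by $x\cdot\nabla(\Re\alpha)$ and $\div(x'\Im\alpha)$; and (iii)~the gauge-type multiplier involving the twist factor $e^{\pm i(\Re\lambda)^{1/2}\sgn(\Im\lambda)|x|}$, needed to treat frequencies in the cone $\mathcal{C}$.

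Case 1: $|\Im\lambda|>\Re\lambda$ (outside~$\mathcal{C}$). Here the imaginary part of the $\varphi=u$ identity already controls $\|u\|_{L^2}^2$ up to the boundary term $\int_{\partial\Omega}\Im\alpha\,|u|^2$, and the hypothesis~\eqref{hp:non_self-adjoint} makes this coercive. Combining with the Morawetz identity I expect to get
\begin{equation*}
  \|\nabla u\|_{L^2(\Omega)}^2 \lesssim \Bigl|\int_\Omega f\,(x\cdot\overline{\nabla u}+\tfrac{n-1}{2}\bar u)\Bigr| + (\text{boundary terms controlled by } \alpha),
\end{equation*}
after which Cauchy--Schwarz together with the Hardy-type estimate $\|u/r\|_{L^2(\Omega)}\lesssim \|\nabla u\|_{L^2(\Omega)}$ (valid since $n\geq 3$) gives \eqref{eq:res_2}.

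Case 2: $|\Im\lambda|\leq\Re\lambda$ (inside~$\mathcal{C}$, including positive reals where embedded eigenvalues live). The natural variable is $u^-$ rather than $u$, because the conjugation by $e^{-i(\Re\lambda)^{1/2}\sgn(\Im\lambda)|x|}$ cancels the oscillation of the resolvent kernel at the critical frequency $(\Re\lambda)^{1/2}$. I would rewrite the equation as a PDE for $u^-$, whose gradient picks up lower-order terms proportional to $\sqrt{\Re\lambda}\, u^-\, x/|x|$, and then apply the same triple of multipliers to $u^-$. The resulting virial identity delivers $\|\nabla u^-\|_{L^2(\Omega)}^2$ on the left, and the $\lambda$-dependent bulk terms collapse thanks to the twist, leaving only (i) boundary contributions, which are absorbed using \eqref{eq:alpha_condition}--\eqref{eq:fractional_derivative} exactly as in Theorem~\ref{thm:non_self-adjoint} via the fractional Leibniz and Sobolev inequalities \eqref{eq:chain_rule}, \eqref{eq:Sobolev_inequality}, and (ii) the inhomogeneous term $\int_\Omega f\cdot(\text{multiplier applied to }u^-)$, which is controlled by $\|rf\|_{L^2(\Omega)}\|\nabla u^-\|_{L^2(\Omega)}$ via Cauchy--Schwarz and Hardy.

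The main obstacle will be the boundary analysis for $u^-$ in Case~2: the twist factor depends on $|x|$ and hence is not flat on $\partial\Omega$, so the Robin boundary condition transforms non-trivially and produces tangential derivatives of the exponential on $\partial\Omega$. Carrying out the fractional-derivative bookkeeping on these modified boundary terms, and verifying that the smallness condition \eqref{eq:smallness} still suffices to absorb them into the positive left-hand side, is the delicate part; this is where one really uses that the $x\cdot\alpha$ combinations in \eqref{eq:alpha_condition}--\eqref{eq:fractional_derivative} are the exact quantities produced by the Morawetz weight. Once the absorption is done, Theorem~\ref{thm:main_resolvent} follows immediately by duality and the identity $\|r^{-1}u\|_{L^2(\Omega)}\lesssim \|\nabla u^-\|_{L^2(\Omega)}+\|\nabla u\|_{L^2(\Omega)}$ provided by Hardy's inequality.
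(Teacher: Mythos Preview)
Your proposal is essentially the paper's approach: split according to whether $\lambda$ lies in the cone $\mathcal{C}$, use the multiplier identities of Lemma~\ref{lemma:5identities} (packaged as Lemma~\ref{crucial_identity} for the cone case), and absorb the boundary contribution exactly as in the proof of Theorem~\ref{thm:non_self-adjoint}. Two corrections are worth making. First, for $|\Im\lambda|>\Re\lambda$ the paper does \emph{not} use the Morawetz multiplier at all: it simply takes the real part of~\eqref{eq:first_Omega_const}, controls $\Re\lambda\int_\Omega|u|^2$ via the $L^2$-bound~\eqref{L^2_bound} (legitimate since $\Re\lambda<|\Im\lambda|$), and the resulting boundary term $\int_{\partial\Omega}(\Re\alpha-|\Im\alpha|)|u|^2$ is non-negative by~\eqref{hp:non_self-adjoint}; Cauchy--Schwarz plus Hardy then give~\eqref{eq:res_2} directly. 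Second, there is no need to ``rewrite the equation as a PDE for $u^-$'' or to worry about how the Robin condition transforms under the twist: the paper applies all multipliers to~$u$ itself and then uses the pointwise identity $|\nabla u^-|^2=|\nabla u|^2+\Re\lambda\,|u|^2-2\sqrt{\Re\lambda}\,\sgn(\Im\lambda)\Im\bigl(\tfrac{x}{|x|}\cdot\bar u\nabla u\bigr)$ to recombine the outputs into~\eqref{eq:crucial_identity}; the boundary term that emerges is $2\Re\int_{\partial\Omega}x\,\alpha\,u^-\cdot\overline{\nabla u^-}$, bounded verbatim as in Section~\ref{Sec.nsa1}. One ingredient you underplay: the source term $F_3=(\Re\lambda)^{-1/2}|\Im\lambda|\,\Re\int_\Omega|x|f\bar u$ on the right of~\eqref{eq:crucial_identity} is not directly of the form $\|rf\|\,\|\nabla u^-\|$ and requires the auxiliary bound~\eqref{L^2_bound}; this introduces an extra $\tfrac12\int_{\partial\Omega}|\Im\alpha|\,|u|^2$ on the right, which is then absorbed using $(n-1)\Re\alpha\geq\tfrac12|\Im\alpha|$ from~\eqref{hp:non_self-adjoint}.
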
 
As a conclusion we stress that even though the proofs of our results are strongly based on the geometry we decided to focus on, namely the half-space domain, the underlying strategy we use is rather general and, at some extends, open to generalisations to different geometries. To that end, along the paper we tried to emphasize where the role of the geometry enters into play and, when applicable, we provided ideas for a more general investigation.

The paper is organised as follows. 
In Section~\ref{Sec.pre} we collect crucial identities  
on which the technique of multipliers is based.
In order to lighten the presentation, 
we only sketch the main ideas and 
the complete proof is postponed
to Appendix~\ref{Appendix:5_identities}. 
The technique of multipliers is then fully developed in Section~\ref{section:proofs},
where we provide proofs of the main theorems presented in the introduction. 
In Appendix~\ref{Appendix:rigorous_definition} 
we provide details on the definition of the Robin Laplacian $-\Delta_\alpha^\Omega$
and characterise its operator domain.

\section{Preliminaries}\label{Sec.pre}
In this preliminary section we develop the method of multipliers 
to provide some integral identities for solutions~$u$ to 
the boundary-value problem
\begin{equation}\label{eq:resolvent_gen}
	\begin{system}
		-\Delta u&= \lambda u +f \quad& \text{in}\quad \Omega,\\
		-u_{x_{n}} + \alpha u&=g &\text{on}\quad \partial \Omega,
	\end{system}
\end{equation}
where $f\colon \Omega \to \C$, $g\colon \partial \Omega \to \C$
and $\alpha\colon \partial \Omega \to \C$ are measurable functions
and $\lambda \in \C$.
We shall restrict to solutions belonging to the space
\begin{equation*}
  \widetilde{\mathcal{D}}_0
  :=\big\{
  u\in H^{3/2}(\Omega)\,\big|\, \Delta u \in L^2(\Omega) \ \& \
  \supp u \mbox{ is compact in } \R^n 
  \big\}
\end{equation*}
and consider the weak formulation of~\eqref{eq:resolvent_gen}.
More specifically, we say that
a compactly supported~$u$ solves~\eqref{eq:resolvent_gen} 
if $u \in \widetilde{\mathcal{D}}_0$ and the identity
\begin{equation}\label{eq:weak_resolvent_gen}
	\int_{\Omega} \nabla u \cdot \nabla \bar{v}\, dx + \int_{\partial \Omega} \alpha u \bar{v}\, d\sigma=\lambda \int_{\Omega} u \bar{v}\, dx + \int_{\Omega} f \bar{v}\, dx + \int_{\partial \Omega} g \bar{v}\, d\sigma
\end{equation}
holds true for every $v\in H^1(\R^n)$.
Here $d\sigma$ is the surface measure on $\partial \Omega$
and the boundary values are understood in the sense of traces. 
 
The method of multipliers is based on producing several integral identities
by choosing various test functions~$v$ in~\eqref{eq:weak_resolvent_gen}
and later combining them in a refined way.
The compact support requirement is just a technical condition 
for the justification of the algebraic manipulations 
in which the test function~$v$ involve unbounded multiples of~$u$
and its derivatives (so that it is not \emph{a priori} clear 
that the particular choice~$v$ belongs to $H^1(\R^n)$). 
However, in Section~\ref{section:proofs},
we shall see that \emph{any} weak solution to the eigenvalue problem
\begin{equation}\label{eq:Schr_half_plane}
	\begin{system}
		-\Delta u&=\lambda u \quad &\text{in}\quad \, \,\, \Omega,\\
		-u_{x_{n}} + \alpha  u&=0  &\text{on}\quad \partial \Omega,
	\end{system}
\end{equation}
can be approximated by a sequence of compactly supported functions solving an approximating boundary-value problem of the type~\eqref{eq:resolvent_gen}, 
with corrections~$f$ and~$g$ small in a suitable topology. 
This will enable us to exploit the results of this section to get information also for not necessarily compactly supported solutions to~\eqref{eq:Schr_half_plane}.

We stress that functions $u \in \widetilde{\mathcal{D}}_0$ 
are required to have a compact support in~$\R^n$ 
and not necessarily in~$\Omega$. 
This involves the convention that we use the same symbol~$u$ 
for $u\in H^{s}(\Omega)$ with $s\geq 0$ 
and its extension $Eu\in H^{s}(\R^n)$,
where $E: H^{s}(\Omega) \to H^{s}(\R^n)$ is an extension operator
(see, \emph{e.g.}, \cite[Sec.~5.17]{Adams2}).
Though the existence of such an extension operator $E$ for a general domain~$\Omega$ is not trivial and deeply depends on the geometry of~$\Omega$, 
it is a classical result that in our particular situation (half-space)
the extension operator can be built making use of a suitable reflection  argument.
Similarly, given a function~$u$ from the space 
\begin{equation}\label{space}
  \widetilde{\mathcal{D}}
  := \big\{
  u\in H^{3/2}(\Omega)\,\big|\, \Delta u \in L^2(\Omega)  
  \big\}
\end{equation}
and its derivative~$u_{x_n}$ with respect to the last variable,
we respectively use the same symbols~$u$ and~$u_{x_n}$
for the Dirichlet trace $\gamma_D u$ 
and the Neumann trace $\gamma_D u_{x_n}$,
where $\gamma_D:\widetilde{\mathcal{D}}\to L^2(\partial\Omega)$
is the Dirichlet trace map
(see Appendix~\ref{Appendix:rigorous_definition} for more details).

The crucial aforementioned integral identities 
are collected in the following lemma.

\begin{lemma}\label{lemma:5identities}
Let $n\geq 2$
and assume that $f\in L^2(\Omega)$ and $g\in L^2(\partial\Omega).$ 
Then any weak solution $u\in \widetilde{\mathcal{D}}_0$ 
of~\eqref{eq:resolvent_gen} satisfies the following five identities:
\begin{equation}\label{eq:first_Omega_const}
		\int_{\Omega} \abs{\nabla u}^2 \,dx + \int_{\partial \Omega} \Re \alpha \abs{u}^2\, d\sigma\\
	  = \Re \lambda \int_{\Omega} \abs{u}^2\, dx + \Re \int_{\Omega} {f} \bar{u}\, dx + \Re \int_{\partial \Omega} {g} \bar{u}\, d\sigma,
	\end{equation}
	\begin{multline}\label{eq:first_Omega_x}
		-\frac{n-1}{2} \int_{\Omega} \frac{~\abs{u}^2}{\abs{x}} \,dx + \int_{\Omega} \abs{x} \abs{\nabla u}^2 \,dx + \int_{\partial \Omega} \Re \alpha \abs{x} \abs{u}^2\, d\sigma\\
	  = \Re \lambda \int_{\Omega} \abs{x} \abs{u}^2\, dx + \Re \int_{\Omega} \abs{x} {f}\bar{u}\, dx + \Re \int_{\partial \Omega} \abs{x} {g} \bar{u}\, d\sigma,
	\end{multline}
	\begin{equation}\label{eq:first_Omega_im_const}
			\int_{\partial \Omega} \Im \alpha \abs{u}^2\, d\sigma= \Im \lambda \int_{\Omega} \abs{u}^2\, dx
			+ \Im \int_{\Omega} f \bar{u}\, dx + \Im \int_{\partial \Omega} {g} \bar{u}\, d\sigma,
	\end{equation}

	\begin{equation}\label{eq:first_Omega_im_x}
			\Im \int_{\Omega} \frac{x}{\abs{x}} \cdot \bar{u}\nabla u\, dx + \int_{\partial \Omega} \Im \alpha \abs{x} \abs{u}^2\, d\sigma= \Im \lambda \int_{\Omega} \abs{x} \abs{u}^2\, dx
			+ \Im \int_{\Omega} \abs{x} f \bar{u}\, dx + \Im \int_{\partial \Omega} \abs{x} {g} \bar{u}\, d\sigma,
	\end{equation}
	\begin{multline}\label{eq:second_Omega}
		2 \int_{\Omega} \abs{\nabla u}^2\,dx
		+ n \int_{\partial \Omega} \Re \alpha \abs{u}^2\,d\sigma +2\Re \int_{\partial \Omega} \alpha x \cdot (u \nabla \bar{u}) \,d\sigma\\
		= -2\Im \lambda \Im \int_{\Omega}x \cdot u \nabla \bar{u}\, dx 
		+n\Re\int_{\Omega} f \bar{u}\, dx + 2\Re \int_{\Omega} f x \cdot \nabla \bar{u}\, dx\\ 
		+n\Re\int_{\partial \Omega} g \bar{u}\, d\sigma + 2\Re \int_{\partial \Omega} g x \cdot \nabla \bar{u}\, d\sigma.
	\end{multline}
\end{lemma}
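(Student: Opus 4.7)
The plan is to derive the five identities by choosing five specific test functions $v$ in the weak formulation~\eqref{eq:weak_resolvent_gen} and separating real and imaginary parts. A key geometric feature of the half-space that will be exploited throughout is that the outward unit normal to $\partial\Omega = \{x_n = 0\}$ is $\nu = -e_n$, so that $x \cdot \nu = -x_n = 0$ and $(x/|x|) \cdot \nu = 0$ on $\partial\Omega$; this vanishing kills the surface contributions produced by several integrations by parts and is precisely what keeps the resulting identities as clean as they are.

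For identities~\eqref{eq:first_Omega_const} and~\eqref{eq:first_Omega_im_const} the natural (and trivially admissible) choice is $v = u$, after which the real and imaginary parts of~\eqref{eq:weak_resolvent_gen} yield the two identities directly. For~\eqref{eq:first_Omega_x} and~\eqref{eq:first_Omega_im_x} I would test with the radial weight $v = |x|\, u$. Expanding $\nabla(|x|\, u) = (x/|x|)\, u + |x|\, \nabla u$, the real part of $\int_\Omega \nabla u \cdot \nabla(\overline{|x|\, u})$ produces $\int_\Omega |x|\, |\nabla u|^2 + \tfrac{1}{2}\int_\Omega (x/|x|)\cdot\nabla |u|^2$, and integrating by parts the second piece using $\div(x/|x|) = (n-1)/|x|$ together with the vanishing boundary term yields the Hardy-type quantity $-\tfrac{n-1}{2}\int_\Omega |u|^2/|x|\,dx$; splitting into real and imaginary parts gives the two identities. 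For the Morawetz--Pohozaev identity~\eqref{eq:second_Omega} the right multiplier is $v = 2\,x\cdot\nabla u + n\, u$. The core computation
\begin{equation*}
\Re\,\nabla u \cdot \nabla\overline{x\cdot\nabla u}
= |\nabla u|^2 + \tfrac{1}{2}\, x\cdot\nabla|\nabla u|^2,
\end{equation*}
combined with $\div x = n$ and $x\cdot\nu = 0$ on $\partial\Omega$, gives $\Re\int_\Omega \nabla u\cdot\nabla\overline{x\cdot\nabla u} = -\tfrac{n-2}{2}\int_\Omega |\nabla u|^2$, which together with the $n\,u$ piece delivers exactly the $2\int|\nabla u|^2$ on the left-hand side. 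On the right-hand side the term $\Re\lambda\int u\,\bar v$ collapses to $-2\Im\lambda\,\Im\int_\Omega x\cdot u\nabla\bar u\, dx$, because $\Re\int x\cdot u\nabla\bar u = -\tfrac{n}{2}\int|u|^2$ precisely cancels the $n\int|u|^2$ contribution coming from the $n\,u$ part of $v$.

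The main obstacle is the rigorous justification of admissibility of the last two multipliers as elements of $H^1(\R^n)$: the weight $|x|$ is only Lipschitz, with a singular gradient at the origin, and more seriously $v = x\cdot\nabla u$ formally involves second derivatives of $u$, whereas $u\in H^{3/2}(\Omega)$ with $\Delta u\in L^2(\Omega)$ is not enough to place them in $L^2$. I would handle this by a standard regularisation scheme: truncate the radial weight near the origin by a smooth cutoff $\chi_\varepsilon(|x|)$, mollify the (already compactly supported) extension of $u$ to $\R^n$ provided by the reflection construction mentioned in the paper, perform the algebraic manipulations above for the regularised objects, and pass to the limit using the trace theorem for $H^{3/2}(\Omega)$, which guarantees $u|_{\partial\Omega}\in H^1(\partial\Omega)$ and $u_{x_n}|_{\partial\Omega}\in L^2(\partial\Omega)$, together with the fact that on $\partial\Omega$ the multiplier $x\cdot\nabla u$ reduces to the tangential derivative $x'\cdot\nabla' u$, so that all boundary integrals appearing in~\eqref{eq:second_Omega} are well defined.
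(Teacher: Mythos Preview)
Your formal derivation is correct and coincides with the paper's: the same multipliers $v=u$, $v=|x|\,u$, and $v=2x\cdot\nabla u+nu$ (the paper writes the last one as $[\Delta,\phi]u$ with $\phi=|x|^2$, which is $2nu+4x\cdot\nabla u$, i.e.\ twice yours), and the same use of $x\cdot\nu=0$ on $\partial\Omega$ to kill the extraneous surface terms. Your algebraic bookkeeping for~\eqref{eq:second_Omega} is also right.

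The one substantive difference is the regularisation of the Morawetz multiplier. The paper does \emph{not} mollify~$u$; instead it replaces the derivative in the test function by a symmetric difference quotient, setting
\[
v_\delta \;=\; \Delta\phi\,u \;+\; \partial_k\phi\,\big[\partial_k^{\delta}u+\partial_k^{-\delta}u\big],
\qquad \partial_k^{\delta}u(x):=\frac{u(x+\delta e_k)-u(x)}{\delta}.
\]
Since translation preserves $H^1$, this $v_\delta$ is automatically an admissible test function, and the ``integration by parts for difference quotients'' moves the extra discrete derivative from~$u$ onto the smooth weight~$\phi$; after this, every surviving integrand involves only first derivatives of~$u$, and one passes to the limit $\delta\to0$ using only $\partial_k^{\delta}u\to\partial_k u$ in $L^2$ together with the trace bounds of Lemma~\ref{lemma:Neumann_3/2epsilon}. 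In particular the argument never touches $D^2u$ and does not invoke $\Delta u\in L^2$ in the kinetic term.

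Your mollification route is plausible but, as written, has a gap you should be aware of. The ``core computation'' $\Re\,\nabla u\cdot\nabla\overline{x\cdot\nabla u}=|\nabla u|^2+\tfrac12\,x\cdot\nabla|\nabla u|^2$ is a pointwise identity for a \emph{single} smooth function; once you mollify only the test function you are pairing $\nabla u$ with $\nabla\overline{(x\cdot\nabla u_\varepsilon)}$, and this mixed expression does not reduce in the same way. To close the argument you would have to integrate by parts so as to land on $\Delta u\in L^2$ paired with $x\cdot\nabla u_\varepsilon$, and then control the boundary traces $\nabla u_\varepsilon\!\upharpoonright_{\partial\Omega}\to\nabla u\!\upharpoonright_{\partial\Omega}$ in $L^2(\partial\Omega)$ via the $H^{3/2}$ convergence. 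This can be made to work, but it is more delicate than what you sketched; the difference-quotient device sidesteps the issue entirely.
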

\begin{remark}\label{rmk:justified}
Notice that since $u\in \widetilde{\mathcal{D}}_0$ 
and $\alpha \in L^\infty(\partial \Omega)$,
the integrals in \eqref{eq:first_Omega_const}--\eqref{eq:second_Omega} 
are well defined 
(in particular, the boundary terms make sense 
in virtue of Lemma~\ref{lemma:Neumann_3/2epsilon}).  Since $u, \nabla u$ and $f \in L^2(\Omega)$ it is immediate to see that $\abs{\nabla u}^2, \abs{u}^2, \frac{x}{\abs{x}} \cdot \bar{u}\nabla u$ and $f\bar{u}$ are integrable functions on $\Omega$ (the last two follow from the Cauchy-Schwarz inequality). Moreover, since $u$ and $g\in L^2(\partial \Omega)$ and $\alpha \in L^\infty(\partial \Omega),$ from the H\"older and the Cauchy-Schwarz inequalities it follows that $\Re\alpha \abs{u}^2, \Im \alpha \abs{u}^2$ and  $g\bar{u}$ are integrable functions on $\partial \Omega.$  Now notice that since $u\in \widetilde{\mathcal{D}}_0,$ in particular, $u$ is compactly supported and therefore $\abs{x}\in L^\infty(\supp u).$ This fact implies that a multiplication by $\abs{x}$ of integrable functions on the support of $u$ does not pose any further integrability issue, this ensures that $\abs{x}\abs{\nabla u}^2, x\cdot u \nabla \bar{u}, \abs{x} f \bar{u}, f x \cdot \nabla \bar{u}$ are integrable on $\Omega$ and $\Re \alpha \abs{x} \abs{u}^2, \Im \alpha \abs{x} \abs{u}^2, \abs{x} g \bar{u}, g x \cdot \nabla \bar{u}$ are integrable on $\partial \Omega.$ For the last one we have also used that $\nabla u \in L^2(\partial \Omega)$ (see Lemma~\ref{lemma:Neumann_3/2epsilon}). Finally, $\frac{~\abs{u}^2}{\abs{x}}$ is integrable on $\Omega$ in virtue of the weighted Hardy inequality~\eqref{eq:weighted_Hardy} and the fact that $\int_\Omega\abs{x}\abs{\nabla u}^2$ is finite.
\end{remark}
\begin{remark}\label{rmk:1d_identities}
The restriction to dimensions $n \geq 2$ in the lemma
is just because we did not find a unified way how to write 
the identities in all dimensions. 
The analogues of \eqref{eq:first_Omega_const}--\eqref{eq:second_Omega}
for the (somewhat special) case $n=1$ read
\begin{equation} 
\tag{\ref*{eq:first_Omega_const}'}
	\int_0^\infty \abs*{\frac{d}{dx} u}^2\, dx + \Re \alpha \abs{u}^2(0)= \Re\lambda \int_0^\infty \abs{u}^2\, dx + \Re \int_0^\infty f \bar{u}\, dx + \Re(g \bar{u}(0)),
\end{equation}
\begin{equation}
\tag{\ref*{eq:first_Omega_x}'}
\label{eq:negative_term}
	\int_0^\infty x \abs*{\frac{d}{dx}u}^2\, dx - \frac{1}{2}\abs{u}^2(0)= \Re \lambda \int_0^\infty x \abs{u}^2\, dx + \Re \int_0^\infty x f \bar{u}\, dx,
\end{equation}
\begin{equation}
\tag{\ref*{eq:first_Omega_im_const}'}
	\Im \alpha \abs{u}^2(0)= \Im \lambda \int_0^\infty \abs{u}^2\, dx + \Im \int_0^\infty f \bar{u}\, dx + \Im (g \bar{u}(0)), 
\end{equation}
\begin{equation}
\tag{\ref*{eq:first_Omega_im_x}'}
	\Im \int_0^\infty \bar{u}\frac{d}{dx}u\, dx= \Im \lambda \int_0^\infty x \abs{u}^2\, dx + \Im \int_0^\infty x f \bar{u}\, dx,
\end{equation}
\begin{equation}
\tag{\ref*{eq:second_Omega}'}
		2\int_0^\infty \abs*{\frac{d}{dx}u}^2\, dx + \Re \alpha \abs{u}^2(0)=-2\Im\lambda \Im \int_0^\infty x u \frac{d}{dx}\bar{u}\, dx + \Re \int_0^\infty f \bar{u}\, dx + 2\Re \int_0^\infty f x \frac{d}{dx}\bar{u}\, dx + \Re(g \bar{u}(0)).
\end{equation}
Notice also that if $n=1$ 
then $\alpha$ and $g$ are just complex numbers 
and Remark~\ref{rmk:justified} applies here as well.
\end{remark}

The proof of the previous lemma follows 
the standard technique of multipliers roughly described above.
The initial idea goes back to Morawetz~\cite{Morawetz_1968}
(see also~\cite{Morawetz-Ludwig_1968}), 
who employed a combination of various choices of multipliers 
to study the time-decay of the Klein-Gordon equation,
and the technique has been developed in several other contexts since. 
To name just a few among the most outstanding subsequent works exploiting this method, let us mention the seminal work~\cite{Perthame-Vega_1999} for the Helmholtz equation and some next generalisations~\cite{Barcelo-Fanelli-Ruiz-Vilela_2013,Barcelo-Vega-Zubeldia_2013,Cacciafesta-Ancona-Luca}. 
Concerning the Schr\"odinger equation we should quote~\cite{D'Ancona-Fanelli_2009,Cassano-Ancona_2012,FKV,FKV2} and for the Dirac equation~\cite{Boussaid-Ancona-Fanelli_2011,Cacciafesta_2011}. Moreover we mention~\cite{Cossetti_2017} for an adaptation of this method to an elasticity setting. 

In this section, we just sketch the main ideas behind the proof of 
Lemma~\ref{lemma:5identities}
and postpone the complete proof to Appendix~\ref{Appendix:5_identities}.

\begin{proof}[Sketch of the proof of Lemma~\ref{lemma:5identities}]
Equations~\eqref{eq:first_Omega_const} and~\eqref{eq:first_Omega_x} 
are obtained by choosing $v:=\varphi u$ in~\eqref{eq:weak_resolvent_gen},
taking the real part of the resulting identity, 
performing some integration by parts 
and at last choosing $\varphi(x):=1$ and $\varphi(x):=\abs{x}$, respectively.  

Equations~\eqref{eq:first_Omega_im_const} and~\eqref{eq:first_Omega_im_x} 
are derived by choosing $v:=\psi u$ in~\eqref{eq:weak_resolvent_gen},
taking the imaginary part of the resulting identity, 
performing some integration by parts and at last choosing 
$\psi(x):=1$ and $\psi(x):=\abs{x}$, respectively. 

Finally, choosing 
$
  v
  := [\Delta, \phi]u
  = \Delta \phi u + 2 \nabla \phi \cdot \nabla u
$ 
in~\eqref{eq:weak_resolvent_gen},
taking the real part of the resulting identity, 
performing some integration by parts 
and at last choosing $\phi=\abs{x}^2$ 
together with multiplying by~$1/2$ the resulting identity,
we obtain equation~\eqref{eq:second_Omega}. 
\end{proof}
\begin{remark}\label{Rem.Mourre}
We remark that the last identity~\eqref{eq:second_Omega} 
is closely related to the commutator theory \emph{\`a la} Mourre
of conjugate operators 
(see~\cite{Mourre} for the pioneering work and~\cite{ABG} for more recent developments)
and the virial theorem in quantum mechanics
(see~\cite{Weidmann_1967} for a first rigorous treatment 
and~\cite[Sec.~XIII.13]{RS4} for an overview and further references).
Indeed, with the choice above, one has $v = 4i A u$,
where $A := -\frac{i}{2}(x\cdot\nabla+\nabla\cdot x)$
is the usual dilation operator
(\emph{i.e.}\ the symmetrised version of the radial momentum in quantum mechanics).
The algebraic manipulations described above are related
to computing the formal commutator $i[-\Delta,A] = 2(-\Delta)$, which is positive,
and to taking into account the contributions of~$f$, $g$ and~$\alpha$.
A refined combination with the other identities of Lemma~\ref{lemma:5identities}
enables one to go beyond the standard self-adjoint setting 
and consider finer spectral properties,
see Remark~\ref{Rem.Eidus} below.
\end{remark}
%

\section{Proofs}\label{section:proofs}
This section is devoted to establish our main results
collected in Theorems~\ref{thm:self-adjoint}--\ref{thm:resolvent}.
As already underlined, their proofs share, as a starting point, 
the usage of the identities contained in Lemma~\ref{lemma:5identities}. 
Hence our first step consists in approximating solutions 
of~\eqref{eq:resolvent_gen} when $f=g=0$
(and in particular of~\eqref{eq:Schr_half_plane} when $f=0$) 
by a sequence of compactly supported functions.

\subsection{Approximation by compactly supported solutions}\label{Sec.approx}
The desired approximation is achieved by a usual ``horizontal cut-off''.
Let $\xi\colon [0, \infty) \to [0,1]$ be a smooth function 
such that 
\begin{equation*}
	\xi(r)= 
	\begin{cases}
		1 \quad &0\leq r\leq 1,\\
		0  &r\geq 2.
	\end{cases}
\end{equation*} 
Given a positive number~$R$,
we set $\xi_R(x):=\xi(\abs{x}R^{-1}).$ 
Then $\xi_R\colon \R^n \to [0,1]$ is such that
\begin{equation}\label{def:xiR}
  \xi_R = 1 \quad \text{in}\quad B_R(0), \qquad 
  \xi_R= 0 \quad \text{in}\quad \R^n\setminus B_{2R}(0), \qquad 
  \abs{\nabla \xi_R}\leq cR^{-1}, \qquad 
  \abs{\Delta \xi_R}\leq c R^{-2},
\end{equation}
where $B_R(0)$ stands for the open ball centered at the origin and with radius $R>0$ and $c> 1$ is a suitable constant independent of~$R$.
For any function $h\colon \R^n \to \C$ 
we then define the compactly supported approximating family of functions 
by setting
\begin{equation*}
	h_R:=\xi_R h.
\end{equation*}
If $u\in \widetilde{\mathcal{D}}$ is 
a weak solution to~\eqref{eq:resolvent_gen} with $g=0$, 
it is not difficult to show that $u_R \in \widetilde{\mathcal{D}}_0$ 
solves in a weak sense the following problem
\begin{equation}\label{eq:resolvent_approx}
	\begin{system}
		-\Delta u_R&=\lambda u_R + \widetilde{f}_R \quad& \text{in}\quad \Omega,
\\
		-(u_R)_{x_n} + \alpha u_R &= \widetilde{g}_R &\text{on}\quad \partial \Omega,
	\end{system}
\end{equation}
where 
\begin{equation}\label{eq:error_terms}
\widetilde{f}_R:= f_R -2\nabla u \cdot \nabla \xi_R - u \Delta \xi_R
\qquad \mbox{and} \qquad 
\widetilde{g}_R:= u \nabla \xi_R \cdot \eta.
\end{equation}
Hereafter $\eta$ denotes the outgoing unit normal 
to the boundary $\partial \Omega,$ namely $\eta=(0,\dots, 0,-1).$ 
Notice that~$u_R$ satisfies the hypotheses of Lemma~\ref{lemma:5identities}, 
therefore identities \eqref{eq:first_Omega_const}--\eqref{eq:second_Omega} 
are available for solutions to~\eqref{eq:resolvent_approx}.
 
The next easy result shows that the extra terms which originate from the introduction of the horizontal cut-off, measured with respect to a suitable topology, 
become negligible as~$R$ increases. 

\begin{lemma}\label{lemma:fg}
Given $u\in \widetilde{\mathcal{D}}$,
let $\widetilde{f}_R$ and $\widetilde{g}_R$ be as in~\eqref{eq:error_terms}. 
Then the estimates 
\begin{align*}
  \norm{\widetilde{f}_R}_{L^2(\Omega)} 
  &\leq \norm{f_R}_{L^2(\Omega)} + \varepsilon_1(R), 
  & 
  \norm{x \widetilde{f}_R}_{L^2(\Omega)}
  &\leq \norm{x f_R}_{L^2(\Omega)} + \varepsilon_2(R),
  \\
  \norm{\widetilde{g}_R}_{L^2(\partial \Omega)}
  &\leq \varepsilon_3(R), 
  &
  \norm{x \widetilde{g}_R}_{L^2(\partial \Omega)}
  &\leq \varepsilon_4(R),
\end{align*}
hold true, where 
\begin{equation}\label{limit_R}
\lim_{R\to \infty} \varepsilon_i(R)=0,\qquad i=1,2,3,4.
\end{equation}
(The last inequality is trivial if $n=1$.)
\end{lemma}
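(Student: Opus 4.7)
The plan is to apply the triangle inequality to isolate $f_R$ in the bulk estimates, and then bound the resulting commutator-type terms by exploiting the support properties and uniform bounds of $\xi_R$ collected in~\eqref{def:xiR}, together with the $L^2$-integrability of $u$ and $\nabla u$ (which follows from $u \in \widetilde{\mathcal{D}} \subset H^{3/2}(\Omega) \subset H^1(\Omega)$) and of the Dirichlet trace of $u$ on $\partial\Omega$ (which follows from the trace inclusion $H^{3/2}(\Omega) \to H^1(\partial\Omega) \hookrightarrow L^2(\partial\Omega)$). Throughout, denote by $A_R := \{x \in \R^n : R \leq \abs{x} \leq 2R\}$ the spherical shell on which $\nabla \xi_R$ and $\Delta \xi_R$ are supported.

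For the bulk bounds, the triangle inequality applied to $\widetilde{f}_R = f_R - 2 \nabla u \cdot \nabla \xi_R - u \Delta \xi_R$, combined with the pointwise estimates $\abs{\nabla \xi_R} \leq c R^{-1}$ and $\abs{\Delta \xi_R} \leq c R^{-2}$ on $A_R$, yields
\begin{equation*}
  \norm{\widetilde{f}_R}_{L^2(\Omega)}
  \leq \norm{f_R}_{L^2(\Omega)}
  + 2 c R^{-1} \norm{\nabla u}_{L^2(A_R \cap \Omega)}
  + c R^{-2} \norm{u}_{L^2(A_R \cap \Omega)},
\end{equation*}
and I would set $\varepsilon_1(R)$ to be the sum of the last two summands. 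For the weighted version, the observation that $\abs{x} \leq 2R$ on $A_R$ absorbs one power of $R$ in each error term, so
\begin{equation*}
  \norm{x\,\widetilde{f}_R}_{L^2(\Omega)}
  \leq \norm{x f_R}_{L^2(\Omega)}
  + 4 c \norm{\nabla u}_{L^2(A_R \cap \Omega)}
  + 2 c R^{-1} \norm{u}_{L^2(A_R \cap \Omega)}
  =: \norm{x f_R}_{L^2(\Omega)} + \varepsilon_2(R).
\end{equation*}
Since $u,\nabla u \in L^2(\Omega)$, the dominated convergence theorem applied to the characteristic functions of $A_R \cap \Omega$ (which vanish pointwise as $R \to \infty$) gives $\varepsilon_1(R),\varepsilon_2(R) \to 0$.

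For the boundary bounds, the expression $\widetilde{g}_R = u \nabla \xi_R \cdot \eta$ is supported in $A_R \cap \partial\Omega$, so the same cutoff estimates produce
\begin{equation*}
  \norm{\widetilde{g}_R}_{L^2(\partial\Omega)}
  \leq c R^{-1} \norm{u}_{L^2(A_R \cap \partial\Omega)} =: \varepsilon_3(R),
  \qquad
  \norm{x\,\widetilde{g}_R}_{L^2(\partial\Omega)}
  \leq 2 c \norm{u}_{L^2(A_R \cap \partial\Omega)} =: \varepsilon_4(R),
\end{equation*}
and the convergence $\varepsilon_3(R),\varepsilon_4(R) \to 0$ follows again by dominated convergence. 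The proof is essentially routine; the only point that deserves a moment's care is to guarantee the \emph{global} (rather than merely local) $L^2$-integrability of the trace of $u$ on $\partial\Omega$, which is what actually forces the tail integrals over $A_R \cap \partial\Omega$ to vanish in the limit. This is secured precisely by the trace theorem for the half-space applied to $u \in H^{3/2}(\Omega)$.
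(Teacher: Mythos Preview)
Your proof is correct and follows essentially the same approach as the paper: triangle inequality on $\widetilde{f}_R$, pointwise bounds on $\nabla\xi_R$ and $\Delta\xi_R$ supported in the annulus $A_R$, the observation $\abs{x}\leq 2R$ on $A_R$ for the weighted estimates, and the vanishing of tail integrals via $u,\nabla u\in L^2(\Omega)$ and $u\in L^2(\partial\Omega)$. The only cosmetic difference is that for $\varepsilon_3$ the paper bounds by $cR^{-1}\norm{u}_{L^2(\partial\Omega)}$ over the whole boundary rather than restricting to $A_R\cap\partial\Omega$, which is immaterial.
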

\begin{proof}
By~\eqref{def:xiR} we have 
\begin{align*}
\norm{\widetilde{f}_R}_{L^2(\Omega)}
&\leq \norm{f_R}_{L^2(\Omega)} 
+ 2 \left(\int_\Omega \abs{\nabla u}^2 \abs{\nabla \xi_R}^2\right)^{1/2} 
+ \left(\int_\Omega \abs{u}^2 \abs{\Delta \xi_R}^2\right)^{1/2}
\\
&\leq \norm{f_R}_{L^2(\Omega)} 
+ \frac{2c}{R} \left(\int_{\{R<\abs{x}<2R\}} \abs{\nabla u}^2\right)^{1/2} 
+ \frac{c}{R^2} \left(\int_{\{R<\abs{x}<2R\}} \abs{u}^2\right)^{1/2}.  
\end{align*}
Since $u\in L^2(\R^n)$ and $\nabla u\in \big[L^2(\R^n)\big]^n$, 
the last two terms of the right-hand side tend to zero as~$R$ goes to infinity.
Similarly,
\begin{align*}
\norm{x \widetilde{f}_R}_{L^2(\Omega)}
&\leq \norm{x f_R}_{L^2(\Omega)} 
+ 2\left (\int_{\Omega} \abs{x}^2 \abs{\nabla u}^2 \abs{\nabla \xi_R}^2\right)^{1/2} 
+ \left(\int_{\Omega} \abs{x}^2 \abs{u}^2 \abs{\Delta \xi_R}^2\right)^{1/2}
\\
&\leq \norm{x f_R}_{L^2(\Omega)} 
+ 4c \left(\int_{\{R< \abs{x}<2R\}} \abs{\nabla u}^2\right)^{1/2} 
+ \frac{2c}{R}\left(\int_{\{R<\abs{x}<2R\}} \abs{u}^2\right)^{1/2},
\end{align*}
and again the last two terms go to zero as $R$ approaches infinity.
Furthermore,
\begin{equation*}
	\norm{\widetilde{g}_R}_{L^2(\partial \Omega)}^2\leq \frac{c^2}{R^2} \int_{\partial \Omega} \abs{u}^2,
\end{equation*}
and being $u\in L^2(\partial \Omega),$ 
the right-hand side tends to zero as~$R$ tends to infinity.
Finally,
\begin{equation*} 
	\norm{x\widetilde{g}_R}_{L^2(\partial \Omega)}^2\leq \int_{\partial \Omega} \abs{x}^2 \abs{u}^2 \abs{\nabla \xi_R}^2\leq 4c^2 
\int_{\partial \Omega\, \cap\, \{R<\abs{x}<2R\}} \abs{u}^2,
\end{equation*}
which again goes to zero as $R$ tends to infinity because $u\in L^2(\partial \Omega).$
	\end{proof}

Now we are in a position to show 
the total absence of eigenvalues of the Robin Laplacian.
Let us start by considering the self-adjoint situation.

\subsection{Self-adjoint setting: Proof of Theorem~\ref{thm:self-adjoint}}
\label{Sec.virial}
Let $-\Delta_\alpha^\Omega$ be self-adjoint, 
which is equivalent to requiring that
the boundary function~$\alpha$ is real-valued.
Consequently, the spectrum of $-\Delta_\alpha^\Omega$ is real,
in particular any eigenvalue $\lambda\in \R$.
Let $u \in \mathcal{D}(-\Delta_\alpha^\Omega)$ be any solution
to the eigenvalue equation $-\Delta_\alpha^\Omega u = \lambda u$. 
The strategy of our proof is to show that,
under the hypotheses of Theorem~\ref{thm:self-adjoint},
$u$~is necessarily identically zero.
We split the proof into two cases: 
$\lambda \geq 0$ and $\lambda<0$.

\subsubsection*{$\bullet$ Case $\lambda \geq 0.$}
The solution~$u$ belongs to $\widetilde{\mathcal{D}}$
and solves~\eqref{eq:Schr_half_plane} in the weak sense.
Then $u_R \in \widetilde{\mathcal{D}}$ solves~\eqref{eq:resolvent_approx} 
with $f_R=0$. 
Taking the difference~\eqref{eq:second_Omega}$-$\eqref{eq:first_Omega_const}, 
one gets
\begin{multline*}
	\int_{\Omega} \abs{\nabla u_R}^2\, dx + \lambda \int_{\Omega} \abs{u_R}^2\, dx + (n-1) \int_{\partial \Omega} \alpha \abs{u_R}^2\, d\sigma
	+2 \int_{\partial \Omega}\alpha x \cdot \Re(u_R \nabla \bar{u}_R)\, d\sigma\\
	=(n-1)\int_{\Omega} \widetilde{f}_R \bar{u}_R\, dx +2\Re \int_\Omega \widetilde{f}_R x\cdot \nabla \bar{u}_R\, dx + (n-1)\int_{\partial \Omega} \widetilde{g}_R \bar{u}\, d\sigma +2\Re \int_{\partial \Omega} \widetilde{g}_R x\cdot \nabla \bar{u}_R\, d\sigma.
\end{multline*}
An integration by parts in the last term on the first line yields
the crucial identity
\begin{multline}\label{virial}
	\int_{\Omega} \abs{\nabla u_R}^2\, dx + \lambda \int_{\Omega} \abs{u_R}^2\, dx - \int_{\partial \Omega} (x \cdot \nabla \alpha) \abs{u_R}^2\, d\sigma\\
	=(n-1)\int_{\Omega} \widetilde{f}_R \bar{u}_R\, dx +2\Re \int_\Omega \widetilde{f}_R x\cdot \nabla \bar{u}_R\, dx\\ + (n-1)\int_{\partial \Omega} \widetilde{g}_R \bar{u}_R\, d\sigma +2\Re \int_{\partial \Omega} \widetilde{g}_R x\cdot \nabla \bar{u}_R\, d\sigma.
\end{multline}
Now we want to pass to the limit $R\to \infty$ in this identity.

We start by estimating the terms on the second line of~\eqref{virial}.
Let us set
\begin{equation*}
  I_1 := (n-1) \Re \int_{\Omega} \widetilde{f}_R \bar{u}_{R}
  \qquad \mbox{and} \qquad
  I_2 := 2 \Re \int_{\Omega} \widetilde{f}_R\, x\cdot \nabla \bar{u}_{R}.
\end{equation*}
Using the Cauchy-Schwarz inequality and the estimates in Lemma~\ref{lemma:fg} 
(here $f=0$), we have
\begin{equation*}
	\begin{split}
	\abs{I_1}&\leq(n-1)\norm{\widetilde{f}_R}_{L^2(\Omega)} \norm{u_{R}}_{L^2(\Omega)}\leq (n-1) \varepsilon_1(R) \norm{u_{R}}_{L^2(\Omega)},\\
	\abs{I_2}&\leq 2 \norm{x \widetilde{f}_R}_{L^2(\Omega)}\norm{\nabla u_{R}}_{L^2(\Omega)}\leq 2 \varepsilon_2(R) \norm{\nabla u_{R}}_{L^2(\Omega)}.	\end{split}
\end{equation*}
Consequently, by the dominated convergence theorem and~\eqref{limit_R},
we see that~$I_1$ and~$I_2$ vanish as $R \to \infty$. 

For the terms on the third line of~\eqref{virial}, we write
\begin{equation*}
  I_3 := (n-1)\int_{\partial \Omega} \widetilde{g}_R \bar{u}_R\, d\sigma 
  \qquad \mbox{and} \qquad
  I_4 := 2\Re \int_{\partial \Omega} \widetilde{g}_R x\cdot \nabla \bar{u}_R\, d\sigma.
\end{equation*}
Making again use of the Cauchy-Schwarz 
and of the estimates in Lemma~\ref{lemma:fg}, 
we obtain
\begin{equation*}
	\begin{split}
	\abs{I_3}&\leq (n-1) \norm{\widetilde{g}_R}_{L^2(\partial \Omega)} \norm{u_{R}}_{L^2(\partial \Omega)}\leq (n-1) \varepsilon_3(R) \norm{u_{R}}_{L^2(\partial \Omega)},\\
	\abs{I_4}&\leq 2 \norm{x \widetilde{g}_R}_{L^2(\partial \Omega)} \norm{\nabla u_{R}}_{L^2(\partial \Omega)}\leq 2 \varepsilon_4(R) \norm{\nabla u_{R}}_{L^2(\partial \Omega)}.
	\end{split}
\end{equation*}
Notice that $u \in \widetilde{\mathcal{D}}$ guarantees that 
the boundary traces $\gamma_D u$ and $\gamma_D \nabla u$
belong to $L^2(\partial \Omega)$.
Consequently, using again the dominated convergence theorem and~\eqref{limit_R},
we see that also~$I_3$ and~$I_4$ vanish as $R \to \infty$. 

Summing up, sending $R \to \infty$ in~\eqref{virial},
the right-hand side of the identity tends to zero.
Using additionally the dominated convergence theorem for 
the first two terms on the left-hand side 
and the monotone convergence theorem for the third term
(recall~\eqref{hp:repulsivity}),
we finally get the virial-type identity
\begin{equation*}
	\int_{\Omega} \abs{\nabla u}^2\, dx + \lambda \int_\Omega \abs{u}^2\, dx - \int_{\partial \Omega} (x\cdot \nabla \alpha)\abs{u}^2\, d\sigma=0.
\end{equation*}
Being $\lambda \geq 0$ 
and using hypothesis~\eqref{hp:repulsivity}, 
we obtain $\nabla u=0$. 
Consequently, $u=0$.

\subsubsection*{$\bullet$ Case $\lambda < 0.$}
Identity~\eqref{eq:first_Omega_const} reads
\begin{equation*}
	\int_{\Omega} \abs{\nabla u_R}^2 \, dx + \int_{\partial \Omega} \alpha \abs{u_R}^2 \, d\sigma= \lambda \int_{\Omega} \abs{u_R}^2\, dx + \Re \int_{\Omega} \widetilde{f}_R \bar{u}_R\, dx + \Re \int_{\partial \Omega} \widetilde{g}_R \bar{u}_R\, d\sigma.
\end{equation*}
Sending $R\to \infty$ 
and estimating the terms involving $\widetilde{f}_R$ and $\widetilde{g}_R$ 
as~$I_1$ and~$I_3$ above,
the dominated convergence theorem yields
\begin{equation*}
	\int_{\Omega} \abs{\nabla u}^2 \, dx + \int_{\partial \Omega} \alpha \abs{u}^2 \, d\sigma= \lambda \int_{\Omega} \abs{u}^2\, dx.
\end{equation*}
(In fact, this identity is just $h_\alpha[u]=\lambda \|u\|_{L^2(\Omega)}^2$,
the passage through the compactly supported regularisation 
was not necessary in this case.)
Being $\lambda < 0$ and using the hypothesis~\eqref{hp:self-adjoint}, 
we again obtain $u=0.$ 

\medskip

This concludes the proof of Theorem~\ref{thm:self-adjoint}.
\qed

\medskip
Now we turn to the non-self-adjoint situation.


\subsection{Non-self-adjoint setting: Auxiliary results}

Before moving on in the proof of 
Theorems~\ref{thm:non_self-adjoint} and~\ref{thm:non_self-adjoint_h}, 
we first present some preliminary results that will be used in the sequel.

To begin with, we recall the following Hardy inequalities on the half-space.

\begin{lemma}\label{lemma:Hardy_inequalities}
Let $n\geq 3$. 
For every $\psi \in H^1(\R^n)$, one has
\begin{align}
\int_{\Omega} \frac{\abs{\psi}^2}{\abs{x}^2}\, dx 
&\leq \frac{4}{(n-2)^2}\int_{\Omega} \abs{\nabla \psi}^2\, dx, 
\label{eq:Hardy}
\\
\int_{\Omega} \frac{~\abs{\psi}^2}{\abs{x}}\, dx 
&\leq \frac{4}{(n-1)^2}\int_{\Omega} \abs{x} \abs{\nabla \psi}^2\, dx. 
\label{eq:weighted_Hardy}
\end{align}
(Inequality~\eqref{eq:weighted_Hardy} holds also for $n=2$.)
\end{lemma}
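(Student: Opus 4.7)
Both inequalities admit the same proof strategy, which follows the classical one-variable argument for Hardy's inequality adapted to the half-space. The key geometric observation is that the outward unit normal to $\partial\Omega$ is $\eta=(0,\dots,0,-1)$, so for any radial-type vector field of the form $F(x)=f(|x|)\,x$ one has $F\cdot\eta|_{\partial\Omega}=-f(|x'|)\,x_n|_{x_n=0}=0$. This means the boundary contribution in integration-by-parts will vanish automatically, allowing us to mimic the whole-space proof verbatim.

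First I would prove~\eqref{eq:Hardy} for $\psi\in C_c^\infty(\overline{\Omega})$ vanishing in a neighbourhood of the origin. The starting identity is
\begin{equation*}
  \div\!\left(\frac{x}{|x|^2}\right)=\frac{n-2}{|x|^2}
  \qquad\text{in }\R^n\setminus\{0\}.
\end{equation*}
Multiplying by $|\psi|^2$, integrating over $\Omega$ and applying the divergence theorem yields
\begin{equation*}
  (n-2)\int_\Omega \frac{|\psi|^2}{|x|^2}\,dx
  = -\int_\Omega \frac{x}{|x|^2}\cdot\nabla|\psi|^2\,dx
  + \int_{\partial\Omega}\frac{x\cdot\eta}{|x|^2}|\psi|^2\,d\sigma,
\end{equation*}
and the boundary integral vanishes by the observation above. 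A Cauchy--Schwarz estimate applied to $-2\Re\!\int_\Omega |x|^{-2}x\cdot\bar\psi\nabla\psi\,dx$ produces the factor $(n-2)^2/4$ after dividing through. An analogous argument with the vector field $x/|x|$, whose divergence equals $(n-1)/|x|$ for $n\geq 2$, gives~\eqref{eq:weighted_Hardy}; here the Cauchy--Schwarz step is performed against the weights $|x|^{-1/2}$ and $|x|^{1/2}|\nabla\psi|$.

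The last step is to extend the inequalities from the dense class of smooth, compactly supported functions vanishing near the origin to arbitrary $\psi\in H^1(\R^n)$. This is standard: one approximates $\psi$ in $H^1(\R^n)$ by $\psi_\varepsilon:=(1-\chi_\varepsilon)\psi$ where $\chi_\varepsilon$ is a smooth cutoff supported near $0$, noting that in dimension $n\geq 3$ one has $\||x|^{-1}\chi_\varepsilon\psi\|_{L^2(\Omega)}\to 0$ as $\varepsilon\to 0$ by dominated convergence (and analogously with weight $|x|^{-1/2}$ in~\eqref{eq:weighted_Hardy}); then a further mollification combined with multiplication by a smooth radial cutoff at infinity completes the density argument. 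Passing to the limit in both sides, using monotone/dominated convergence on the left and continuity of the $H^1$-norm on the right, yields the inequalities for every $\psi\in H^1(\R^n)$.

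The only delicate point is the density argument near the origin in dimension $n=2$ for~\eqref{eq:weighted_Hardy}, since the weight $|x|^{-1}$ is locally non-integrable. However, for $\psi\in H^1(\R^2)$ the product $|x|^{-1/2}\psi$ is already in $L^2_\mathrm{loc}$ by the standard Hardy-type embedding, so the cut-off truncation $(1-\chi_\varepsilon)\psi$ converges to $\psi$ in the weighted space $L^2(\Omega,|x|^{-1}dx)$, and the argument goes through. Everything else is a routine transcription of the classical proof, enabled entirely by the vanishing boundary term.
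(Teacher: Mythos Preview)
Your proof is correct and follows essentially the same route as the paper: both arguments reduce to smooth compactly supported functions vanishing near the origin, exploit the geometric fact that $x\cdot\eta=0$ on $\partial\Omega$ so the boundary term from integration by parts disappears, and then conclude by the standard Hardy computation. The only cosmetic difference is that the paper uses the ``completing the square'' presentation (expanding $\big|\nabla\psi+\tfrac{n-2}{2}\tfrac{x}{|x|^2}\psi\big|^2\geq 0$) rather than your divergence-identity-plus-Cauchy--Schwarz version, and it handles density in one line by invoking $H^1(\R^n)=H^1_0(\R^n\setminus\{0\})$ for $n\geq 2$.
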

\begin{proof}
The inequalities are well known in the case $\Omega = \R^n$.
For the sake of completeness, we show that the standard proof
extends to the present situation of~$\Omega$ being the half-space.
Noticing that if $n\geq 2$ then $H^1(\R^n)=H^1_0(\R^n\setminus \{0\})$,
it is enough to prove the inequalities 
for functions $\psi\in C^{\infty}_0(\R^n\setminus\{0\})$.
Then one has
	\begin{equation*}
		\begin{split}
		\abs*{\nabla \psi + \frac{n-2}{2}\frac{\psi}{\abs{x}}\frac{x}{\abs{x}}}^2&=\abs{\nabla \psi}^2 + \frac{(n-2)^2}{4}\frac{\abs{\psi}^2}{\abs{x}^2} + \frac{n-2}{2}\, 2 \Re(\bar{\psi} \nabla \psi)\cdot \frac{x}{~\abs{x}^2}\\
		&=\abs{\nabla \psi}^2 + \frac{(n-2)^2}{4} \frac{\abs{\psi}^2}{\abs{x}^2} + \frac{n-2}{2}\, \nabla \abs{\psi}^2 \cdot \frac{x}{~\abs{x}^2}.
		\end{split}
	\end{equation*}
Reintegrating the previous identity over~$\Omega$ 
and integrating by parts, we get
	\begin{equation*}
		\int_{\Omega} \abs*{\nabla \psi + \frac{n-2}{2}\frac{\psi}{\abs{x}}\frac{x}{\abs{x}}}^2\, dx=\int_\Omega \abs{\nabla \psi}^2\, dx - \frac{(n-2)^2}{4}  \int_\Omega \frac{\abs{\psi}^2}{\abs{x}^2}\, dx  + \frac{(n-2)}{2} \int_{\partial \Omega}  \frac{\abs{\psi}^2}{\abs{x}^2} x \cdot \eta\, d\sigma.
	\end{equation*}
Since the left-hand side is positive 
and observing that $x\cdot \eta=0$ on $\partial \Omega$,
we arrive at~\eqref{eq:Hardy}. 
Inequality~\eqref{eq:weighted_Hardy} can be proved similarly.
\end{proof}

Given any complex number~$\lambda$ 
and complex-valued functions 
$\alpha \in L^\infty(\partial\Omega)$ and $f \in L^2(\Omega)$, 
let us now consider solutions $u \in \mathcal{D}(-\Delta_\alpha^\Omega)$
to the resolvent equation $(-\Delta_\alpha^\Omega-\lambda)u=f$.
The function~$u$ solves the boundary-value problem
\begin{equation}\label{eq:resolvent}
	\begin{system}
		-\Delta u&= \lambda u +f \quad& \text{in}\quad \Omega,\\
		-u_{x_{n}} + \alpha u&=0 &\text{on}\quad \partial \Omega,
	\end{system}
\end{equation}
in the weak sense of~\eqref{eq:weak_resolvent_gen} with $g=0$.	
The following lemma represents the crucial tool in this section.
\begin{lemma}\label{crucial_identity}
Let $n\geq 2$
and assume that $u\in \widetilde{\mathcal{D}}$ 
is a weak solution to~\eqref{eq:resolvent}
with $0<\abs{\Im \lambda}\leq \Re \lambda.$ 
Then $\abs{x}\abs{\nabla u^-}^2\in L^1(\Omega)$ and the inequality
	\begin{multline}\label{eq:crucial_identity}
		\int_{\Omega} \abs{\nabla u^-}^2\, dx + \frac{n-3}{n-1}(\Re \lambda)^{-1/2} \abs{\Im \lambda} \int_{\Omega} \abs{x} \abs{\nabla u^-}^2\, dx\\
		+(n-1)\int_{\partial \Omega} \Re \alpha \abs{u}^2\, d\sigma + 2 \Re \int_{\partial \Omega} x \, \alpha u^- \cdot \overline{\nabla u^-}\, d\sigma + (\Re \lambda)^{-{1/2}} \abs{\Im \lambda} \int_{\partial \Omega} \abs{x} \Re \alpha \abs{u}^2\, d\sigma\\
\leq		
(n-1) \Re \int_{\Omega} f \bar{u}\, dx + 2 \Re \int_{\Omega} x f^-\cdot \overline{\nabla u^-}\, dx + (\Re \lambda)^{-{1/2}} \abs{\Im \lambda} \Re \int_{\Omega} f \abs{x} \bar{u}\, dx
	\end{multline}
holds true with $u^-$ being defined as in~\eqref{eq:u^-} 
and $f^-(x):=e^{-i(\Re \lambda)^{1/2} \sgn(\Im \lambda) \abs{x}} f(x).$
\end{lemma}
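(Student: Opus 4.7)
My plan is to apply the five identities of Lemma~\ref{lemma:5identities} to the horizontally truncated sequence $u_R$ from Section~\ref{Sec.approx}, take a carefully chosen real linear combination so that (after the modulation $u = e^{i\mu|x|} u^-$ with $\mu:=(\Re\lambda)^{1/2}\sgn(\Im\lambda)$) the $\lambda$-dependent volume contributions reassemble into a manifestly controllable expression, and finally send $R\to\infty$ using Lemma~\ref{lemma:fg} together with Fatou and dominated convergence.

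First, for $u_R\in\widetilde{\mathcal{D}}_0$ solving~\eqref{eq:resolvent_approx}, I would form the combination of the type
\begin{equation*}
\bigl[\eqref{eq:second_Omega}-\eqref{eq:first_Omega_const}\bigr]
+(\Re\lambda)^{-1/2}|\Im\lambda|\,\eqref{eq:first_Omega_x}
-2\mu\,\eqref{eq:first_Omega_im_x},
\end{equation*}
possibly with an extra multiple of~\eqref{eq:first_Omega_im_const}. The rationale is that $\eqref{eq:second_Omega}-\eqref{eq:first_Omega_const}$ generates the $\int|\nabla u|^2$, the boundary block with coefficients $(n-1)$ and $2$, and the non-weighted RHS $(n-1)\Re\!\int f\bar u$; while the $\lambda$-weighted multiples of~\eqref{eq:first_Omega_x} and~\eqref{eq:first_Omega_im_x} produce the $|x|$-weighted counterparts. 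The coefficient $-2\mu$ in front of \eqref{eq:first_Omega_im_x} is chosen so that $\mu\,\Im\lambda=(\Re\lambda)^{1/2}|\Im\lambda|\geq 0$ and so that the cross terms from the modulation cancel, which will be crucial in the next step.

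Next I would rewrite the combined identity in terms of $u^-$ and $f^-$ via
\begin{align*}
|u|^2 &= |u^-|^2, \\
|\nabla u|^2 &= |\nabla u^-|^2 + \mu^2|u^-|^2 + 2\mu\,\Im\!\bigl(\tfrac{x}{|x|}\cdot\bar u^-\nabla u^-\bigr), \\
\Im(x\cdot\bar u\,\nabla u) &= -\mu\,|x|\,|u^-|^2 + \Im(x\cdot\bar u^-\nabla u^-),
\end{align*}
and the analogous relations for $f\bar u$ and $f\,x\cdot\nabla\bar u$. With the coefficients chosen above, the $\mu$-linear cross terms produced by $|\nabla u|^2$ and by $\Im(x\cdot\bar u\,\nabla u)$ cancel, the quadratic $\mu^2\!\int|u^-|^2$ coming from $|\nabla u|^2$ compensates the $\Re\lambda\!\int|u|^2$ contribution of $\eqref{eq:first_Omega_const}$, and what survives on the left-hand side is exactly $\int|\nabla u^-|^2$, the unwanted singular remainder $-\tfrac{n-1}{2}(\Re\lambda)^{-1/2}|\Im\lambda|\int|u^-|^2/|x|$ inherited from \eqref{eq:first_Omega_x}, the weighted $\int|x||\nabla u^-|^2$, and the three boundary terms involving $\alpha$ that match the statement. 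Applying the weighted Hardy inequality~\eqref{eq:weighted_Hardy} to $u^-$ (legitimate because $|u|=|u^-|$) absorbs the singular remainder: precisely $1-\tfrac{2}{n-1}=\tfrac{n-3}{n-1}$ of the $|x|$-weighted gradient integral remains, producing the coefficient in~\eqref{eq:crucial_identity}.

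Finally I would pass to the limit $R\to\infty$. The contributions of $\widetilde f_R$ and $\widetilde g_R$ vanish by Lemma~\ref{lemma:fg} combined with Cauchy--Schwarz, using that $u\in\widetilde{\mathcal{D}}$ already guarantees $u,\nabla u\in L^2(\partial\Omega)$ (Lemma~\ref{lemma:Neumann_3/2epsilon}); the $\alpha$-boundary terms converge by dominated convergence since $\alpha\in L^\infty$; the non-negative term $\int|x||\nabla u^-_R|^2$ is handled by Fatou, which simultaneously yields the integrability claim $|x||\nabla u^-|^2\in L^1(\Omega)$ and turns the identity into the inequality~\eqref{eq:crucial_identity}. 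The main obstacle, as I see it, is pinning down the precise linear combination of the five identities that simultaneously achieves (i) cancellation of all $\mu$-linear cross terms under the modulation, (ii) matching of the $\lambda$-independent boundary coefficients $(n-1)$ and $2$, and (iii) production of the Hardy-absorbable $-\tfrac{n-1}{2}\int|u^-|^2/|x|$ in exactly the right proportion to leave the clean $(n-3)/(n-1)$; once this combination is identified, the remaining algebra and limit passage are routine.
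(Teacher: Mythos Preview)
Your proposal is correct and follows essentially the same route as the paper: the linear combination you write down, $[\eqref{eq:second_Omega}-\eqref{eq:first_Omega_const}]+(\Re\lambda)^{-1/2}|\Im\lambda|\,\eqref{eq:first_Omega_x}-2\mu\,\eqref{eq:first_Omega_im_x}$, is exactly the one the paper uses (no extra multiple of~\eqref{eq:first_Omega_im_const} is needed), and the subsequent steps---rewriting via the modulation identity~\eqref{eq:square_nabla_u^-}, absorbing the $-\tfrac{n-1}{2}\int|u|^2/|x|$ term by the weighted Hardy inequality~\eqref{eq:weighted_Hardy} to produce the coefficient $\tfrac{n-3}{n-1}$, and passing to the limit with Lemma~\ref{lemma:fg}---coincide with the paper's argument. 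The only cosmetic difference is that the paper invokes monotone convergence rather than Fatou for the non-negative weighted gradient term, which of course yields the same conclusion.
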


\begin{proof}
To begin with, we consider the compactly supported approximating sequence~$u_R$ 
as introduced in Section~\ref{Sec.approx}.
Then the proof follows after a precise combination of the identities contained in Lemma~\ref{lemma:5identities} once they are re-stated in terms of $u_R, \widetilde{f}_R$ and $\widetilde{g}_R,$ where $\widetilde{f}_R$ and $\widetilde{g}_R$ are defined in~\eqref{eq:error_terms}. 
 
Let us start our algebraic manipulations by taking the sum 
\begin{equation*}
-\,\eqref{eq:first_Omega_const}
\,-	\, 2(\Re\lambda)^{1/2} \sgn(\Im\lambda) \eqref{eq:first_Omega_im_x} 
\, + \, \eqref{eq:second_Omega}.
\end{equation*}
This gives
\begin{multline}\label{eq:intermediate}
	\hspace{2.5cm}\int_{\Omega} \abs{\nabla u_R}^2\, dx -2 (\Re\lambda)^\frac{1}{2} \sgn(\Im\lambda) \Im \int_{\Omega} \frac{x}{\abs{x}} \cdot (\bar{u}_R\,\nabla u_R)\, dx + \Re \lambda \int_{\Omega} \abs{u_R}^2\, dx\\
	+2(\Re \lambda)^{1/2} \abs{\Im \lambda} \int_{\Omega} \abs{x} \abs{u_R}^2\, dx + 2 \Im\lambda \Im \int_{\Omega} x \cdot (u_R\, \nabla \bar{u}_R)\, dx\\
	+(n-1)\int_{\partial \Omega} \Re \alpha \abs{u_R}^2\, d\sigma + 2 \Re \int_{\partial \Omega} (x \alpha u)\cdot \nabla \bar{u}_R\, d\sigma
	-2(\Re \lambda)^{{1/2}} \sgn(\Im \lambda) \int_{\partial \Omega} \abs{x} \Im \alpha \abs{u_R}^2\, d\sigma\\
	= (n-1)\Re \int_{\Omega} \widetilde{f}_R \bar{u}_R\, dx + 2 \Re \int_{\Omega} \widetilde{f}_R x\cdot \nabla \bar{u}_R\, dx - 2(\Re \lambda)^{{1/2}} \sgn(\Im\lambda) \Im \int_{\Omega} \abs{x} \widetilde{f}_R \bar{u}_R\, dx\\
	(n-1)\Re \int_{\partial \Omega} \widetilde{g}_R \bar{u}_R\, d\sigma + 2 \Re \int_{\partial \Omega} \widetilde{g}_R x\cdot \nabla \bar{u}_R\, d\sigma - 2(\Re \lambda)^{{1/2}} \sgn(\Im\lambda) \Im \int_{\partial \Omega} \abs{x} \widetilde{g}_R \bar{u}_R\, d\sigma. 
\end{multline}
Recalling the definition~\eqref{eq:u^-}, one observes that
\begin{equation}\label{nabla_u^-}
	\nabla u_R^-(x)= e^{-i (\Re \lambda)^{1/2} \sgn(\Im \lambda) \abs{x}} \left(\nabla u_R - i (\Re \lambda)^\frac{1}{2} \sgn(\Im \lambda) \frac{x}{\abs{x}} u_R(x)\right),
\end{equation}
and therefore
\begin{equation}\label{eq:square_nabla_u^-}
	\abs{\nabla u_R^-}^2= \abs{\nabla u_R}^2 + \Re \lambda \abs{u_R}^2 - 2 (\Re \lambda)^{1/2} \sgn(\Im\lambda) \Im \left( \frac{x}{\abs{x}} \bar{u}_R\nabla u_R\right). 
\end{equation}
Reintegrating~\eqref{eq:square_nabla_u^-} over~$\Omega$,
we obtain
\begin{equation}\label{eq:nabla_u^-}
	\int_{\Omega} \abs{\nabla u_R}^2\, dx -2 (\Re\lambda)^{1/2} \sgn(\Im\lambda) \Im \int_{\Omega} \frac{x}{\abs{x}} \cdot (\bar{u}_R\,\nabla u_R)\, dx + \Re \lambda \int_{\Omega} \abs{u_R}^2\, dx=\int_{\Omega} \abs{\nabla u_R^-}^2\, dx.
\end{equation}
Adding equation~\eqref{eq:first_Omega_x} 
multiplied by $(\Re \lambda)^{-{1/2}} \abs{\Im \lambda}$ to~\eqref{eq:intermediate}, plugging~\eqref{eq:nabla_u^-} and using again~\eqref{eq:square_nabla_u^-}, 
we get
\begin{multline}\label{eq:not_yet_minus}
	\qquad \qquad \int_{\Omega} \abs{\nabla u_R^-}^2\, dx + (\Re\lambda)^{-{1/2}} \abs{\Im \lambda} \int_{\Omega} \abs{x} \abs{\nabla u_R^-}^2\, dx -\frac{n-1}{2} (\Re \lambda)^{-{1/2}} \abs{\Im \lambda} \int_{\Omega} \frac{~\abs{u_R}^2}{\abs{x}}\, dx\\
	+(n-1)\int_{\partial \Omega} \Re \alpha \abs{u_R}^2\, d\sigma
	+(\Re \lambda)^{-{1/2}} \abs{\Im \lambda} \int_{\partial \Omega} \abs{x} \Re \alpha \abs{u_R}^2\, d\sigma
	\\
	+ 2 \Re \int_{\partial \Omega} (x\, \alpha\, u_R)\cdot \nabla\, \bar{u}_R\, d\sigma
	-2(\Re \lambda)^{{1/2}} \sgn(\Im \lambda) \Im \int_{\partial \Omega} \abs{x} \alpha \abs{u_R}^2\, d\sigma\\
	= (n-1)\Re \int_{\Omega} \widetilde{f}_R \bar{u}_R\, dx 
	+(\Re \lambda)^{-{1/2}} \abs{\Im \lambda} \Re \int_{\Omega} \widetilde{f}_R \abs{x} \bar{u}_R\, dx\\
	+ 2 \Re \int_{\Omega} \widetilde{f}_R x\cdot \overline{\nabla u_R}\, dx -2(\Re \lambda)^{1/2}\sgn(\Im\lambda) \Im \int_{\Omega} \abs{x} \widetilde{f}_R \bar{u}_R\, dx\\
		+(n-1)\Re \int_{\partial \Omega} \widetilde{g}_R \bar{u}_R\, d\sigma
		+(\Re \lambda)^{-{1/2}} \abs{\Im \lambda} \Re \int_{\partial \Omega} \widetilde{g}_R \abs{x} \bar{u}_R\, d\sigma\\
	+ 2 \Re \int_{\partial \Omega} \widetilde{g}_R x\cdot \overline{\nabla u_R}\, d\sigma 
	-2(\Re \lambda)^{1/2} \sgn(\Im \lambda) \Im \int_{\partial \Omega} \abs{x} \widetilde{g}_R \bar{u}_R\, d\sigma. \hspace{2.5cm}
\end{multline}
In order to estimate the first line of~\eqref{eq:not_yet_minus}, we use the weighted Hardy inequality~\eqref{eq:weighted_Hardy}. Then from~\eqref{eq:weighted_Hardy} and the fact that $\abs{u_R}=\abs{u_R^-}$ we get
\begin{multline}\label{l-h-s}
	\hspace{2cm}\int_{\Omega} \abs{\nabla u_R^-}^2\, dx +  (\Re \lambda)^{-\frac{1}{2}}\abs{\Im \lambda} \int_{\Omega} \abs{x} \abs{\nabla u_R^-}^2\, dx - \frac{(n-1)}{2}(\Re \lambda)^{-\frac{1}{2}}  \abs{\Im \lambda} \int_{\Omega} \frac{\abs{u_R}^2}{\abs{x}}\, dx\\ \geq \int_{\Omega} \abs{\nabla u_R^-}^2\, dx + \frac{n-3}{n-1}  (\Re \lambda)^{-\frac{1}{2}} \abs{\Im \lambda} \int_{\Omega} \abs{x} \abs{\nabla u_R^-}^2\, dx. 
\end{multline}
In order to deal with the third line of~\eqref{eq:not_yet_minus}, 
we recall~\eqref{nabla_u^-} and write
\begin{equation}\label{eq:J_potential}
  2 \Re \int_{\partial \Omega} (x\, \alpha\, u_R)\cdot \nabla\, \bar{u}_R\, d\sigma
	-2(\Re \lambda)^{{1/2}} \sgn(\Im \lambda) \Im \int_{\partial \Omega} \abs{x} \alpha \abs{u_R}^2\, d\sigma
  = 2 \Re \int_{\partial \Omega} x \alpha u_R^-\cdot \overline{\nabla u_R^-}\, d\sigma.
\end{equation}
Plugging~\eqref{l-h-s} and~\eqref{eq:J_potential} in~\eqref{eq:not_yet_minus} and using again~\eqref{nabla_u^-} also to treat the terms involving $\widetilde{f}_R$ in the last but three line of~\eqref{eq:not_yet_minus} and the terms involving $\widetilde{g}_R$ in the last line of~\eqref{eq:not_yet_minus}, one gets 
\begin{multline}\label{eq:last_but_one}
	\int_{\Omega} \abs{\nabla u_R^-}^2\, dx + \frac{n-3}{n-1} (\Re \lambda)^{-{1/2}} \abs{\Im \lambda} \int_{\Omega} \abs{x} \abs{\nabla u_R^-}^2\, dx\\
	+(n-1)\int_{\partial \Omega} \Re \alpha \abs{u_R}^2\, d\sigma
	+ 2 \Re \int_{\partial \Omega} x \alpha u_R^- \cdot \overline{\nabla u_R^-}\, d\sigma
	+(\Re \lambda)^{-{1/2}} \abs{\Im \lambda} \int_{\partial \Omega} \abs{x} \Re \alpha \abs{u_R}^2\, d\sigma\\
	\leq (n-1)\Re \int_{\Omega} \widetilde{f}_R \bar{u}_R\, dx 
	+ 2 \Re \int_{\Omega} \widetilde{f}_R^- x\cdot \overline{\nabla u_R^-}\, dx
		+(\Re \lambda)^{-{1/2}} \abs{\Im \lambda} \Re \int_{\Omega} \widetilde{f}_R \abs{x} \bar{u}_R\, dx\\
		(n-1)\Re \int_{\partial \Omega} \widetilde{g}_R \bar{u}_R\, d\sigma 
	+ 2 \Re \int_{\partial \Omega} \widetilde{g}_R^- x\cdot \overline{\nabla u_R^-}\, d\sigma
		+(\Re \lambda)^{-{1/2}} \abs{\Im \lambda} \Re \int_{\partial \Omega} \widetilde{g}_R \abs{x} \bar{u}_R\, d\sigma.
\end{multline}
Estimating the right-hand side of~\eqref{eq:last_but_one}
by means of the Cauchy-Schwarz inequality and Lemma~\ref{lemma:fg}, 
and letting~$R$ go to infinity with help of the monotone and dominated convergence theorems, one gets the thesis. 
\end{proof}
\begin{remark}
Using the identities of Remark~\ref{rmk:1d_identities}, 
one can easily check that if $n=1$,
then~\eqref{eq:not_yet_minus}
reads (after using~\eqref{nabla_u^-} for the term involving $\widetilde{f}_R$ and $\widetilde{g}_R$ and passing to the limit $R \to \infty$) 
\begin{multline*}
\int_0^\infty \abs*{\frac{d}{dx}u^-}^2\, dx 
+ (\Re \lambda)^{-{1/2}} \abs{\Im \lambda}\int_0^\infty x \abs*{\frac{d}{dx}u^-}^2\, dx 
-\frac{1}{2}(\Re\lambda)^{-{1/2}} \abs{\Im \lambda} \abs{u}^2(0)
\\
=2\Re \int_0^\infty x f^- \overline{\frac{d}{dx}u^-}\, dx
+ (\Re \lambda)^{-{1/2}}\abs{\Im\lambda} \Re \int_0^\infty f x \bar{u}.
		\end{multline*}
Hence, all the boundary terms appearing 
in~\eqref{eq:not_yet_minus} disappear,
but there shows up a new boundary term, namely 
$-\frac{1}{2} (\Re \lambda)^{-{1/2}}\abs{\Im \lambda}\abs{u}^2(0)$,
coming from~\ref{eq:negative_term}. 
The negative sign of this extra term does not allow 
to get in $n=1$ the same result as the one stated in 
Theorem~\ref{thm:non_self-adjoint} obtained in higher dimensions $n \geq 3$.
\end{remark}

\begin{remark}\label{Rem.Eidus}
Let us underline that in the non-self-adjoint situation the introduction of the auxiliary function~$u^-$ is crucial in order to guarantee the absence of eigenvalues also inside the cone $\{\lambda \in \C \ | \, \abs{\Im \lambda}<\Re \lambda\}$.
		
As far as we know, the first appearance of~$u^-$ in the literature goes back to the work by Eidus~\cite{Eidus_1962} in 1962 concerning the electromagnetic Helmholtz equation

		\begin{equation}\label{eq:Eidus}
			(\nabla + iA(x))^2u + V(x)u + \lambda u=f(x),\quad x\in \R^3,
		\end{equation}
where $V:\R^n\to\R$ and $A:\R^n\to\R^n$.
		Here the author, studying deeply the equation~\eqref{eq:Eidus} 
in the regime $\Im \lambda=0$ where no uniqueness is guaranteed in general, 
singled out among the solutions the one that is the most interesting from a physical point of view. More specifically, he looked for a solution satisfying the so-called Sommerfeld radiation conditions
		\begin{equation}\label{rad_cond}			
			\lim_{r\to \infty} \int_{\{\abs{x}=r\}} \abs*{\nabla u - i (\Re \lambda)^{1/2} \frac{x}{\abs{x}} u}^2\, d\sigma=0,
		\end{equation}
		whose physical meaning is the absence of any sources of oscillation at infinity.
Notice that~\eqref{rad_cond} is nothing but requiring that
		\begin{equation*}			
			\lim_{r\to \infty} \int_{\{\abs{x}=r\}} \abs*{\nabla u^-}^2\, d\sigma=0.
		\end{equation*}
In 1972, aligned with Eidus, Ikebe and Saito devoted 
the greater part of their work~\cite{Ikebe-Saito_1972} 
to the justification of the limit absorption principle for~\eqref{eq:Eidus}, 
which followed from the proof of suitable \emph{a priori} estimates 
for the selected solution~$u^-.$  The main step forward of this work was that the authors, in order to get the aforementioned \emph{a priori} estimates for~$u^-,$  recognised the need to \emph{combine} in a suitable algebra identities resulting from \emph{both} the choice of ``symmetric'' and ``anti-symmetric'' multipliers 
(in the spirit of those used in the proof of Lemma~\ref{lemma:5identities}). 
This novelty in the method of multipliers paved 
the way for what it grew up to become  a baseline technique in the matter of resolvent estimates and which inspired a considerable literature on related topics 
in partial differential equations afterward.
Unfortunately, this improvement upon the commutator method 
(\emph{cf.}~Remark~\ref{Rem.Mourre})
does not seem to be well known in the spectral-theoretic community.
\end{remark}

We also give the following variant of Lemma~\ref{crucial_identity}.
\begin{lemma}\label{crucial_identity_h}
Let $n\geq 2$ and assume $u\in \widetilde{\mathcal{D}}$ 
is a weak solution to~\eqref{eq:resolvent} with $0<\abs{\Im \lambda}\leq \Re \lambda.$ Then $\abs{x} \abs{\nabla u^-}^2\in L^1(\Omega)$, 
$x\cdot \nabla \Re \alpha \abs{u}^2 \in L^1(\partial \Omega)$ 
and the inequality
		\begin{multline}\label{eq:crucial_identity_h}
		\int_{\Omega} \abs{\nabla u^-}^2\, dx 
+ \frac{n-3}{n-1}(\Re \lambda)^{-{1/2}} \abs{\Im \lambda} \int_{\Omega} \abs{x} \abs{\nabla u^-}^2\, dx
\\
		-\int_{\partial \Omega} x\cdot \nabla \Re \alpha \abs{u}^2\, d\sigma - 2 \Im \int_{\partial \Omega} x \, \Im \alpha u^- \cdot \overline{\nabla u^-}\, d\sigma + (\Re \lambda)^{-{1/2}} \abs{\Im \lambda} \int_{\partial \Omega} \abs{x} \Re \alpha \abs{u}^2\, d\sigma\\
		\leq (n-1) \Re \int_{\Omega} f \bar{u}\, dx + 2 \Re \int_{\Omega} x f^-\cdot \overline{\nabla u^-}\, dx + (\Re \lambda)^{-{1/2}} \abs{\Im \lambda} \Re \int_{\Omega} f \abs{x} \bar{u}\, dx
	\end{multline}
	holds true.
\end{lemma}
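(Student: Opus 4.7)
My plan is to follow the proof of Lemma~\ref{crucial_identity} verbatim up to the truncated inequality~\eqref{eq:last_but_one} for the compactly supported approximant $u_R$, and then, \emph{before passing to the limit} $R\to\infty$, perform an additional integration by parts on $\partial\Omega$ in order to rewrite the pair of $\alpha$-dependent boundary terms
\[
J_R := (n-1)\int_{\partial\Omega} \Re\alpha\,\abs{u_R}^2\, d\sigma + 2\Re\int_{\partial\Omega} x\,\alpha\, u_R^-\cdot\overline{\nabla u_R^-}\, d\sigma
\]
into the form that appears on the left-hand side of~\eqref{eq:crucial_identity_h}.

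The algebraic step is elementary. Splitting $\alpha=\Re\alpha+i\Im\alpha$ and using the polarisation identity $2\Re(u_R^-\,\overline{\nabla u_R^-})=\nabla\abs{u_R^-}^2=\nabla\abs{u_R}^2$, I obtain
\[
2\Re\int_{\partial\Omega} x\,\alpha\, u_R^-\cdot\overline{\nabla u_R^-}\, d\sigma
=\int_{\partial\Omega}\Re\alpha\,(x\cdot\nabla\abs{u_R}^2)\, d\sigma
-2\Im\int_{\partial\Omega} x\,\Im\alpha\, u_R^-\cdot\overline{\nabla u_R^-}\, d\sigma.
\]
Since $x_n=0$ on $\partial\Omega$, the operator $x\cdot\nabla$ reduces there to the tangential $x'\cdot\nabla'$; because $u_R$ has compact support and $\alpha\in W^{1,\infty}(\partial\Omega)$, integration by parts on $\R^{n-1}$ gives
\[
\int_{\partial\Omega}\Re\alpha\,(x\cdot\nabla\abs{u_R}^2)\, d\sigma
=-(n-1)\int_{\partial\Omega}\Re\alpha\,\abs{u_R}^2\, d\sigma
-\int_{\partial\Omega}(x\cdot\nabla\Re\alpha)\,\abs{u_R}^2\, d\sigma,
\]
so the first summand on the right cancels exactly the first piece of $J_R$. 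Plugging the resulting expression for $J_R$ back into the truncated analogue of~\eqref{eq:last_but_one} produces~\eqref{eq:crucial_identity_h} for $u_R$. The passage $R\to\infty$ is then handled exactly as in Lemma~\ref{crucial_identity}: the right-hand side is controlled by the Cauchy--Schwarz inequality combined with Lemma~\ref{lemma:fg}, while the two new boundary integrals are treated by the dominated convergence theorem, using $\alpha\in W^{1,\infty}(\partial\Omega)$ together with the trace inclusions $u,\nabla u\in L^2(\partial\Omega)$ inherited from $u\in\widetilde{\mathcal{D}}\subset H^{3/2}(\Omega)$ via Lemma~\ref{lemma:Neumann_3/2epsilon}.

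The main obstacle will be justifying the a~priori integrability $x\cdot\nabla\Re\alpha\,\abs{u}^2\in L^1(\partial\Omega)$ asserted in the statement, because $x\cdot\nabla\Re\alpha$ is unbounded in general. I intend to deduce this as a by-product of the estimate rather than assume it: the truncated inequality yields a uniform-in-$R$ upper bound on the quantity $-\int_{\partial\Omega}(x\cdot\nabla\Re\alpha)\abs{u_R}^2\, d\sigma$ in terms of already-known-finite quantities, and Fatou's lemma applied along the pointwise convergence $\abs{u_R}^2\to\abs{u}^2$ then furnishes the claimed $L^1$-membership, in exact analogy with the way $\abs{x}\abs{\nabla u^-}^2\in L^1(\Omega)$ is extracted in Lemma~\ref{crucial_identity}.
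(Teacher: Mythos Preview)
Your proposal is correct and follows essentially the same route as the paper: both start from the truncated inequality~\eqref{eq:last_but_one}, split the boundary term $2\Re\int_{\partial\Omega} x\,\alpha\,u_R^-\cdot\overline{\nabla u_R^-}$ into its $\Re\alpha$ and $\Im\alpha$ parts, integrate the $\Re\alpha$ piece by parts on $\partial\Omega$ to produce the cancellation of $(n-1)\int_{\partial\Omega}\Re\alpha\,|u_R|^2$, and then pass to the limit $R\to\infty$ via Cauchy--Schwarz plus Lemma~\ref{lemma:fg} on the right and monotone/dominated convergence on the left. Your treatment of the $L^1(\partial\Omega)$-membership of $x\cdot\nabla\Re\alpha\,|u|^2$ via a Fatou-type argument is slightly more explicit than the paper's brief invocation of the monotone convergence theorem, but both tacitly rely on the sign condition $x\cdot\nabla\Re\alpha\le 0$ from the ambient hypotheses of Theorem~\ref{thm:non_self-adjoint_h} (without which neither Fatou nor monotone convergence applies); this is consistent with how the lemma is actually used.
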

\begin{proof}
As a starting point we consider 
inequality~\eqref{eq:last_but_one} 
in the proof of Lemma~\ref{crucial_identity}. 
Defining
\begin{equation*}
  J := 
  2\Re \int_{\partial \Omega} (x \alpha u_R^-)\cdot \overline{\nabla u_R^-}\, d\sigma,
\end{equation*}
which appears on the third line 
of~\eqref{eq:last_but_one},
we write $J = J_1 + J_2$ with
\begin{equation*}
  J_1 := 2 \int_{\partial \Omega} (x\, \Re \alpha) \cdot \Re (u_R^-\, \overline{\nabla\, u_R^-})\, d\sigma 
\qquad \mbox{and} \qquad
  J_2 := - 2\Im \int_{\partial \Omega} x\, \Im \alpha\, u_R^- \cdot \overline{\nabla\, u_R^-}\, d\sigma. 
\end{equation*}
Integrating by parts in $J_1$ gives
\begin{equation*}
		J =-(n-1)\int_{\partial \Omega} \Re \alpha \abs{u_R}^2 \, d\sigma -\int_{\partial \Omega} x \cdot \nabla \Re \alpha\, \abs{u_R}^2\, d\sigma - 2\Im \int_{\partial \Omega} x\, \Im \alpha\, u_R^- \cdot \overline{\nabla\, u_R^-}\, d\sigma.
\end{equation*}
Using the latter in~\eqref{eq:last_but_one}
and estimating the right-hand side of the resulting identity 
by means of the Cauchy-Schwarz inequality and Lemma~\ref{lemma:fg}, 
then~\eqref{eq:crucial_identity_h} follows after passing 
to the limit $R\to \infty$ with help of
the monotone and dominated convergence theorems.
\end{proof}

The following homogeneous trace-type result 
will be also useful in the proof of 
Theorems~\ref{thm:non_self-adjoint} and~\ref{thm:non_self-adjoint_h}.
\begin{lemma}
Let $n\geq 1$ and $u\in H^1(\Omega)$.
Then the following estimate
		\begin{equation}\label{eq:homogeneous_trace}
			\norm{u(0)}_{\dot{H}^{1/2}(\R^{n-1})}\leq \norm{\nabla u}_{L^2(\Omega)}
		\end{equation}
		holds true,
where $u(0)$ 
denotes the trace of~$u$ on the boundary~$\partial\Omega$ and $\norm{\cdot}_{\dot{H}^{1/2}(\R^{n-1})}$ is defined as in~\eqref{eq:homog-norm}.
\end{lemma}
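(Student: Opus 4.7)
The plan is to reduce the estimate to a one-dimensional pointwise inequality in the tangential Fourier variable, using the partial Fourier transform with respect to $x'\in\R^{n-1}$. By a standard density argument it suffices to prove \eqref{eq:homogeneous_trace} for $u\in C^\infty_0(\overline{\Omega})$, i.e.\ smooth functions that vanish for large $|x|$; the general case then follows from the continuity of the trace map $H^1(\Omega)\to H^{1/2}(\partial\Omega)\hookrightarrow \dot H^{1/2}(\R^{n-1})$ together with the density of such functions in $H^1(\Omega)$.

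Denote by $\hat u(\xi',x_n)$ the partial Fourier transform of $u(x',x_n)$ in the tangential variables. By Plancherel's theorem and the definition \eqref{eq:homog-norm}, one has
\begin{equation*}
\|u(\,\cdot\,,0)\|_{\dot H^{1/2}(\R^{n-1})}^2
= \int_{\R^{n-1}} |\xi'|\,|\hat u(\xi',0)|^2\,d\xi',
\qquad
\|\nabla u\|_{L^2(\Omega)}^2
= \int_{\R^{n-1}} \int_0^\infty \bigl(|\xi'|^2|\hat u(\xi',x_n)|^2 + |\partial_n \hat u(\xi',x_n)|^2\bigr)\,dx_n\,d\xi'.
\end{equation*}
Hence the inequality will follow once we establish the pointwise (in $\xi'$) bound
\begin{equation*}
|\xi'|\,|\hat u(\xi',0)|^2
\;\le\; \int_0^\infty \bigl(|\xi'|^2|\hat u(\xi',x_n)|^2 + |\partial_n\hat u(\xi',x_n)|^2\bigr)\,dx_n.
\end{equation*}

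To obtain the pointwise bound, fix $\xi'$ for which $\hat u(\xi',\,\cdot\,)\in H^1(0,\infty)$, which holds for a.e.\ $\xi'$. Since this function is in particular continuous and decays at infinity (as $u$ has compact support), the fundamental theorem of calculus gives
\begin{equation*}
|\hat u(\xi',0)|^2 = -\int_0^\infty \partial_n |\hat u(\xi',x_n)|^2\,dx_n
= -2\Re\int_0^\infty \overline{\hat u}(\xi',x_n)\,\partial_n\hat u(\xi',x_n)\,dx_n.
\end{equation*}
Multiplying by $|\xi'|$ and applying the elementary inequality $2|\xi'|\,|a||b|\le |\xi'|^2|a|^2+|b|^2$ inside the integrand yields the desired pointwise bound. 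Integrating in $\xi'$ and using Plancherel concludes the proof.

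There is essentially no serious obstacle here: the argument is a one-line Fourier computation combined with Young's inequality, and even produces the sharp constant $1$ stated in \eqref{eq:homogeneous_trace}. The only mild technical points are the approximation step (to ensure $\hat u(\xi',\cdot)$ has enough regularity to apply the fundamental theorem of calculus for a.e.\ $\xi'$) and the verification that the decay at $x_n=+\infty$ of the one-dimensional slices is compatible with the identity above; both are routine consequences of $u\in H^1(\Omega)$ via Fubini and the characterisation of $H^1$ through tangential Fourier variables.
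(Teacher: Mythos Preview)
Your proof is correct and takes a genuinely different route from the paper's. The paper invokes the Caffarelli--Silvestre harmonic extension characterisation: the $\dot H^{1/2}$ norm of the boundary datum equals the Dirichlet energy of its harmonic extension to the half-space, and since the harmonic extension minimises this energy among all $H^1$ extensions with the same trace, one immediately gets $\|u(0)\|_{\dot H^{1/2}}^2 \le \|\nabla u\|_{L^2(\Omega)}^2$. Your argument instead passes to the tangential Fourier variable and reduces everything to a one-dimensional inequality for each fixed frequency, proved by the fundamental theorem of calculus and Young's inequality.

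The two arguments are in fact closely related: your pointwise bound $|\xi'|\,|\hat u(\xi',0)|^2 \le \int_0^\infty(|\xi'|^2|\hat u|^2 + |\partial_n\hat u|^2)\,dx_n$ is sharp, with equality precisely when $\hat u(\xi',x_n) = \hat u(\xi',0)\,e^{-|\xi'|x_n}$, which is the Fourier representation of the harmonic extension. So your computation is essentially the Fourier-side unpacking of the variational principle the paper uses. What your approach buys is self-containedness and elementarity: no external reference is needed, and the sharp constant~$1$ drops out of a two-line calculation. What the paper's approach buys is a conceptual picture (equality holds exactly for the harmonic extension) and a pointer to the extension-problem literature, which the authors remark generalises beyond the half-space.
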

\begin{proof}
		In order to prove~\eqref{eq:homogeneous_trace} we use the so-called \emph{harmonic extension argument} introduced by Caffarelli and Silvestre in their pioneering work~\cite{Caffarelli-Silvestre_2007} related to trace-type results in connection with fractional Sobolev
spaces. 
More specifically, given $f\colon \R^{n-1}\to \C$, 
as a particular case of~\cite[Eq.~(3.7) fixing $a=0$ there]{Caffarelli-Silvestre_2007}, 
one easily gets the following identity
\begin{equation*}
\norm{f}_{\dot{H}^{1/2}(\R^{n-1})}
= \int_{\R^{n-1}}\int_0^\infty \abs{\nabla U}^2\,dx_n\,dx',
\end{equation*}
where $U\in H^1(\Omega)$ is the (unique) harmonic extension of $f$ to the upper half-space, 
\emph{i.e.} $U\colon \Omega \to \C$ is such that
\begin{equation*}
	\begin{system}
\Delta U&=0 \qquad &\text{for}\quad (x',x_n)\in \R^{n-1} \times (0,\infty),
\\
U(x',0)&=f(x') &\text{for}\quad x'\in \R^{n-1}.
	\end{system}
\end{equation*} 
Observe that being~$U$ a harmonic function, 
it minimises the Dirichlet functional 
\begin{equation*}
  J[F]:=\int_{\R^{n-1}} \int_0^\infty \abs{\nabla F}^2\, dx_n\, dx',
\end{equation*}
over the set of function $F\colon \Omega \to \C$ 
such that the trace of~$F$ on the boundary, denoted by $F(0),$ 
satisfies $F(0)=f.$ As a consequence, one has
\begin{equation*}
	\norm{f}_{\dot{H}^{1/2}(\R^{n-1})}\leq \int_{\R^{n-1}} \int_0^\infty \abs{\nabla F}^2\, dx_n\, dx'.
\end{equation*}
Finally, taking $f:=u(0)$ and $F:=u$ in the previous inequality yields the thesis. 
\end{proof}

In passing, we stress that even though the original approach introduced by Caffarelli and Silvestre in~\cite{Caffarelli-Silvestre_2007} considered the extension to the half-space only, their idea has been generalised to cover different domains, see, for instance,~\cite{Capella-Davila-Dupaigne-Sire_2011} and~\cite{Stinga-Torrea_2010} for an adaptation to semi-infinite cylinder.

\subsection{Non-self-adjoint setting: Proof of Theorem~\ref{thm:non_self-adjoint}}
\label{Sec.nsa1} 
Let $u \in \mathcal{D}(-\Delta_\alpha^\Omega)$ be any solution
to the eigenvalue equation $-\Delta_\alpha^\Omega u = \lambda u$. 
The strategy is again to show that,
under the hypotheses of Theorem~\ref{thm:self-adjoint},
$u$~is necessarily identically zero.
Similarly to the self-adjoint setting, 
we split the proof into two cases: 
$\abs{\Im \lambda}\leq \Re \lambda$ and $\abs{\Im \lambda}> \Re \lambda.$  

\subsubsection*{$\bullet$ Case $\abs{\Im \lambda}\leq \Re \lambda.$}
As a starting point we consider inequality~\eqref{eq:crucial_identity} 
in Lemma~\ref{crucial_identity} with $f=0$, which reads as follows:
	\begin{multline}\label{eq:crucial_identity_f0}
		\int_{\Omega} \abs{\nabla u^-}^2\, dx + \frac{n-3}{n-1}(\Re \lambda)^{-{1/2}} \abs{\Im \lambda} \int_{\Omega} \abs{x} \abs{\nabla u^-}^2\, dx\\
		+(n-1)\int_{\partial \Omega} \Re \alpha \abs{u}^2\, d\sigma + 2 \Re \int_{\partial \Omega} (x \, \alpha u^-) \cdot \overline{\nabla u^-}\, d\sigma \\
		+ (\Re \lambda)^{-{1/2}} \abs{\Im \lambda} \int_{\partial \Omega} \abs{x} \Re \alpha \abs{u}^2\, d\sigma \leq 0.
	\end{multline}
We want to estimate the following term in~\eqref{eq:crucial_identity_f0}:
	\begin{equation*}
		I:=2 \Re \int_{\partial \Omega}(x\, \alpha\, u^-)\cdot \overline{\nabla u^-}\, d\sigma
=2\Re \int_{\R^{n-1}} (x'\, \alpha\, u^-(0))\cdot \overline{\nabla_{\!x'} u^-(0)}\, dx'.
	\end{equation*}
To this aim,		
we introduce the sesquilinear form
	\begin{equation*}
		T(f,g):=\int_{\R^{n-1}} (x'\, \alpha\, f)\cdot \nabla_{\!x'}\,\bar{g}\, dx',
	\end{equation*}
with suitable functions $f,g:\R^{n-1} \to \C$.
By virtue of the Plancherel theorem and the Cauchy-Schwarz inequality, we have
	\begin{equation}\label{eq:preliminary}
		\begin{split}
\abs{T(f,g)}
&\leq \abs{\big\langle x'\, \alpha\, f, \nabla_{x'}\, g \big\rangle_{L^2(\R^{n-1})}}
\leq \sum_{j=1}^{n-1} \big\langle \abs{\xi'}^{1/2}\,\abs{\mathcal{F}_{x'}\,(x_j\, \alpha\, f)}, \abs{\xi'}^{1/2}\,\abs{\mathcal{F}_{x'}\,g} \big\rangle_{L^2(\R^{n-1})}
\\
&\leq \norm{x'\, \alpha\, f}_{\dot H^{{1/2}}(\R^{n-1})} \norm{g}_{\dot H^{{1/2}}(\R^{n-1})} ,
		\end{split}
	\end{equation}
where the inner product $\langle \cdot, \cdot \rangle_{L^2(\R^{n-1})}$ 
is assumed to be conjugate linear in the second argument, 
$\mathcal{F}_{x'}$ denotes the Fourier transform 
with respect to the $(n-1)$-dimensional variable~$x'$,
$\xi'$ stands for the dual variable and
$$
  \norm{x'\, \alpha\, f}_{\dot H^{{1/2}}(\R^{n-1})}
  := \sum_{j=1}^{n-1} \norm{x_j\, \alpha\, f}_{\dot H^{{1/2}}(\R^{n-1})}.
$$
Applying~\eqref{eq:preliminary} to $I=2\Re T(u^-(0),u^-(0))$, 
we obtain the preliminary estimate
	\begin{equation}\label{eq:term_I}
		\abs{I}\leq 2 \abs{T(u^-(0),u^-(0))}\leq 2\norm{x'\, \alpha\, u^-(0)}_{\dot H^{{1/2}}(\R^{n-1})} \norm{u^-(0)}_{\dot H^{\frac{1}{2}}(\R^{n-1})}.
	\end{equation}
Let us now consider the term 
$\norm{h\, u^-(0)}_{\dot H^{{1/2}}(\R^{n-1})}$,
where we abbreviate $h:=x'\, \alpha$.
Using the fractional Leibniz rule~\eqref{eq:chain_rule} and the Sobolev embedding~\eqref{eq:Sobolev_inequality}, one gets
	\begin{equation*}
		\begin{split}
			\norm{h\, u^-(0)}_{\dot H^{{1/2}}(\R^{n-1})}&= \norm{(-\Delta)^\frac{1}{4}(h u^-(0))}_{L^2(\R^{n-1})}\\
			&\leq C^*\big[ \norm{h}_{L^\infty(\R^{n-1})} \norm{u^-(0)}_{\dot H^{{1/2}}(\R^{n-1})} + \norm{(-\Delta)^\frac{1}{4} h}_{L^{2(n-1)}(\R^{n-1})} \norm{u^-(0)}_{L^{2^\ast}(\R^{n-1})}\big]\\
			&\leq C^* \Big[ \norm{h}_{L^\infty(\R^{n-1})}  + S^\ast \norm{(-\Delta)^\frac{1}{4} h}_{L^{2(n-1)}(\R^{n-1})} \Big] \norm{u^-(0)}_{\dot{H}^{1/2}(\R^{n-1})}.
		\end{split}
	\end{equation*}
Using the previous estimate in~\eqref{eq:term_I},  we obtain
\begin{equation}\label{eq:term_I_improved}
	\begin{split}
	\abs{I}&\leq 2C^* \Big[ \norm{x'\, \alpha}_{L^\infty(\R^{n-1})}  + S^\ast \norm{D^\frac{1}{2} (x'\, \alpha)}_{L^{2(n-1)}(\R^{n-1})} \Big] \norm{u^-(0)}_{\dot{H}^{1/2}(\R^{n-1})}^2\\
	&\leq 2C^* \big[b_1 + S^\ast b_2 \big] \norm{u^-(0)}_{\dot{H}^{1/2}(\R^{n-1})}^2, 
	\end{split}
\end{equation} 	
where in the last inequality 
we have used the hypotheses~\eqref{eq:alpha_condition} 
and~\eqref{eq:fractional_derivative}.
Using additionally~\eqref{eq:homogeneous_trace} for~$u^-$ 
in~\eqref{eq:term_I_improved}, one gets
\begin{equation*}
	\abs{I}\leq 2C^*[b_1 + S^\ast b_2]\int_\Omega \abs{\nabla u^-}\, dx.
\end{equation*}
Using the last estimate in~\eqref{eq:crucial_identity_f0}, we get
	\begin{multline*}
		\hspace{0.8cm}\Big(1- 2 C^*\left[b_1 +S^\ast b_2 \right] \Big) \int_\Omega \abs{\nabla u^-}^2\, dx + \frac{n-3}{n-1}(\Re \lambda)^{-\frac{1}{2}}\abs{\Im \lambda} \int_{\Omega} \abs{x}\abs{\nabla u^-}^2\, dx\\
		+(n-1)\int_{\partial \Omega} \Re \alpha \abs{u}^2\, d\sigma +(\Re \lambda)^{-\frac{1}{2}} \abs{\Im \lambda} \int_{\partial \Omega} \abs{x} \Re \alpha \abs{u}^2\, d\sigma\leq 0.
	\end{multline*}
Since we are assuming~\eqref{hp:non_self-adjoint},
we have $\Re \alpha\geq 0$,
so discarding non-negative terms, we obtain
	\begin{equation*}
	\Big(1- 2 C^*\left[b_1 +S^\ast b_2 \right] \Big) \int_\Omega \abs{\nabla u^-}^2\, dx\leq 0.
	\end{equation*}
By virtue of~\eqref{eq:smallness}, 
it follows that~$u^-$ and so~$u$ are identically equal to zero.

\subsubsection*{$\bullet$ Case $\abs{\Im \lambda} > \Re \lambda.$}
Using Lemma~\ref{lemma:5identities}
for the approximating sequence~$u_R$,
let us now consider the sum
	\begin{equation*}
		\eqref{eq:first_Omega_const} \,\pm\, \eqref{eq:first_Omega_im_const},
	\end{equation*}
which gives
	\begin{equation*}
		\begin{split}
		\int_{\Omega} \abs{\nabla u_R}^2\, dx + \int_{\partial \Omega} \big(\Re \alpha \pm \Im \alpha\big) \abs{u_R}^2\, d\sigma=\ &(\Re \lambda \pm \Im \lambda) \int_{\Omega} \abs{u_R}^2\, dx\\
		& + \Re \int_{\Omega} \widetilde{f}_R \bar{u}_R\, dx \pm \Im \int_{\Omega} \widetilde{f}_R \bar{u}_R\, dx\\
		& + \Re \int_{\partial \Omega} \widetilde{g}_R \bar{u}_R\, d\sigma \pm \Im \int_{\partial \Omega} \widetilde{g}_R \bar{u}_R\, d\sigma.
		\end{split}
	\end{equation*}
	Estimating the terms involving $\widetilde{f}_R$ and $\widetilde{g}_R$ as~$I_1$ and~$I_3$ in the proof of Theorem~\ref{thm:self-adjoint} in Section~\ref{Sec.virial} 
and letting $R\to \infty,$ by the dominated convergence theorem, one gets
	\begin{equation*}
		\int_{\Omega} \abs{\nabla u}^2\, dx + \int_{\partial \Omega} \big(\Re \alpha \pm \Im \alpha\big) \abs{u}^2\, d\sigma=(\Re \lambda \pm \Im \lambda) \int_{\Omega} \abs{u}^2\, dx.
	\end{equation*}
	Using hypothesis~\eqref{hp:non_self-adjoint} one easily gets
	\begin{equation*}
		\int_{\Omega} \abs{\nabla u}^2\, dx\leq (\Re \lambda \pm \Im \lambda) \int_{\Omega} \abs{u}^2\, dx.
	\end{equation*}
	Therefore $\Re \lambda \pm \Im \lambda\geq 0$ unless $u=0.$ But since $\abs{\Im \lambda}> \Re \lambda$ we conclude that $u=0.$

\medskip
This concludes the proof of Theorem~\ref{thm:non_self-adjoint}.
\qed

\subsection{Non-self-adjoint setting: 
Proof of Theorem~\ref{thm:non_self-adjoint_h}}
As above, let $u \in \mathcal{D}(-\Delta_\alpha^\Omega)$ be any solution
to the eigenvalue equation $-\Delta_\alpha^\Omega u = \lambda u$. 
The proof is again split into two cases: $\abs{\Im \lambda}\leq \Re \lambda$ and $\abs{\Im \lambda}>\Re \lambda.$ Since the latter can be treated exactly as 
in the proof of Theorem~\ref{thm:non_self-adjoint} in Section~\ref{Sec.nsa1},
here we consider only the former.

\subsubsection*{$\bullet$ Case $\abs{\Im \lambda}\leq \Re \lambda.$}
As a starting point we consider inequality~\eqref{eq:crucial_identity_h} 
in Lemma~\ref{crucial_identity_h} with $f=0$, namely
	\begin{multline}\label{eq:starting_h}
	\qquad \qquad \int_{\Omega} \abs{\nabla u^-}^2\, dx + \frac{n-3}{n-1}(\Re\lambda)^{-{1/2}} \abs{\Im \lambda} \int_{\Omega} \abs{x} \abs{\nabla u^-}^2\, dx\\
	-\int_{\partial \Omega} x\cdot \nabla \Re \alpha \abs{u}^2\, d\sigma
	- 2 \Im \int_{\partial \Omega} (x\, \Im \alpha\, u^-)\cdot \overline{\nabla\, u^-}\, d\sigma
	+(\Re \lambda)^{-{1/2}} \abs{\Im \lambda} \int_{\partial \Omega} \abs{x} \Re \alpha \abs{u}^2\, d\sigma\leq 0.
	\end{multline}
In order to estimate the following term in~\eqref{eq:starting_h}
\begin{equation*}
	I:=-2\Im \int_{\partial \Omega} (x\, \Im\alpha\, u^-)\, \cdot \overline{\nabla\, u^-}\, d\sigma=-2\Im \int_{\R^{n-1}} (x'\, \Im\alpha\, u^-)\, \cdot \overline{\nabla_{x'}\, u^-}\, dx',
\end{equation*}
we introduce the sesquilinear form
	\begin{equation*}
		T(f,g):=\int_{\R^{n-1}} (x'\, \Im \alpha\, f)\cdot \nabla_{x'}\, \bar{g}\, d\sigma
	\end{equation*}
with suitable functions $f,g:\R^{n-1}\to\C$
and use an interpolation result. 	
Firstly, by virtue of the H\"older and Cauchy-Schwarz inequalities 
and by using hypothesis~\eqref{eq:alpha_condition_h}, we get
	\begin{equation}\label{eq:interpolation_1}
		\abs{T(f,g)}\leq \norm{x' \Im \alpha}_{L^\infty(\R^{n-1})} \norm{f}_{L^2(\R^{n-1})} \norm{g}_{\dot{H}^1(\R^{n-1})}\leq b_1 \norm{f}_{L^2(\R^{n-1})} \norm{g}_{\dot{H}^1(\R^{n-1})}.
	\end{equation}
Second, making instead use of an integration by parts, we also have
	\begin{equation*}
		T(f,g)=-\int_{\partial \Omega} (x' \Im \alpha\, \bar{g}) \cdot \nabla f\, d\sigma-\int_{\partial \Omega} \div(x' \Im \alpha) f \bar{g}\, d\sigma,
	\end{equation*}
and consequently employing the H\"older and Cauchy-Schwarz inequalities
with help of~\eqref{eq:alpha_condition_h} and~\eqref{eq:Hardy_h}, one has
	\begin{equation}\label{eq:interpolation_2}
		\begin{split}
		\abs{T(f,g)}&\leq \norm{x' \Im \alpha}_{L^\infty(\R^{n-1})} \norm{f}_{\dot{H}^1(\R^{n-1})} \norm{g}_{L^2(\R^{n-1})} + \left(\int_{\partial \Omega} \abs{\div(x' \Im \alpha)}^2 \abs{f}^2 \, d\sigma \right)^\frac{1}{2}\norm{g}_{L^2(\R^{n-1})}\\
		&\leq (b_1 + b_2)\norm{f}_{\dot{H}^1(\R^{n-1})} \norm{g}_{L^2(\R^{n-1})}. 
		\end{split}
	\end{equation}
An interpolation between~\eqref{eq:interpolation_1} and~\eqref{eq:interpolation_2} 
gives
	\begin{equation*}
		\abs{T(f,g)}\leq [b_1(b_1 + b_2)]^\frac{1}{2} \norm{f}_{\dot{H}^\frac{1}{2}(\R^{n-1})} \norm{g}_{\dot{H}^\frac{1}{2}(\R^{n-1})}.
	\end{equation*}
	Hence, it follows that
	\begin{equation}\label{eq:I_2_h}
		\begin{split}
		\abs{I}&\leq 2 \abs{T(u^-(0),u^-(0))}\leq 2 [b_1(b_1 + b_2)]^\frac{1}{2} \norm{u^-(0)}_{\dot{H}^\frac{1}{2}(\R^{n-1})}^2\\
		&\leq 2 [b_1(b_1 + b_2)]^\frac{1}{2} \norm{\nabla u^-}_{L^2(\Omega)}^2,
		\end{split}
	\end{equation}
where in the last inequality we have used 
the Caffarelli-Sylvester extension~\eqref{eq:homogeneous_trace}.

Now, plugging~\eqref{eq:I_2_h} into~\eqref{eq:starting_h}, we have
		\begin{multline*}
		\hspace{0.8cm}\Big(1- 2 [b_1(b_1 + b_2)]^\frac{1}{2} \Big) \int_\Omega \abs{\nabla u^-}^2\, dx + \frac{n-3}{n-1}(\Re \lambda)^{-\frac{1}{2}}\abs{\Im \lambda} \int_{\Omega} \abs{x}\abs{\nabla u^-}^2\, dx\\
		-\int_{\partial \Omega} x \cdot \nabla_{x}\Re \alpha\, \abs{u}^2\, d\sigma +(\Re \lambda)^{-\frac{1}{2}} \abs{\Im \lambda} \int_{\partial \Omega} \abs{x} \Re \alpha \abs{u}^2\, d\sigma\leq 0.
	\end{multline*}
Since we are assuming~\eqref{hp:non_self-adjoint}, we have $\Re \alpha\geq 0$.
Using additionally~\eqref{hp:repulsivity_h} 
and discarding non-negative terms, we obtain
	\begin{equation*}
	\Big(1- 2 [b_1(b_1 + b_2)]^\frac{1}{2} \Big) \int_\Omega \abs{\nabla u^-}^2\, dx\leq 0.
	\end{equation*}
By virtue of~\eqref{eq:smallness_h}, it follows that $u^-$ and so $u$ are identically equal to zero and this conclude our proof.
\qed 

\medskip
Now we turn to the resolvent estimates
contained in Theorems~\ref{thm:main_resolvent} and~\ref{thm:resolvent}.


\subsection{Resolvent estimates: Auxiliary result}
The proof of Theorem~\ref{thm:resolvent} (in fact also of Theorem~\ref{thm:non_self-adjoint}) in the case $\lambda=0$,
or more generally if $\Im \lambda=0,$ 
can be proved easily proceeding as in the self-adjoint framework, in other words there is no need to introduce the auxiliary function $u^-.$ 
Therefore, from now on, 
we will assume without loss of generality $\abs{\Im \lambda}>0.$

We shall need the following \emph{a priori} $L^2$-bound 
for weak solutions to~\eqref{eq:resolvent}.  

\begin{lemma}
Let $n\geq 1$ and assume that $u\in \widetilde{\mathcal{D}}$ is a weak solution to~\eqref{eq:resolvent}. Then the estimate
	\begin{equation}\label{L^2_bound}
			\norm{u}_{L^2(\Omega)}^2\leq \abs{\Im\lambda}^{-1} \left( \int_{\partial \Omega} \abs{\Im \alpha} \abs{u}^2\, d\sigma  + \int_{\Omega} \abs{f} \abs{u}\, dx \right)	
	\end{equation}
	holds true.
\end{lemma}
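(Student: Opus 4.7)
The plan is to derive the bound directly from the imaginary-part identity \eqref{eq:first_Omega_im_const} of Lemma~\ref{lemma:5identities}, after passing through the compactly supported approximation of Section~\ref{Sec.approx}, since $u\in\widetilde{\mathcal{D}}$ is not assumed to have compact support.

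First, I would introduce the regularisation $u_R := \xi_R u \in \widetilde{\mathcal{D}}_0$, which by the discussion preceding Lemma~\ref{lemma:fg} solves the approximating problem \eqref{eq:resolvent_approx} with source terms $\widetilde{f}_R$ and $\widetilde{g}_R$ defined in~\eqref{eq:error_terms}. Since $u_R$ is compactly supported, identity~\eqref{eq:first_Omega_im_const} applies and reads
\begin{equation*}
  \int_{\partial\Omega}\Im\alpha\,|u_R|^2\,d\sigma
  = \Im\lambda\int_\Omega|u_R|^2\,dx
  + \Im\int_\Omega \widetilde{f}_R\,\bar{u}_R\,dx
  + \Im\int_{\partial\Omega}\widetilde{g}_R\,\bar{u}_R\,d\sigma.
\end{equation*}
Isolating the term with $\Im\lambda$, taking absolute values and using the triangle inequality together with $|\Im(z)|\leq|z|$ yields
\begin{equation*}
  |\Im\lambda|\int_\Omega|u_R|^2\,dx
  \leq \int_{\partial\Omega}|\Im\alpha|\,|u_R|^2\,d\sigma
  + \int_\Omega|\widetilde{f}_R|\,|u_R|\,dx
  + \int_{\partial\Omega}|\widetilde{g}_R|\,|u_R|\,d\sigma.
\end{equation*}

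Next, I would pass to the limit $R\to\infty$. On the left-hand side and in the first term of the right-hand side, monotone convergence gives the desired limits $\|u\|_{L^2(\Omega)}^2$ and $\int_{\partial\Omega}|\Im\alpha||u|^2\,d\sigma$ respectively (recall $0\le\xi_R\le 1$ and $\xi_R\to 1$ pointwise). For the term $\int_\Omega|\widetilde{f}_R||u_R|$, I would split $\widetilde{f}_R=f_R-2\nabla u\cdot\nabla\xi_R-u\,\Delta\xi_R$: the contribution of $f_R$ converges to $\int_\Omega|f||u|\,dx$ by dominated convergence (using $|f_R u_R|\le|f||u|\in L^1(\Omega)$, via Cauchy--Schwarz and the $f=g=0$ version of~\eqref{eq:first_Omega_const} applied to guarantee $fu\in L^1(\Omega)$), while the remainders vanish by the estimates of Lemma~\ref{lemma:fg} combined with Cauchy--Schwarz and the uniform bound $\|u_R\|_{L^2(\Omega)}\leq\|u\|_{L^2(\Omega)}$. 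The boundary remainder $\int_{\partial\Omega}|\widetilde{g}_R||u_R|\,d\sigma\leq\|\widetilde{g}_R\|_{L^2(\partial\Omega)}\|u_R\|_{L^2(\partial\Omega)}\leq\varepsilon_3(R)\|u\|_{L^2(\partial\Omega)}$ vanishes likewise (the trace $u\in L^2(\partial\Omega)$ being controlled through $u\in\widetilde{\mathcal{D}}\subset H^{3/2}(\Omega)$). Dividing the resulting inequality by $|\Im\lambda|>0$ yields~\eqref{L^2_bound}.

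The argument has no real obstacle: it is essentially the imaginary-part virial/energy identity, and the only delicate point is justifying the limit in the error terms, which is handled uniformly by Lemma~\ref{lemma:fg}. The assumption $u\in L^2(\Omega)$ (encoded in $u\in\mathcal{D}(-\Delta_\alpha^\Omega)$ and hence in $\widetilde{\mathcal{D}}$) makes the right-hand side meaningful even before taking limits; the bound~\eqref{L^2_bound} is then a clean consequence of testing the equation against $u$ and extracting the imaginary part.
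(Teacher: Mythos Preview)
Your proposal is correct and follows essentially the same route as the paper: approximate by $u_R$, apply identity~\eqref{eq:first_Omega_im_const}, control the error terms via Lemma~\ref{lemma:fg}, and pass to the limit. The only cosmetic difference is that the paper first passes to the limit in the \emph{identity} (obtaining $\int_{\partial\Omega}\Im\alpha\,|u|^2\,d\sigma=\Im\lambda\,\|u\|_{L^2(\Omega)}^2+\Im\int_\Omega f\bar u\,dx$) and then multiplies by $\sgn(\Im\lambda)$, whereas you take absolute values before the limit; also, the integrability $|f||u|\in L^1(\Omega)$ follows directly from $f,u\in L^2(\Omega)$ and Cauchy--Schwarz, so your appeal to~\eqref{eq:first_Omega_const} there is unnecessary.
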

\begin{proof}
Since we want to make use of Lemma~\ref{lemma:5identities},
we again consider the compactly supported approximation~$u_R$ 
introduced in Section~\ref{Sec.approx}. 
Recall that $u_R$ solves~\eqref{eq:resolvent_approx}.	
Identity~\eqref{eq:first_Omega_im_const} reads
\begin{equation*}
			\int_{\partial \Omega} \Im \alpha \abs{u_R}^2\, d\sigma= \Im \lambda \int_{\Omega} \abs{u_R}^2\, dx
			+ \Im \int_{\Omega} \widetilde{f}_R \bar{u}_R\, dx + \Im \int_{\partial \Omega} \widetilde{g}_R \bar{u}_R\, d\sigma.
\end{equation*}
Estimating the right-hand side by making use of the Cauchy-Schwarz inequality 
and Lemma~\ref{lemma:fg} and passing to the limit $R\to \infty,$ 
by means of the dominated convergence theorem, one gets
		\begin{equation*}
			\int_{\partial \Omega} \Im \alpha \abs{u}^2\, d\sigma= \Im \lambda \int_{\Omega} \abs{u}^2\, dx
			+ \Im \int_{\Omega} f \bar{u}\, dx.
		\end{equation*}
		Multiplying the previous by $\sgn(\Im \lambda)$ we obtain~\eqref{L^2_bound}.
\end{proof}

\subsection{Resolvent estimates: Proof of Theorem~\ref{thm:resolvent}}
Let $u \in \mathcal{D}(-\Delta_\alpha^\Omega)$ be any solution
to the resolvent equation $(-\Delta_\alpha^\Omega-\lambda)u=f$.   
We split the proof into two cases: $\abs{\Im\lambda}\leq \Re \lambda$ and $\abs{\Im \lambda}> \Re \lambda.$
In order to save space we write $\norm{\cdot}$ instead of $\norm{\cdot}_{L^2(\Omega)}$;
the norms in other spaces will be written explicitly.

\subsubsection*{$\bullet$ Case $\abs{\Im\lambda}\leq \Re \lambda$.}
We start by estimating the individual terms on the right-hand side
of the key identity~\eqref{eq:crucial_identity}, namely
$$
  F_1 := (n-1) \Re \int_{\Omega} f \bar{u}\, dx, \qquad 
  F_2 := 2 \Re \int_{\Omega} x f^-\cdot \overline{\nabla u^-}\, dx, \qquad 
  F_3 := (\Re \lambda)^{-{1/2}} \abs{\Im \lambda} \Re \int_{\Omega} f \abs{x} \bar{u}\, dx. 
$$
By the Cauchy-Schwarz inequality, 
the Hardy inequality~\eqref{eq:Hardy} 
and by using that $\abs{u}=\abs{u^-}$, 
we have
\begin{equation*}
			\abs{F_1}\leq (n-1)\norm{\abs{x} f} \norm*{\frac{u}{\abs{x}}}\leq \frac{2(n-1)}{n-2} \norm{\abs{x} f}\norm{\nabla u^-}.
\end{equation*}
At the same time,
\begin{equation*}
			\abs{F_2}\leq 2\norm{\abs{x} f}\norm{\nabla u^-},
\end{equation*}
where we have used that $\abs{f^-}= \abs{f}.$
Finally, using~\eqref{L^2_bound}, 
the fact that $\abs{\Im \lambda}\leq \Re \lambda,$ the Cauchy-Schwarz inequality,
the Hardy inequality~\eqref{eq:Hardy} and $\abs{u}=\abs{u^-}$, we obtain
\begin{equation*}
			\abs{F_3}\leq \norm{\abs{x} f} \left( \int_{\partial \Omega} \abs{\Im \alpha} \abs{u}^2\, d\sigma \right)^{1/2} + \frac{\sqrt{2}}{\sqrt{n-2}} \norm{\abs{x}f}^{3/2}\norm{\nabla u^-}^{1/2}.
\end{equation*}
Summing up, the right-hand side of~\eqref{eq:crucial_identity}
can be estimated as follows:
\begin{multline*}
			\hspace{1cm}F_1+F_2+F_3\\
			\leq \frac{2(2n-3)}{n-2}\norm{\abs{x} f} \norm{\nabla u^-} + \frac{\sqrt{2}}{\sqrt{n-2}} \norm{\abs{x}f}^{3/2}\norm{\nabla u^-}^{1/2}
			+ \norm{\abs{x} f} \left( \int_{\partial \Omega} \abs{\Im \alpha} \abs{u}^2\, d\sigma \right)^{1/2}.
\end{multline*}
Given $\varepsilon, \delta>0$ and making use of the Young inequality, 
one has
\begin{equation*}
\norm{\abs{x}f}\norm{\nabla u^-}\leq \frac{1}{2\varepsilon^2} \norm{\abs{x}f}^2 + \frac{\varepsilon^2}{2}\norm{\nabla u^-}^2,
\qquad 
\norm{\abs{x}f}^\frac{3}{2} \norm{\nabla u^-}^\frac{1}{2}\leq \frac{3}{4\delta^\frac{4}{3}} \norm{\abs{x}f}^2 + \frac{\delta^4}{4}\norm{\nabla u^-}^2
\end{equation*}
and
\begin{equation*}
			\norm{\abs{x} f} \left( \int_{\partial \Omega} \abs{\Im \alpha} \abs{u}^2\, d\sigma \right)^{1/2}\leq \frac{1}{2} \norm{\abs{x}f}^2 + \frac{1}{2}\int_{\partial \Omega} \abs{\Im \alpha}\abs{u}^2\, d\sigma.
\end{equation*}
Consequently,
\begin{equation}\label{eq:I_1,I_2,I_3}
			\begin{split}
			\hspace{1cm} F_1+F_2+F_3\leq & \left(\frac{1}{\varepsilon^2} \frac{2n-3}{n-2} + \frac{3}{4\delta^\frac{4}{3}} \frac{\sqrt{2}}{\sqrt{n-2}} + \frac{1}{2} \right) \norm{\abs{x}f}^2\\
			&+\left( \varepsilon^2 \frac{2n-3}{n-2} + \frac{\delta^4}{4} \frac{\sqrt{2}}{\sqrt{n-2}} \right) \norm{\nabla u^-}^2 +\frac{1}{2} \int_{\partial \Omega} \abs{\Im \alpha}\abs{u}^2\, d\sigma.
			\end{split}
\end{equation}
As in the proof of Theorem~\ref{thm:non_self-adjoint} in Section~\ref{Sec.nsa1},
we have
		\begin{equation*}
			- \frac{(n-1)}{2}(\Re \lambda)^{-\frac{1}{2}}  \abs{\Im \lambda} \int_{\Omega} \frac{~\abs{u}^2}{\abs{x}}\, dx\geq -\frac{2}{n-1}(\Re \lambda)^{-{1/2}} \abs{\Im \lambda} \int_{\Omega} \abs{x}\abs{\nabla u^-}^2\, dx
		\end{equation*}
		and 
		\begin{equation*}
			2 \Re \int_{\partial \Omega}(x\, \alpha\, u^-)\cdot \overline{\nabla u^-}\, d\sigma\geq -2C^*[b_1 + S^\ast b_2] \norm{\nabla u^-}^2.
		\end{equation*}
Using these estimates and~\eqref{eq:I_1,I_2,I_3} in~\eqref{eq:crucial_identity}, 
we get  
\begin{multline*}
				\left( 1- \varepsilon^2 \frac{2n-3}{n-2} - \frac{\delta^4}{4} \frac{\sqrt{2}}{\sqrt{n-2}}-2C^*[b_1 + S^\ast b_2] \right)\norm{\nabla u^-}^2 + (\Re \lambda)^{-{1/2}} \abs{\Im \lambda} \frac{n-3}{n-1} \int_{\Omega} \abs{x} \abs{\nabla u^-}^2\, dx \\
				+\int_{\partial \Omega} \big[(n-1) \Re \alpha - \frac{1}{2}\abs{\Im \alpha} \big] \abs{u}^2\, d\sigma + (\Re \lambda)^{- {1/2}} \abs{\Im \lambda} \int_{\partial \Omega} \abs{x} \Re \alpha \abs{u}^2\, d\sigma\\
				\leq \left(\frac{1}{\varepsilon^2} \frac{2n-3}{n-2} + \frac{3}{4\delta^\frac{4}{3}} \frac{\sqrt{2}}{\sqrt{n-2}} + \frac{1}{2} \right) \norm{\abs{x}f}^2.	
\end{multline*}
Since we are assuming~\eqref{hp:non_self-adjoint}, 
we particularly have $(n-1)\Re \alpha - \frac{1}{2}\abs{\Im \alpha}\geq 0$ 
and $\Re\alpha\geq 0$. Therefore discarding non-negative terms, we have
\begin{equation*}
			\left( 1- \varepsilon^2 \frac{2n-3}{n-2} - \frac{\delta^4}{4} \frac{\sqrt{2}}{\sqrt{n-2}}-2C^*[b_1 + S^\ast b_2] \right)\norm{\nabla u^-}^2\leq \left(\frac{1}{\varepsilon^2} \frac{2n-3}{n-2} + \frac{3}{4\delta^\frac{4}{3}} \frac{\sqrt{2}}{\sqrt{n-2}} + \frac{1}{2} \right) \norm{\abs{x}f}^2.
\end{equation*}
Choosing $\varepsilon, \delta$ small enough 
and using that~\eqref{eq:smallness} holds,
estimate~\eqref{eq:res_1} is proved.

\subsubsection*{$\bullet$ Case $\abs{\Im \lambda}> \Re \lambda$.}
Identity~\eqref{eq:first_Omega_const} for~$u_R$ reads
\begin{equation*}
			\int_{\Omega} \abs{\nabla u_R}^2\, dx + \int_{\partial \Omega} \Re \alpha \abs{u_R}^2\, d\sigma = \Re \lambda \int_{\Omega} \abs{u_R}^2\, dx + \Re \int_{\Omega} \widetilde{f}_R \bar{u}_R\, dx + \Re \int_{\partial \Omega} \widetilde{g}_R \bar{u}_R\, d\sigma.
\end{equation*}
After passing to the limit $R\to \infty$ making use of the estimates in Lemma~\ref{lemma:fg}, one gets 
		\begin{equation*}
			\int_{\Omega} \abs{\nabla u}^2\, dx + \int_{\partial \Omega} \Re \alpha \abs{u}^2\, d\sigma = \Re \lambda \int_{\Omega} \abs{u}^2\, dx + \Re \int_{\Omega} f \bar{u}\, dx.
		\end{equation*}
		If $\Im \lambda=0$ then in particular $\Re \lambda <0$ and estimate~\eqref{eq:res_2} follows easily from the previous identity. 
		If $\Im \lambda\neq 0,$ we use the $L^2$-control~\eqref{L^2_bound}. 
Consequently, since $\abs{\Im \lambda}>\Re \lambda,$ after using the Cauchy-Schwarz and the Hardy inequality~\eqref{eq:Hardy}, we have
\begin{equation*}
			\int_{\Omega} \abs{\nabla u}^2\, dx + \int_{\partial \Omega} (\Re \alpha - \abs{\Im \alpha}) \abs{u}^2\, d\sigma\leq \frac{4}{n-2} \norm{\abs{x} f}\norm{\nabla u}.
\end{equation*}
By hypothesis~\eqref{hp:non_self-adjoint},
the desired estimate~\eqref{eq:res_2} follows. 
\qed

\subsection{Resolvent estimates: Proof of Theorem\ref{thm:main_resolvent}}
Now we are in a position to prove Theorem\ref{thm:main_resolvent}
as a corollary of Theorem~\ref{thm:resolvent}.
If $\abs{\Im \lambda}\leq \Re \lambda$,
then 
\begin{equation*}
\norm{\abs{x}^{-1} u}\leq \frac{2}{n-2}\norm{\nabla u^-}
\leq \frac{2c}{n-2} \norm{\abs{x}f},
\end{equation*}
where we have used~\eqref{eq:res_1}, the fact $\abs{u^-}=\abs{u}$ 
and the Hardy inequality~\eqref{eq:Hardy}. 
If $\abs{\Im \lambda}> \Re \lambda,$ 
using~\eqref{eq:res_2} and again the Hardy inequality~\eqref{eq:Hardy}, 
we also arrive at
			\begin{equation*}
				\norm{\abs{x}^{-1} u}\leq \frac{2c}{n-2}\norm{\abs{x}f}.
			\end{equation*}
This concludes the proof.
\qed

		
\appendix


\section{The Robin Laplacian}\label{Appendix:rigorous_definition}
For the convenience of the reader, here we provide details 
on the rigorous definition of the Robin Laplacian in the half-space~$\Omega$ 
as an m-sectorial operator in $L^2(\Omega)$
and characterise its operator domain.
 	
Given $\alpha \in L^\infty(\partial \Omega; \C)$,
let us consider the quadratic form~$h_\alpha$ 
introduced in~\eqref{form}.
We consider~$h_\alpha$ as a perturbation of~$h_0$,
\emph{i.e.}, we write $h_\alpha = h_0 + \mathsf{a}$ with
\begin{equation*}
\mathsf{a}[u]:=\int_{\partial \Omega} \alpha \abs{u}^2\, d\sigma, 
\qquad \mathcal{D}(\mathsf{a}):= H^1(\Omega).
\end{equation*}	
Observe that $h_0$ is a densely defined, non-negative 
and closed form in $L^2(\Omega)$.
In fact, $h_0$~is associated 
with the (self-adjoint) Neumann Laplacian in $L^2(\Omega)$.
We claim that the form~$\mathsf{a}$ is relatively bounded 
with respect to~$h_0$ with the relative bound less than one, \emph{i.e.},
$\mathcal{D}(h_0)\subseteq\mathcal{D}(\mathsf{a}) $ 
(which is trivially true in our case) 
and there exist two real constants $a<1$ and~$b$ such that
\begin{equation}\label{eq:relatively_bounded}
	\abs*{\mathsf{a}[u]}\leq a\, h_0[u] + b\norm{u}_{L^2(\Omega)} \quad \text{for all}\, u\in \mathcal{D}(h_0).
\end{equation} 
To check this inequality, we shall prove a preliminary estimate for the $L^2$-norm of $u$ on $\partial \Omega.$
First, let us start considering $u\in C^\infty_0(\R^n),$ it is easy to show that
\begin{equation}\label{eq:r.bound-boundary}
	\begin{split}
		\abs{u(x', 0)}^2
		&=- \int_0^\infty \frac{\partial}{\partial_{x_n}} \abs{u(x', x_n)}^2\Big|_{x_n=t}\, dt\\
		&= -2 \Re \int_0^\infty \bar{u}_{x_n}(x',t) u(x',t)\, dt\\
		&\leq \int_0^\infty \Big( \varepsilon \abs{u_{x_n}}^2(x',t) + \frac{1}{\varepsilon} \abs{u}^2(x',t) \Big)\, dt,			
	\end{split}
\end{equation}
where in the last inequality we have used that for any $a,b\geq0$ and for any $\varepsilon>0,$ one has $ab=\sqrt{\varepsilon} a \frac{1}{\sqrt{\varepsilon}} b \leq \varepsilon\frac{a^2}{2} + \frac{1}{\varepsilon}\frac{b^2}{2}.$ Integrating~\eqref{eq:r.bound-boundary} over $\R^{n-1},$ one gets
\begin{equation}\label{eq:useful-bound}
	\norm{u}_{L^2(\partial \Omega)}^2 \leq \varepsilon \norm{\nabla u}_{L^2(\Omega)}^2 + \frac{1}{\varepsilon} \norm{u}_{L^2(\Omega)}^2
	\quad \text{for all}\, u\in C^\infty_0(\R^n).
\end{equation} 
Since $C^\infty_0(\R^n)$ are dense in $H^1(\Omega),$ inequality~\eqref{eq:useful-bound} holds true for any $u\in H^1(\Omega)=\mathcal{D}(h_0).$ Now, with inequality~\eqref{eq:useful-bound}
at hands, estimate~\eqref{eq:relatively_bounded} follows immediately. Indeed,
\begin{equation*}
			\begin{split}
			\abs{\mathsf{a}[u]}&\leq \int_{\partial \Omega} \abs{\alpha} \abs{u}^2\, d\sigma\\
												&\leq \norm{\alpha}_{L^\infty(\partial \Omega)} \norm{u}_{L^2(\partial\Omega)}^2\\
												&\leq \varepsilon \norm{\alpha}_{L^\infty(\partial \Omega)} \norm{\nabla u}_{L^2(\Omega)}^2 + \frac{1}{\varepsilon} \norm{a}_{L^\infty(\partial \Omega)} \norm{u}_{L^2(\Omega)}^2.
			\end{split}
\end{equation*}
Choosing $\varepsilon>0$ such that $\varepsilon\norm{\alpha}_{L^\infty(\partial \Omega)}<1$, then~\eqref{eq:relatively_bounded} follows.	

As a consequence of the validity of~\eqref{eq:relatively_bounded},
the sum $h_\alpha=h_0+\mathsf{a}$ is densely defined, closed and sectorial
due to the stability result \cite[Thm.~VI.1.33]{Kato}. 
By the representation theorem \cite[Thm.~VI.2.1]{Kato}, 
there exists an m-sectorial operator $-\Delta_\alpha^\Omega$ in $L^2(\Omega)$
such that 
		\begin{equation*}
			\mathcal{D}(-\Delta_\alpha^\Omega)=\big\{ u\in \mathcal{D}(h_\alpha) \big | \, \text{there exists a}\; w_u\in L^2(\Omega) \; \text{such that}\; h_\alpha[u,v]=\langle w_u, v\rangle_{L^2(\Omega)}\; 
\text{for all}\; v\in \mathcal{D}(h_\alpha)\big \}
		\end{equation*}
		and
		\begin{equation*}
			-\Delta_\alpha^\Omega u= w_u, \quad u\in \mathcal{D}(-\Delta_\alpha^\Omega)
		\end{equation*}
Here $h_\alpha[\cdot,\cdot]$ is the sesquilinear form 
associated with the quadratic form~$h_\alpha$
and we use the convention that inner products 
are conjugate linear in the second argument.
The operator $-\Delta_\alpha^\Omega$ is called the Robin Laplacian 
(the terminology is justified by Theorem~\ref{Thm.Robin} below).
 
Now we turn to the description of the domain 
of the m-sectorial operator $-\Delta_\alpha^\Omega.$
In order to do that, let us recall some preliminary basic facts on traces of functions from Sobolev spaces (if~$\Omega$ is an open, bounded, non-empty and Lipschitz domain, these results can be found in~\cite{Gesztesy-Mitrea_2008}, in the case of the half-space the proof follows a similar strategy).
Let $\eta$ be the outward pointing normal unit vector to $\partial \Omega,$ the Dirichlet trace map $\gamma_D^0\colon C^\infty(\overline{\Omega})\to C^\infty(\partial \Omega),$ $\gamma_D^0u=u|_{\partial \Omega}$ and the Neumann trace map $\gamma_N^0\colon C^\infty(\overline{\Omega})\to C^\infty(\partial \Omega),$ $\gamma_N^0u=\eta \cdot \nabla u|_{\partial \Omega}=-u_{x_n}|_{\partial \Omega},$  extend to
		\begin{equation*}
			\gamma_D \colon H^s(\Omega)\to H^{s-1/2}(\partial \Omega)\hookrightarrow L^2(\partial\Omega),\quad 1/2<s<3/2
		\end{equation*}
		and
		\begin{equation*}			\gamma_N= \eta \cdot \gamma_D \nabla \colon H^{s+1}(\Omega) \to L^2(\partial \Omega), \quad 1/2<s<3/2,
		\end{equation*}
		respectively.
		The constraint $1/2<s<3/2$ is usually too restrictive for applications, hence one is interested in extending the action of $\gamma_D$ and $\gamma_N$ to other settings. Moving in this direction it has been proved that for any $s>-3/2$ the restriction-to-boundary operator $\gamma_D^0$ extends to a linear bounded operator
			\begin{equation*}
				\gamma_D\colon \big\{ u\in H^{1/2}(\Omega)\, \big | \, \Delta u\in H^s(\Omega)\big\} \to L^2(\partial \Omega),
			\end{equation*}
			when $\big\{ u\in H^{1/2}(\Omega)\, \big | \, \Delta u\in H^s(\Omega)\big\}$ is equipped with the natural graph norm $u\mapsto \norm{u}_{H^{1/2}(\Omega)} + \norm{\Delta u}_{H^s(\Omega)}.$
With this result at hand, the next lemma easily follows. 

\begin{lemma}\label{lemma:Neumann_3/2epsilon}
			The Neumann trace operator $\gamma_N^0$ also extends to
			\begin{equation*}
				\gamma_N= \eta \cdot \gamma_D \nabla \colon \big\{ u\in H^{3/2}(\Omega)\, \big|\, \Delta u \in L^2(\Omega) \big\} \to L^2(\partial \Omega)
			\end{equation*}
			in a bounded fashion when the space $\big\{ u\in H^{3/2}(\Omega)\, \big|\, \Delta u \in L^2(\Omega) \big\}$ is equipped with the natural graph norm $u\mapsto \norm{u}_{H^{3/2}(\Omega)} + \norm{\Delta u}_{L^2(\Omega)}.$ Moreover, given $u\in H^{3/2}(\Omega)$ with $\Delta u \in L^2(\Omega),$ the generalised Green identity
			\begin{equation}\label{eq:Green_identity}
				\int_{\partial \Omega} \eta \cdot \gamma_D(\nabla u) \bar{v}\, d\sigma=\int_{\Omega} \nabla u \overline{\nabla v}\, dx + \int_{\Omega} \Delta u \bar{v}\, dx 
			\end{equation}
			holds true for all $v\in H^1(\Omega).$ 
\end{lemma}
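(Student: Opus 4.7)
The statement breaks into two claims: the bounded extension of $\gamma_N^0$, and the validity of the generalised Green identity. For the extension, my plan is to reduce the matter to the Dirichlet trace result cited just above the lemma, applied to the last partial derivative $u_{x_n}$. Indeed, if $u \in H^{3/2}(\Omega)$ with $\Delta u \in L^2(\Omega)$, then $u_{x_n} \in H^{1/2}(\Omega)$, and because distributional derivatives commute,
$$\Delta(u_{x_n}) = \partial_{x_n}(\Delta u) \in H^{-1}(\Omega).$$
Hence $u_{x_n}$ lies in the space $\{w \in H^{1/2}(\Omega) : \Delta w \in H^{s}(\Omega)\}$ with the admissible value $s = -1 > -3/2$, on which the cited extension of $\gamma_D$ is a bounded map into $L^2(\partial\Omega)$. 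Since $\eta = (0,\dots,0,-1)$, setting $\gamma_N u := \eta \cdot \gamma_D(\nabla u) = -\gamma_D(u_{x_n})$ yields the claimed bounded extension, with operator norm controlled by $\|u\|_{H^{3/2}(\Omega)} + \|\Delta u\|_{L^2(\Omega)}$ through the chain of estimates $\|u_{x_n}\|_{H^{1/2}(\Omega)} + \|\partial_{x_n}\Delta u\|_{H^{-1}(\Omega)} \lesssim \|u\|_{H^{3/2}(\Omega)} + \|\Delta u\|_{L^2(\Omega)}$.

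For the Green identity I would use a density argument. The identity~\eqref{eq:Green_identity} holds for $u \in C^\infty(\overline\Omega)$ with compact support and $v \in H^1(\Omega)$ by classical integration by parts. The plan is then to approximate a general~$u$ by a sequence $\{u_k\}$ of such smooth functions satisfying the simultaneous convergences $u_k \to u$ in $H^{3/2}(\Omega)$ and $\Delta u_k \to \Delta u$ in $L^2(\Omega)$. Assuming such an approximation is in hand, the continuity of each term in~\eqref{eq:Green_identity} is straightforward: the two volume integrals on the right-hand side are controlled by $(\|\nabla u_k - \nabla u\|_{L^2(\Omega)} + \|\Delta u_k - \Delta u\|_{L^2(\Omega)})\|v\|_{H^1(\Omega)}$ via Cauchy--Schwarz, and the boundary integral on the left is handled by applying the trace estimate just established to $u_k - u$.

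The main technical obstacle I anticipate is producing the simultaneous approximation in $H^{3/2}$ and Laplacian-in-$L^2$. My plan is to combine an extension operator with a vertical translation and a mollification: let $E \colon H^{3/2}(\Omega) \to H^{3/2}(\R^n)$ be the reflection-based extension available for the half-space, let $\varphi_\varepsilon$ be a standard mollifier of radius $\varepsilon$, fix $\tau > \varepsilon$, and define $u_{\varepsilon,\tau}(x) := ((Eu) \ast \varphi_\varepsilon)(x + \tau e_n)$ for $x \in \overline\Omega$. Because the translation by $\tau e_n$ moves the evaluation point into $\{x_n > \tau\} \subset \Omega$, and the mollification only sees an $\varepsilon$-neighbourhood thereof, the convolution depends only on values of $Eu$ inside $\Omega$, where $Eu = u$. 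Consequently $\Delta u_{\varepsilon,\tau}$ on $\Omega$ coincides with a mollification of the shifted $\Delta u$, and standard $L^2$-continuity of translations and mollifications yields $u_{\varepsilon,\tau} \to u$ in $H^{3/2}(\Omega)$ and $\Delta u_{\varepsilon,\tau} \to \Delta u$ in $L^2(\Omega)$ as $\varepsilon, \tau \to 0$, completing the density step and hence the proof.
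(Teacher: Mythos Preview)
Your proposal is correct and follows precisely the route the paper indicates: the paper does not give a detailed proof but simply states that, ``with this result at hand'' (namely the boundedness of the extended Dirichlet trace $\gamma_D$ on $\{w\in H^{1/2}(\Omega):\Delta w\in H^s(\Omega)\}$ for $s>-3/2$), the lemma ``easily follows''; your argument applying that result to $u_{x_n}$ with $s=-1$ is exactly the intended reduction, and your density argument for the Green identity is a standard elaboration. One small imprecision: your approximants $u_{\varepsilon,\tau}$ are smooth on $\overline\Omega$ and lie in every $H^k(\Omega)$ but are not compactly supported as you announce, so either invoke the Green identity directly for $H^2(\Omega)$ functions or insert an additional spatial cutoff; neither affects the validity of the scheme.
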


Now we are in position to prove the following result,
which in particular states that the Robin Laplacian 
indeed acts as the (weak) Laplacian and satisfies the Robin
boundary conditions (in the sense of traces).
\begin{theorem}\label{Thm.Robin}
If $\alpha \in L^\infty(\partial \Omega; \C),$ 
then the Robin Laplacian $-\Delta_\alpha^\Omega$ satisfies
			\begin{equation*}
				-\Delta_\alpha^\Omega u=-\Delta u \qquad \text{for all}\; u \in \mathcal{D}(-\Delta_\alpha^\Omega),
			\end{equation*}
			where 
			\begin{equation*}
				\mathcal{D}(-\Delta_\alpha^\Omega)=\big\{u\in H^{3/2}(\Omega)\big|\, \Delta u \in L^2(\Omega),\, -u_{x_n} +\alpha u=0\; \text{in}\; L^2(\partial \Omega) \big\}.
			\end{equation*}
\end{theorem}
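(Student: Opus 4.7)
The plan is to prove the two inclusions defining $\mathcal{D}(-\Delta_\alpha^\Omega)$. For the forward direction, I would let $u \in \mathcal{D}(-\Delta_\alpha^\Omega)$. The representation theorem applied to $h_\alpha$ gives $u \in H^1(\Omega)$ and a function $w_u = -\Delta_\alpha^\Omega u \in L^2(\Omega)$ satisfying $h_\alpha[u,v] = \langle w_u, v \rangle_{L^2(\Omega)}$ for every $v \in H^1(\Omega)$. Testing first with $v \in C_c^\infty(\Omega)$, for which the boundary contribution in $h_\alpha$ vanishes, identifies $-\Delta u = w_u$ in the sense of distributions. Hence $\Delta u \in L^2(\Omega)$ and, as operators, $-\Delta_\alpha^\Omega u = -\Delta u$.

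The technical core is the upgrade from $H^1$ to $H^{3/2}$ regularity, and this is where I expect the main obstacle. Once $u \in H^1(\Omega)$ with $\Delta u \in L^2(\Omega)$, the Dirichlet trace $\gamma_D u \in H^{1/2}(\partial\Omega) \hookrightarrow L^2(\partial\Omega)$ and consequently $\alpha \, \gamma_D u \in L^2(\partial\Omega)$ since $\alpha \in L^\infty(\partial \Omega)$. I would then view $u$ as a weak solution of the inhomogeneous Neumann problem $-\Delta u = w_u$ in $\Omega$ with Neumann datum $-u_{x_n} = -\alpha \, \gamma_D u \in L^2(\partial\Omega)$; standard elliptic regularity for the Neumann Laplacian on the half-space --- cleanly implemented via a partial Fourier transform in the tangential variable $x'$ that reduces the problem to an explicit second-order ODE in $x_n$ and leads to $H^{3/2}$ bounds from the $L^2$ control of the source and the Neumann data --- yields $u \in H^{3/2}(\Omega)$. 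Executing this regularity step carefully, so that the conclusion applies to the particular $u$ at hand rather than merely to some representative, is the principal analytic hurdle.

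With $u \in H^{3/2}(\Omega)$ and $\Delta u \in L^2(\Omega)$ secured, Lemma~\ref{lemma:Neumann_3/2epsilon} makes the Neumann trace $\gamma_N u = -u_{x_n}$ meaningful in $L^2(\partial\Omega)$, and the generalised Green identity~\eqref{eq:Green_identity} rewrites
\[
  h_\alpha[u,v] = -\int_{\Omega} (\Delta u)\, \bar v \, dx + \int_{\partial \Omega} \bigl(-u_{x_n} + \alpha u\bigr) \bar v \, d\sigma
\]
for all $v \in H^1(\Omega)$. Comparing this with $\langle w_u, v\rangle_{L^2(\Omega)} = \langle -\Delta u, v\rangle_{L^2(\Omega)}$ leaves $\int_{\partial\Omega}(-u_{x_n}+\alpha u)\bar v \, d\sigma = 0$ for every $v \in H^1(\Omega)$, and the density of the trace space $\{\gamma_D v : v \in H^1(\Omega)\}$ in $L^2(\partial\Omega)$ forces $-u_{x_n}+\alpha u = 0$ in $L^2(\partial\Omega)$, completing the first inclusion.

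The reverse inclusion is an immediate check: given $u \in H^{3/2}(\Omega)$ with $\Delta u \in L^2(\Omega)$ and $-u_{x_n}+\alpha u = 0$ on $\partial\Omega$, the same Green identity combined with the boundary identity produces $h_\alpha[u,v] = \langle -\Delta u, v\rangle_{L^2(\Omega)}$ for all $v \in H^1(\Omega)$, so the representation theorem places $u$ in $\mathcal{D}(-\Delta_\alpha^\Omega)$ with $-\Delta_\alpha^\Omega u = -\Delta u$. Thus only the $H^{3/2}$-regularity step is genuinely nontrivial; the rest of the argument is an exercise in the generalised Green identity and the density of traces.
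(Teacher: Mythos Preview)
Your proof is correct and structurally identical to the paper's: both directions hinge on the generalised Green identity~\eqref{eq:Green_identity}, the distributional identification $-\Delta_\alpha^\Omega u = -\Delta u$ obtained by testing against $C_c^\infty(\Omega)$, and the surjectivity of $\gamma_D : H^1(\Omega) \to H^{1/2}(\partial\Omega)$ together with the density of the latter in $L^2(\partial\Omega)$ to force the Robin condition. The reverse inclusion is handled in exactly the same way in both.

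The one point of divergence is precisely the step you flag as the main hurdle, the $H^{3/2}$ upgrade. The paper does not argue via the Neumann problem and partial Fourier transform; instead it observes that every $u \in \mathcal{D}(-\Delta_\alpha^\Omega)$ lies in the range of the resolvent $(-\Delta_\alpha^\Omega - z)^{-1}$ for $z \notin \sigma(-\Delta_\alpha^\Omega)$ and asserts the boundedness of this resolvent as a map $L^2(\Omega) \to \widetilde{\mathcal{D}} = \{u \in H^{3/2}(\Omega) : \Delta u \in L^2(\Omega)\}$, deferring the justification to an adaptation of the Gesztesy--Mitrea argument for Lipschitz domains. Your route --- absorbing $\alpha\,\gamma_D u \in L^2(\partial\Omega)$ into the Neumann data and invoking explicit half-space regularity via Fourier analysis in $x'$ --- is more self-contained for this particular geometry and makes the origin of the $H^{3/2}$ exponent transparent, at the cost of having to execute the ODE estimates yourself. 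The paper's route is shorter to state but outsources the analytic content to the cited reference.
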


\begin{proof}
Even though the proof of this result follows a standard scheme 
(see, \emph{e.g.}, \cite{Gesztesy-Mitrea_2008} or \cite{Helffer-Pankrashkin_2015}), 
we will provide it for the sake of completeness 
(see also~\cite[Rem.~7.5~ii)]{Behrndt-Langer-Lotoreichik-Rohleder_2018}  
and~\cite[Thm.~3.5]{Behrndt-Schlosser} for the proof of 
a slightly more general result in the case of bounded domains
performed using the technique of boundary triples).

Recall the definition of the space~$\widetilde{\mathcal{D}}$ given in~\eqref{space}.
As a starting point we prove that 
$\mathcal{D}(-\Delta_\alpha^\Omega)\subseteq \widetilde{\mathcal{D}}.$
Let $z\in \C\setminus \sigma(-\Delta_\alpha^\Omega)$ and consider the following problem: given $w\in L^2(\Omega),$ find $u\in \mathcal{D}(h_\alpha)$ such that
\begin{equation}\label{eq:domain_z}
  h_\alpha[u,v] -z\,\langle u,v \rangle_{L^2(\Omega)}
  =\langle w,v \rangle_{L^2(\Omega)} 
  \quad \text{for all}\; v\in \mathcal{D}(h_\alpha).
\end{equation}
By definition, equation~\eqref{eq:domain_z} holds if and only if $u\in \mathcal{D}(-\Delta_\alpha^\Omega)$ and $-\Delta_\alpha^\Omega u=z u + w.$
Set
\begin{equation*}
	u:=(-\Delta_\alpha^{\Omega}-z)^{-1}w,
\end{equation*} 
the unique solution to~\eqref{eq:domain_z} (whose existence follows from elementary Hilbert theory),
then the boundedness of the resolvent operator
($z\in \C\setminus\sigma(-\Delta_\alpha^\Omega)$)
\begin{equation*}
	(-\Delta_\alpha^\Omega -z)^{-1}\colon L^2(\Omega) \to \widetilde{\mathcal{D}}
\end{equation*}  
(which follows by adapting  
the argument provided in~\cite{Gesztesy-Mitrea_2008} to the half-space)
gives the claim.

\smallskip
Now observe that for any $u\in \mathcal{D}(-\Delta_\alpha^\Omega),$ then 
\begin{equation}\label{equality_distribution}
	-\Delta_\alpha^\Omega u=-\Delta u\quad \text{in}\quad \mathscr{D}'(\Omega).
\end{equation}
Indeed, if $u\in \mathcal{D}(-\Delta_\alpha^\Omega),$ then there exists $w_u\in L^2(\Omega)$ such that 
\begin{equation*}
	\int_\Omega \nabla u \cdot \overline{\nabla v}\, dx + \int_{\partial \Omega} \alpha u \bar{v}\, d\sigma=\int_{\Omega} w_u \bar{v}\, dx=:\int_{\Omega} -\Delta_\alpha^\Omega u \bar{v}\, dx \quad \text{for all}\quad v\in H^1(\Omega).
\end{equation*}
On the other hand, taking $v\in C^\infty_0(\Omega)\hookrightarrow H^1(\Omega),$ one has
\begin{equation*}
	\int_{\Omega} \nabla u \cdot \overline{\nabla v}\, dx + \int_{\partial \Omega} \alpha u \bar{v}\, d\sigma= -\int_{\Omega} \Delta u \bar{v}\, dx\quad \text{for all}\quad v\in C^\infty_0(\Omega)
\end{equation*}
and hence~\eqref{equality_distribution} follows. 

Going further, suppose that $u\in \mathcal{D}(-\Delta_\alpha^\Omega),$ since we have proved that then $u\in \widetilde{\mathcal{D}},$ hence~\eqref{eq:Green_identity} holds. Using also~\eqref{equality_distribution} and compute
\begin{equation*}
	\begin{split}
		\int_{\Omega} \nabla u \cdot \overline{\nabla v}\, dx&=-\int_{\Omega} \Delta u \bar{v}\, dx - \int_{\partial \Omega} u_{x_n}\bar{v}\, d\sigma\\
		&=\int_\Omega -\Delta_\alpha^\Omega u \bar{v}\, dx -\int_{\partial \Omega} \alpha u \bar{v}\, d\sigma + \int_{\partial \Omega} (-u_{x_n} + \alpha u)\bar{v}\, d\sigma\\
		&=\int_{\Omega} \nabla u \cdot \overline{\nabla v}\, dx + \int_{\partial \Omega} (-u_{x_n} + \alpha u)\bar{v}\, d\sigma,
	\end{split}
\end{equation*} 
we get
\begin{equation*}
	\int_{\partial \Omega} (-u_{x_n} + \alpha u)\bar{v}\, d\sigma=0.
\end{equation*}
Since $v\in H^1(\Omega)$ is arbitrary, the map $\gamma_D\colon H^1(\Omega)\to H^{1/2}(\partial \Omega)$ is onto and via the density of $H^{1/2}(\partial \Omega)$ in $L^2(\partial \Omega),$ we conclude that $-u_{x_n} + \alpha u=0$ in $L^2(\partial \Omega).$ 

Thus, we have proved that
\begin{equation*}
	\mathcal{D}(-\Delta_\alpha^\Omega)\subseteq \big\{u\in H^{3/2}(\Omega)\ \big|\ \Delta u \in L^2(\Omega),\, -u_{x_n} +\alpha u=0\; \text{in}\; L^2(\partial \Omega) \big\}.
\end{equation*}

Next, assume $u\in \widetilde{\mathcal{D}}$ such that $ -u_{x_n} +\alpha u=0.$ Then from~\eqref{eq:Green_identity} one has
\begin{equation*}
	\begin{split}
	\int_{\Omega} \nabla u \cdot \overline{\nabla v}\, dx&=-\int_{\Omega} \Delta u \bar{v}\, dx -\int_{\partial \Omega} u_{x_n} \bar{v}\, d\sigma\\
	 &=-\int_{\Omega} \Delta u \bar{v}\, dx - \int_{\partial \Omega} \alpha u \bar{v}\, d\sigma.
	\end{split}
\end{equation*}
Hence $u\in \mathcal{D}(-\Delta_\alpha^\Omega)$ with $w_u:=-\Delta u$ and thus
\begin{equation*}
	\mathcal{D}(-\Delta_\alpha^\Omega) \supseteq \big\{u\in H^{3/2}(\Omega)\, \big|\, \Delta u\in L^2(\Omega),\, -u_{x_n} +\alpha u=0\; \text{in}\; L^2(\partial \Omega)\big\}. 
\end{equation*}
This concludes the proof of the theorem.			
\end{proof}


	\section{Method of multipliers: Proof of Lemma~\ref{lemma:5identities}}\label{Appendix:5_identities}
This part is concerned with the rigorous derivation of the identities
\eqref{eq:first_Omega_const}--\eqref{eq:second_Omega} in Lemma~\ref{lemma:5identities}. 
Let $u \in \widetilde{\mathcal{D}}_0$ solves~\eqref{eq:resolvent_gen}
and recall that $u \in H^1(\Omega)$ can be extended to a function 
(denoted by the same symbol) in $H^1(\R^n)$. 
The boundary-value problem~\eqref{eq:resolvent_gen} 
means that $u\in H^{3/2}(\R^n)$ such that $\Delta u\in L^2(\R^n)$ 
satisfies~\eqref{eq:weak_resolvent_gen} for any $v\in H^1(\R^n)$.
 
Choose in~\eqref{eq:weak_resolvent_gen} $v:=\varphi u,$ 
with $\varphi\colon \R^n \to \R,$ $\varphi\in W^{2,\infty}_\textup{loc}(\R^n)$  being a \emph{radial} function. Notice that since the support of~$u$ is compact, the hypotheses $\varphi, \nabla \varphi \in L^\infty_\textup{loc}(\R^n)$ guarantee that $v\in H^1(\R^n)$ and therefore $v$ is admissible as a test function in~\eqref{eq:weak_resolvent_gen}.
Using the Leibniz rule for weak derivatives in the first term of~\eqref{eq:weak_resolvent_gen} we get
		\begin{equation*}
			\int_{\Omega}  \varphi \abs{\nabla u}^2\, dx + \int_{\Omega} \bar{u} \nabla u \cdot \nabla \varphi\, dx + \int_{\partial \Omega} \alpha \varphi \abs{u}^2\, d\sigma
			= \lambda \int_{\Omega} \varphi \abs{u}^2\, dx + \int_{\Omega} {f} \varphi \bar{u}\, dx + \int_{\partial \Omega} {g} \varphi \bar{u}\, d\sigma. 
		\end{equation*}
		Taking the real part of the obtained identity gives 
		\begin{equation*}
		 \int_{\Omega} \varphi \abs{\nabla u}^2 \,dx
		 +\int_{\Omega} \Re(\bar{u}\nabla u)\cdot \nabla \varphi\, dx 
		 +  \int_{\partial \Omega} \Re \alpha \varphi \abs{u}^2\, d\sigma
		= \Re \lambda \int_{\Omega} \varphi \abs{u}^2\, dx + \Re \int_{\Omega} f \varphi \bar{u}\, dx + \Re \int_{\partial \Omega} g \varphi \bar{u}\, d\sigma.
	\end{equation*}	
	Consider the second term in the latter: using that 
	$
		\Re (\bar{u}\nabla u)=\frac{1}{2} \nabla \abs{u}^2
	$	
	and performing then an integration by parts, one gets
		\begin{multline*}
		-\frac{1}{2} \int_{\Omega} \Delta \varphi \abs{u}^2 \,dx + \int_{\Omega} \varphi \abs{\nabla u}^2 \,dx + \frac{1}{2} \int_{\partial \Omega} \abs{u}^2 \nabla \varphi \cdot \eta \,d\sigma +  \int_{\partial \Omega} \Re \alpha \varphi \abs{u}^2\, d\sigma\\
		= \Re \lambda \int_{\Omega} \varphi \abs{u}^2\, dx + \Re \int_{\Omega} f \varphi \bar{u}\, dx + \Re \int_{\partial \Omega} g \varphi \bar{u}\, d\sigma.
	\end{multline*}
	Observe that since $\varphi\in W^{2,\infty}_\textup{loc}(\R^n),$ in particular $\Delta \varphi\in L^\infty_\textup{loc}(\R^n),$ moreover, by Sobolev embeddings, $\nabla \varphi$ is a lipschitz function and thus the trace of $\nabla \varphi$ is everywhere defined on $\partial \Omega.$ These facts ensure that the integrals in the previous identity are well-defined and finite.
	
Recalling that~$\Omega$ is the upper half-space~\eqref{upper},
the outer normal~$\eta$ to the boundary satisfies $\eta=(0,0,\dots,0,-1)$.
Taking $\varphi:=1$ and $\varphi:=\abs{x}$, 
we get~\eqref{eq:first_Omega_const} and~\eqref{eq:first_Omega_x}, respectively. In passing, notice that for both the choices of $\varphi$ the boundary term $\int_{\partial \Omega}{ |u|^2 \nabla \varphi\cdot \eta \, d\sigma}$ vanishes, in particular when $\varphi=\abs{x}$ this is a consequence of the  validity of the orthogonality condition $x\cdot \eta=0$ on the boundary $\partial \Omega.$

Equation~\eqref{eq:first_Omega_im_const} and~\eqref{eq:first_Omega_im_x} 
are obtained similarly to the previous case: choose in~\eqref{eq:weak_resolvent_gen} 
$v:= \psi u,$ with $\psi\colon \R^n \to \R,$ $\psi \in W^{1,\infty}_\textup{loc}(\R^n)$ being 
a \emph{radial} function. Now, instead of taking the real part of the resulting identity, one takes the imaginary part obtaining
\begin{equation*}
	\Im \int_{\Omega} \bar{u}\nabla u\cdot \nabla \psi\, dx 
		 +  \int_{\partial \Omega} \Im \alpha \psi \abs{u}^2\, d\sigma
		= \Im \lambda \int_{\Omega} \psi \abs{u}^2\, dx + \Im \int_{\Omega} f \psi \bar{u}\, dx + \Im \int_{\partial \Omega} g \psi \bar{u}\, d\sigma.
\end{equation*}
Finally, we choose $\psi:=1$ and $\psi:=\abs{x}$, respectively.
 
The remaining identity~\eqref{eq:second_Omega} is formally obtained 
by plugging into~\eqref{eq:weak_resolvent_gen} the multiplier
\begin{equation}\label{Morawetz_multiplier}
	v:=[\Delta, \phi]u\\
		=\Delta \phi u + 2\nabla \phi \cdot \nabla u
  \qquad \mbox{with} \qquad
  \phi(x) := |x|^2,
\end{equation}
taking the real part and integrating by parts. However, such~$v$ does not need to belong to $H^1(\R^n)$;
in fact, the unboundedness of~$\phi$ does not pose any problems 
because the support of~$u$ is assumed to be compact,
but~$\nabla u$ does not necessarily belong to $H^1(\R^n)$ 
(unless we strengthen the hypothesis about~$\alpha$,
for instance to $\alpha \in W_\mathrm{loc}^{1,\infty}(\partial\Omega)$).

 For reader's convenience we provide now the \emph{formal} derivation of identity~\eqref{eq:second_Omega}. The rigorous proof will be given immediately below.  
Choosing $v=\Delta \phi u + 2 \nabla \phi \cdot \nabla u=\Delta \phi u + 2 \partial_k \phi \partial_k u$ in~\eqref{eq:weak_resolvent_gen} (notice that here the Einstein summation convention is adopted) and using the Leibniz rule for weak derivatives in the first term of~\eqref{eq:weak_resolvent_gen} we get
\begin{multline*}
	\int_\Omega \Delta \phi \abs{\nabla u}^2\, dx 
	+ \int_\Omega \nabla u \cdot \nabla \Delta \phi \bar{u}\, dx
	+2 \int_\Omega \nabla u \cdot (\nabla \partial_k \phi\, \partial_k \bar{u})\, dx
	+2 \int_\Omega \nabla u \cdot (\partial_k \phi\, \nabla \partial_k \bar{u})\, dx\\
	 + \int_{\partial \Omega} \alpha \Delta \phi \abs{u}^2\, d\sigma
	  +2 \int_{\partial \Omega} \alpha \partial_k \phi\, u\, \partial_k \bar{u}\, dx
	  = \lambda \int_\Omega \Delta \phi \abs{u}^2\, dx 
	  + 2\lambda \int_\Omega \partial_k \phi\, u\, \partial_k \bar{u}\, dx\\
	  + \int_\Omega f(\Delta \phi \bar{u} + 2 \nabla\phi\cdot \nabla \bar{u})\, dx 
	  + \int_{\partial \Omega} g(\Delta \phi \bar{u} + 2 \nabla\phi\cdot \nabla \bar{u})\, d\sigma. 
\end{multline*} 
Taking the real part of the resulting identity and using again that $\Re(\bar{u}\nabla u)=\frac{1}{2} \nabla \abs{u}^2$ one gets
\begin{multline*}
	\int_\Omega \Delta \phi \abs{\nabla u}^2\, dx 
	+\frac{1}{2} \int_\Omega  \nabla \Delta \phi \cdot \nabla \abs{u}^2\, dx
	+2 \int_\Omega \nabla u \cdot (\nabla \partial_k \phi\, \partial_k \bar{u})\, dx
	+ \int_\Omega \partial_k \abs{\nabla u}^2 \partial_k \phi\, dx\\
	 + \int_{\partial \Omega} \Re \alpha \Delta \phi \abs{u}^2\, d\sigma
	  +2 \Re \int_{\partial \Omega} \alpha \partial_k \phi\, u\, \partial_k \bar{u}\, dx\\
	  = \Re \lambda \int_\Omega \Delta \phi \abs{u}^2\, dx 
	  + \Re \lambda \int_\Omega \partial_k \phi\, \partial_k \abs{u}^2\, dx
	  -	2 \Im \lambda  \Im \int_\Omega \partial_k \phi\, u\, \partial_k \bar{u}\, dx\\
	  + \Re \int_\Omega f(\Delta \phi \bar{u} + 2 \nabla\phi\cdot \nabla \bar{u})\, dx 
	  +\Re \int_{\partial \Omega} g(\Delta \phi \bar{u} + 2 \nabla\phi\cdot \nabla \bar{u})\, d\sigma,
\end{multline*}
where in the part involving the eigenvalue $\lambda$ we have used that given $z_1, z_2 \in \C,$ then $\Re(z_1 z_2)= \Re z_1 \Re z_2 - \Im z_1 \Im z_2.$
 Integrating by parts the previous identity leads
\begin{multline*}
	- \frac{1}{2}\int_\Omega \Delta^2 \phi \abs{u}^2\, dx
	+ \frac{1}{2} \int_{\partial \Omega} \abs{u}^2 \nabla \Delta \phi \cdot \eta \, d\sigma
	+ 2\int_\Omega \nabla u\cdot \nabla^2 \phi \cdot \nabla \bar{u}\, dx
	+ \int_{\partial \Omega} \abs{\nabla u}^2 \nabla \phi \cdot \eta\, d\sigma\\
	+ \int_{\partial \Omega} \Re \alpha \Delta \phi \abs{u}^2\, d\sigma
	  +2 \Re \int_{\partial \Omega} \alpha \nabla \phi \cdot (u \nabla \bar{u})\, dx
	  =\Re \lambda \int_{\partial \Omega} \abs{u}^2 \nabla \phi\cdot \eta \, d\sigma
	  -	2 \Im \lambda  \Im \int_\Omega \nabla \phi \cdot (u \nabla \bar{u})\, dx\\
	  + \Re \int_\Omega f(\Delta \phi \bar{u} + 2 \nabla\phi\cdot \nabla \bar{u})\, dx 
	  +\Re \int_{\partial \Omega} g(\Delta \phi \bar{u} + 2 \nabla\phi\cdot \nabla \bar{u})\, d\sigma.
\end{multline*} 
Choosing now $\phi(x):=\abs{x}^2$ and using that
\begin{equation*}
	\nabla \phi= 2x,\qquad
	\nabla^2 \phi= 2\, \textup{Id},\qquad
	\Delta \phi=2n,\qquad 
	\nabla \Delta \phi=0,\qquad
	\Delta^2 \phi=0, 
\end{equation*}
one immediately gets~\eqref{eq:second_Omega} (after multiplication by $1/2$). Notice that also in this situation the boundary terms $\int_{\partial \Omega} \abs{\nabla u}^2 \nabla \phi \cdot \eta \, d\sigma$ and $\Re\lambda \int_{\partial \Omega} \abs{u}^2 \nabla \phi \cdot \eta\, d\sigma$ vanish as a consequence of the orthogonality condition $x\cdot \eta=0$ on the boundary $\partial \Omega.$ 
In view of the above, we stress that the choice of the multipliers in the previous identities represents a delicate step and it is strongly based on the geometry of the domain we are working in. One characterising feature is that our choice of $\varphi, \psi$ and $\phi$ introduces an orthogonality condition which allows us to discard possibly troublesome terms defined on the boundary of our domain. Nonetheless, one observes that such a condition easily generalises to more general sectors than the mere half-space.

In order to \emph{rigorously} prove~\eqref{eq:second_Omega}, our idea is to replace~\eqref{Morawetz_multiplier} by its regularised version
\begin{equation}\label{Morawetz_multiplier_reg}
v:=\Delta \phi u + \nabla \phi \cdot[\nabla^\delta u + \nabla ^{-\delta} u]
=\Delta \phi u + \partial_k \phi\, [\partial_k^\delta u + \partial_k^{-\delta}u],
\end{equation}
where $\delta \in \R\setminus\{0\}$,
$\nabla^\delta \psi :=(\partial_1^\delta \psi, \dots, \partial_n^\delta \psi)$ 
and 
\begin{equation*}
  \partial_k^\delta u(x)
  :=\frac{\tau_k^\delta u(x)-u(x)}{\delta}
  \qquad \mbox{with} \qquad
  \tau_k^\delta u(x) := u(x+ \delta e_k),
\end{equation*}
is the standard \emph{difference quotient} of~$u$
(it makes sense if we recall the convention that by $u \in H^1(\R^n)$
we understand the extension~$Eu$ of $u \in H^1(\Omega)$,
where~$E$ is the extension operator).	 
For a moment, we proceed in a greater generality by considering~$\phi$
to be an arbitrary smooth function $\phi:\R^n \to \R$.
%
%

We refer to \cite[Sec.~5.8.2]{Evans} or~\cite[Sec.~10.5]{Leoni}
for basic facts about the difference quotients.
Here we only point out the following important property,
which we did not find in these references
(however, \emph{cf}.~\cite[Thm.~10.55]{Leoni}).
For the proof, one can use the fundamental theorem of calculus
and the theorem of Lusin. 
\begin{proposition}\label{Prop.Lusin}
Let $\psi \in W^{1,p}(\R^n)$ with $1 \leq p < \infty$.
Then the strong convergence
\begin{equation*}
  \partial_k^{\delta}\psi
  \xrightarrow[\delta \to 0]{}
  \partial_k\psi
  \qquad \mbox{in} \qquad
  L^p(U)
\end{equation*}
holds true for every subdomain $U \subset\subset \R^n$.
\end{proposition}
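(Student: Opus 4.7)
The plan is to reduce the assertion to the continuity of translations in $L^p(\R^n)$, which in turn is established using Lusin's theorem together with uniform continuity on compacta. The starting point is the representation of the difference quotient via the fundamental theorem of calculus: I claim that for $\psi \in W^{1,p}(\R^n)$ and almost every $x \in \R^n$,
\begin{equation*}
  \partial_k^\delta\psi(x)
  = \frac{1}{\delta}\int_0^\delta \partial_k\psi(x+t e_k)\,dt
  = \int_0^1 \partial_k\psi(x+\delta s e_k)\,ds.
\end{equation*}
To justify this identity, I would either invoke the absolute-continuity-on-lines characterization of Sobolev functions (the map $t\mapsto \psi(x+te_k)$ is absolutely continuous for a.e.\ $x$ with weak derivative $\partial_k\psi(x+te_k)$), or approximate $\psi$ by a sequence in $C^\infty_c(\R^n)$ in the $W^{1,p}$ norm, for which the identity is trivial, and pass to the limit on both sides using standard bounds.

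With this representation at hand, Jensen's inequality (since $t\mapsto |t|^p$ is convex for $p\geq 1$) followed by Fubini's theorem gives
\begin{equation*}
  \|\partial_k^\delta\psi - \partial_k\psi\|_{L^p(U)}^p
  \leq \int_0^1 \|\tau_k^{\delta s}(\partial_k\psi) - \partial_k\psi\|_{L^p(\R^n)}^p\,ds.
\end{equation*}
The integrand is uniformly bounded by $(2\|\partial_k\psi\|_{L^p(\R^n)})^p$, so by the dominated convergence theorem applied in the variable~$s$, it suffices to prove that for every $g \in L^p(\R^n)$ the translations satisfy $\tau_k^h g \to g$ strongly in $L^p(\R^n)$ as $h\to 0$.

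To prove this continuity of translations, I would fix $\epsilon > 0$ and apply Lusin's theorem (together with the density of $C_c(\R^n)$ in $L^p(\R^n)$ for $1\leq p < \infty$) to choose $\tilde g \in C_c(\R^n)$ with $\|g - \tilde g\|_{L^p(\R^n)} \leq \epsilon$. Since translation is an isometry of $L^p(\R^n)$, the triangle inequality yields
\begin{equation*}
  \|\tau_k^h g - g\|_{L^p(\R^n)}
  \leq 2\|g - \tilde g\|_{L^p(\R^n)} + \|\tau_k^h \tilde g - \tilde g\|_{L^p(\R^n)}
  \leq 2\epsilon + \|\tau_k^h \tilde g - \tilde g\|_{L^p(\R^n)},
\end{equation*}
and the last term tends to zero as $h\to 0$ because $\tilde g$ is uniformly continuous and compactly supported. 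Letting $\epsilon\to 0$ concludes.

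The main obstacle is the FTC representation in the first step, since for a general Sobolev function one must pay attention to the choice of representative and justify the almost-everywhere identity carefully; the hypothesis $U \subset\subset \R^n$ plays no essential role beyond ensuring that $x+\delta e_k$ stays in the domain of~$\psi$ for small~$\delta$ (here automatic since $\psi$ lives on all of $\R^n$), so the $L^p$-convergence in fact holds on all of~$\R^n$.
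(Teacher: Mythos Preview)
Your proof is correct and follows precisely the route the paper indicates (which gives only the hint ``use the fundamental theorem of calculus and the theorem of Lusin'' without details): the FTC representation of the difference quotient combined with Jensen and Fubini reduces the claim to the $L^p$-continuity of translations, which you then establish via a Lusin/density argument. Your observation that the hypothesis $U\subset\subset\R^n$ is not essential here is also correct.
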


We plug~\eqref{Morawetz_multiplier_reg} into~\eqref{eq:weak_resolvent_gen} 
and take the real part. Below, for the sake of clarity, 
we consider each integral of the resulting identity separately.

\subsubsection*{$\bullet$ Kinetic term}
Let us start with the ``kinetic'' part of~\eqref{eq:weak_resolvent_gen}: 
\begin{equation*}
	K:=\Re \int_\Omega \nabla u \cdot \nabla \bar{v}\, dx.
\end{equation*}
Using
\begin{equation*}
	\partial_l \bar{v}= (\partial_l \Delta \phi) \bar{u} + \Delta \phi \partial_l \bar{u} + \partial_{lk} \phi\, [\partial_k^\delta \bar{u} + \partial_k^{-\delta}\bar{u}] + \partial_k \phi\, [\partial_l\partial_k^{\delta}\bar{u} + \partial_l \partial_k^{-\delta} \bar{u}],
\end{equation*}
we write $K=K_1+K_2+K_3+K_4$ with 
\begin{align*}
K_1&:=\Re \int_\Omega \partial_l u (\partial_l \Delta \phi) \bar{u}\, dx, 
&
K_2&:=\int_{\Omega} \abs{\nabla u}^2 \Delta \phi\, dx,
\\
K_3&:=\Re \int_{\Omega} \partial_{lk}\phi\, \partial_l u\, [\partial_k^\delta \bar{u} + \partial_k^{-\delta} \bar{u}]\, dx, 
& 
K_4&:=\Re \int_{\Omega} \partial_k \phi \, \partial_l u\, [\partial_k^\delta \partial_l \bar{u} + \partial_{k}^{-\delta}\partial_l \bar{u}]\, dx.
\end{align*}
Integrating by parts in $K_1$ gives
\begin{equation*}
	K_1=-\frac{1}{2}\int_{\Omega} \Delta^2 \phi \abs{u}^2\, dx + \frac{1}{2} \int_{\partial \Omega} \abs{u}^2 \nabla \Delta \phi \cdot \eta\, d\sigma.
\end{equation*}
Now we consider $K_4.$ 
Using the formula
\begin{equation}\label{real}
  2\Re(\bar{\psi} \partial_k^\delta \psi)
  =\partial_k^\delta \abs{\psi}^2 -\delta \abs{\partial_k^\delta \psi}^2
\end{equation}
valid for every $\psi:\R^n \to \C$,
we write $K_4=K_{4,1} + K_{4,2}$ with
(summation both over~$k$ and~$l$)
\begin{equation*}
	K_{4,1}:= \frac{1}{2} \int_{\Omega} \partial_k \phi \{\partial_k^\delta \abs{\partial_l u}^2 + \partial_k^{-\delta} \abs{\partial_l u}^2\}\, dx,\quad \text{and} \quad
	K_{4,2}:=-\frac{\delta}{2} \int_{\Omega} \partial_k \phi \{ \abs{\partial_k^\delta \partial_l u}^2 - \abs{\partial_k^{-\delta} \partial_l u}^2 \}\, dx.
\end{equation*}
It is well known that the \emph{integration-by-parts 
formula for difference quotients} (see \cite[Sec.~5.8.2]{Evans})
\begin{equation*}
  \int_{\R^n} \varphi \ \partial_k^{\delta}\psi \, dx
  = - \int_{\R^n} (\partial_k^{-\delta} \varphi) \ \psi \, dx
\end{equation*}
holds true for every $\varphi,\psi \in L^2(\R^n)$.
Here we use the following variant in the half-space
\begin{equation}\label{int_by_parts}
\int_{\Omega} \varphi \partial_k^\delta \psi\, dx
=- \int_{\Omega} (\partial_k^{-\delta} \varphi) \psi\, dx 
-\delta_{k,n} \frac{1}{\delta} \int_0^{\delta} \int_{\R^{n-1}} 
(\tau_k^{-\delta} \varphi) \psi\, dx,
\end{equation}
where $\varphi,\psi \in L^2(\Omega)$
and $\delta_{k,n}$ denotes the Kronecker symbol.
Consequently,	 
\begin{multline*}
	K_{4,1}=-\frac{1}{2}\int_{\Omega} \{\partial_k^{-\delta}\partial_k \phi + \partial_k^{\delta}\partial_k \phi \} \abs{\nabla u}^2\, dx\\
	-\delta_{k,n}\frac{1}{2\delta} \int_0^\delta \int_{\R^{n-1}} (\tau_k^{-\delta}\partial_k \phi)\abs{\partial_l u}^2\, dx
	-\delta_{k,n}\frac{1}{2\delta} \int_{-\delta}^0 \int_{\R^{n-1}} (\tau_k^{\delta}\partial_k \phi)\abs{\partial_l u}^2\, dx.  
\end{multline*}
At the same time,
making explicit the difference quotient and changing variable in $K_{4,2}$ gives
(summation both over~$k$ and~$l$)
\begin{equation*}
	K_{4,2}=-\frac{\delta}{2} \int_{\Omega} \{\partial_k \phi - (\tau_{k}^\delta \partial_k \phi)\} \abs{\partial_k^\delta \partial_l u}^2\, dx + \delta_{k,n} \frac{\delta}{2}\int_{-\delta}^0 \int_{\R^{n-1}} (\tau_k^\delta \partial_k \phi)\abs{\partial_k^\delta \partial_l u}^2\, d\sigma. 
\end{equation*}
Now we choose the multiplier $\phi(x):=\abs{x}^2$ and observe that
\begin{equation}\label{eq:phi_derivatives}
	\partial_k \phi=2x_k, \qquad \partial_{lk} \phi=2\delta_{k,l}, \qquad \partial_k^{\pm\delta}\partial_k\phi=2,\qquad \nabla \Delta \phi=0, \qquad \Delta^2\phi=0.
\end{equation}
Consequently,
\begin{equation*}
	K_1=0,\qquad
	K_2=2n \int_{\Omega} \abs{\nabla u}^2\, dx,\qquad
	K_3=2 \Re \int_{\Omega} \partial_l u\,[\partial_l^\delta \bar{u} + \partial_l^{-\delta}\bar{u}]\, dx,
\end{equation*}
and
\begin{multline*}
K_{4}=-2n\int_{\Omega} \abs{\nabla u}^2\, dx-\frac{1}{\delta} \int_{-\delta}^\delta \int_{\R^{n-1}} x_n \abs{\nabla u}^2\, dx +\int_0^\delta \int_{\R^{n-1}} \abs{\nabla u}^2\, dx - \int_{-\delta}^0 \int_{\R^{n-1}} \abs{\nabla u}^2\, dx
\\ +\int_{\Omega} \abs{\tau_k^\delta \nabla u -\nabla u}^2\, dx + \frac{1}{\delta}\int_{-\delta}^0 \int_{\R^{n-1}} (x_n + \delta)\abs{\tau_n^\delta \nabla u - \nabla u}^2\, dx.
\end{multline*}
Using the absolutely continuity of Lebesgue integral and the $L^2$-continuity of translation, one arrives at
\begin{equation}\label{kinetic_approx}
	K= 2\Re \int_{\Omega} \partial_l u\, [\partial_l^\delta \bar{u} + \partial_l^{-\delta} \bar{u}] + \varepsilon(\delta),
\end{equation}
where $\varepsilon(\delta) \to 0$ as $\delta \to 0$.

\subsubsection*{$\bullet$ Source term}
Let us now consider simultaneously
the ``source'' and ``eigenvalue''
parts of~\eqref{eq:weak_resolvent_gen}, that is,
\begin{equation*}
	F:=\Re \left( \lambda \int_{\Omega} u \bar{v}\, dx  + \int_\Omega f \bar{v}\, dx \right).
\end{equation*}
This can be written as $F=F_1+F_2+F_3+F_4$ with
\begin{align*}
	F_1&:=\Re\lambda \int_{\Omega} \Delta \phi \abs{u}^2\, dx,
&
	F_2&:=\Re \lambda \Re \int_\Omega \partial_k \phi \, u [\partial_k^\delta \bar{u} + \partial_k^{-\delta} \bar{u}]\, dx,
\\
	F_3&:=-\Im \lambda \Im \int_\Omega \partial_k \phi \, u [\partial_k^\delta \bar{u} + \partial_k^{-\delta} \bar{u}]\, dx ,
&
	F_4&:= \Re \int_\Omega f\{\Delta \phi \bar{u} + \partial_k \phi \, [\partial_k^\delta \bar{u} + \partial_k^{-\delta} \bar{u}]\}\, dx.
\end{align*}
Applying~\eqref{real}, we further split $F_2=F_{2,1} +F_{2,2}$, where 
\begin{equation*}
	F_{2,1}:=\frac{1}{2} \Re \lambda \int_{\Omega} \partial_k \phi\, \{\partial_k^\delta \abs{u}^2 + \partial_k^{-\delta} \abs{u}^2 \}\, dx\quad \text{and}\quad
	F_{2,2}:=-\frac{\delta}{2} \Re \lambda \int_{\Omega} \partial_k \phi\, \{ \abs{\partial_k^\delta u}^2 - \abs{\partial_k^{-\delta} u}^2 \}\, dx.
\end{equation*}
Using the integrating-by-parts formula~\eqref{int_by_parts}, we get
\begin{multline*}
	F_{2,1}=-\frac{1}{2}\Re\lambda \int_{\Omega} \{\partial_k^{-\delta} \partial_k \phi + \partial_k^{\delta} \partial_k \phi\}\abs{u}^2\, dx\\
	-\delta_{k,n} \frac{1}{2\delta} \Re \lambda \int_0^\delta \int_{\R^{n-1}} (\tau_k^{-\delta}\partial_k\phi)\abs{u}^2\, dx -\delta_{k,n} \frac{1}{2\delta} \Re \lambda \int_{-\delta}^0 \int_{\R^{n-1}} (\tau_k^\delta \partial_k \phi) \abs{u}^2\, dx.
\end{multline*}
Choosing $\phi(x):=\abs{x}^2$ 
in the previous identities and using~\eqref{eq:phi_derivatives} gives
\begin{align*}
	& F_1=2n \Re\lambda \int_{\Omega} \abs{u}^2\, dx\\
	& 
	\begin{aligned}
		F_2=&-2n \Re \lambda \int_{\Omega} \abs{u}^2\, dx -\frac{1}{\delta} \int_{-\delta}^\delta \int_{\R^{n-1}} x_n \abs{u}^2\, dx +\int_0^\delta \int_{\R^{n-1}} \abs{u}^2\, dx -\int_{-\delta}^0 \int_{\R^{n-1}} \abs{u}^2\, dx\\
  			& 2\delta \Re \lambda \int_\Omega x_k \{\abs{\partial_k^\delta u}^2 - \abs{\partial_k^{-\delta} u}^2\}\, dx, 
	\end{aligned}\\
	&F_3=-2\Im \lambda \Im \int_\Omega x_k u\, [\partial_k^\delta \bar{u} + \partial_k^{-\delta} \bar{u}]\, dx,\\
	&F_4= \Re \int_{\Omega} f \{2n \bar{u} +2 x_k [\partial_k^\delta \bar{u} + \partial_k^{-\delta} \bar{u}]\}\, dx.
\end{align*}
Using again the absolutely continuity of the Lebesgue integral 
and the strong $L^2$-convergence of the difference quotients
(see Proposition~\ref{Prop.Lusin}), one has
\begin{equation}\label{eigen-source_approx}
	F=-2\Im \lambda \Im \int_\Omega x_k u\, [\partial_k^\delta \bar{u} + \partial_k^{-\delta} \bar{u}]\,dx +  \Re \int_{\Omega} f \{2n \bar{u} +2 x_k [\partial_k^\delta \bar{u} + \partial_k^{-\delta} \bar{u}]\}\,dx+ \varepsilon(\delta),
\end{equation}
where $\varepsilon(\delta) \to 0$ as $\delta \to 0$.

\subsubsection*{$\bullet$ Boundary-potential term}
Let us now consider the contribution 
of the ``potential'' part of~\eqref{eq:weak_resolvent_gen}, that is, 
\begin{equation*}
	J:=\Re \int_{\partial \Omega} \alpha u \bar{v}\, d\sigma.
\end{equation*}
This can be written as $J=J_1+J_2$ with
\begin{equation*}
	J_1:= \int_{\partial \Omega} \Re \alpha \Delta \phi \abs{u}^2\, d\sigma\qquad \text{and} \qquad
	J_2:= \Re \int_{\partial \Omega} \alpha\, \partial_k\phi\, u\,[\partial_k^\delta \bar{u} + \partial_k^{-\delta}\bar{u}]\, d\sigma.
\end{equation*}
Choosing $\phi(x):=\abs{x}^2$ in the previous identities and using~\eqref{eq:phi_derivatives} we get
\begin{equation*}
	J_1=2n\int_{\partial \Omega} \Re\alpha \abs{u}^2\, d\sigma\qquad \text{and} \qquad
	J_2=2 \Re \int_{\partial \Omega} \alpha \, x_k\,  u\, [\partial_k^\delta \bar{u} + \partial_k^{-\delta}\bar{u}]\, d\sigma.
\end{equation*}
Hence
\begin{equation}\label{potential_approx}
	J=2n\int_{\partial \Omega} \Re\alpha \abs{u}^2\, d\sigma + 2\Re \int_{\partial \Omega} \alpha \, x_k\,  u\, [\partial_k^\delta \bar{u} + \partial_k^{-\delta}\bar{u}]\, d\sigma.
\end{equation}

\subsubsection*{$\bullet$ Boundary-source term}
Let us conclude by considering the ``boundary-source'' part
of~\eqref{eq:weak_resolvent_gen}, that is, 
\begin{equation*}
	G:=\Re \int_{\partial \Omega} g\bar{v}\, d\sigma.
\end{equation*}
With the choice $\phi(x):=\abs{x}^2$ 
and using~\eqref{eq:phi_derivatives}, 
we have
\begin{equation}\label{boundary_source_approx}
	G= \Re \int_{\partial \Omega} g\{2n \bar{u} + 2 x_k \, [\partial_k^\delta \bar{u} + \partial_k^{-\delta} \bar{u}]\}\, d\sigma.
\end{equation}

\subsubsection*{$\bullet$ Passing to the limit $\delta \to 0$}
Now since $u\in H^{3/2}(\R^n)$ is such that $\Delta u \in L^2(\R^n),$ using the continuity of the trace operator which holds true under these assumptions (see Lemma~\ref{lemma:Neumann_3/2epsilon}) 
and the strong $L^2$-convergence of difference quotients 
(see Proposition~\ref{Prop.Lusin}),
one has
\begin{align*}
	&\text{~\eqref{kinetic_approx}} \xrightarrow[\delta \to 0]{} 4 \int_\Omega \abs{\nabla u}^2\, dx,\\
	&\text{~\eqref{eigen-source_approx}} \xrightarrow[\delta \to 0]{}-4\Im \lambda \Im \int_\Omega x\cdot (u \nabla \bar{u})\, dx + 2n \Re \int_\Omega f \bar{u}\, dx + 4 \Re \int_\Omega f x\cdot \nabla \bar{u}\, dx,\\
	&~\eqref{potential_approx} \xrightarrow[\delta \to 0]{} 2n \int_{\partial \Omega} \Re\alpha \abs{u}^2\, d\sigma + 4 \Re \int_{\partial \Omega} \alpha\, x\cdot (u \nabla \bar{u})\, d\sigma,\\
	&~\eqref{boundary_source_approx} \xrightarrow[\delta \to 0]{} 2n \Re \int_{\partial \Omega} g \bar{u}\, d\sigma + 4 \Re \int_{\partial \Omega} g\, x \cdot \nabla \bar{u}\, d\sigma. 
\end{align*}
Therefore, passing to the limit $\delta \to 0$
in~\eqref{eq:weak_resolvent_gen} 
and multiplying the resulting identity by $1/2$,
one obtains~\eqref{eq:second_Omega}.

\medskip
This concludes the proof of Lemma~\ref{lemma:5identities}.
\qed

\subsection*{Acknowledgment}
%
We are grateful to Luca Fanelli, Rupert Frank and Luis Vega for useful discussions.
The research of D.K.\ was partially supported 
by the GACR grant No.\ 18-08835S.

%
%
	
\providecommand{\bysame}{\leavevmode\hbox to3em{\hrulefill}\thinspace}
\providecommand{\MR}{\relax\ifhmode\unskip\space\fi MR }
\providecommand{\MRhref}[2]{%
  \href{http://www.ams.org/mathscinet-getitem?mr=#1}{#2}
}
\providecommand{\href}[2]{#2}

\end{document}